\documentclass[a4paper, 11pt]{article}

\usepackage{amsmath,amssymb,amsthm}
\usepackage[margin=1.45in]{geometry}
\usepackage{url}
\usepackage{comment}
\usepackage{xcolor}

\usepackage[
        hypertexnames=false,
        hyperindex,
        breaklinks=true,
        bookmarks=false,
        colorlinks,
        linkcolor=blue,
        citecolor=purple,
        urlcolor=purple,
]{hyperref}

\newtheorem{theorem}{Theorem}[section]
\newtheorem{proposition}[theorem]{Proposition}
\newtheorem{corollary}[theorem]{Corollary}
\newtheorem{lemma}[theorem]{Lemma}
\theoremstyle{definition}
\newtheorem{definition}[theorem]{Definition}
\newtheorem{remark}[theorem]{Remark}
\theoremstyle{plain}
\newtheorem{conjecture}[theorem]{Conjecture}
\newtheorem{fact}[theorem]{Fact}

\newcommand{\F}{\mathbb{F}}
\newcommand{\Q}{\mathbb{Q}}
\newcommand{\Ql}{\mathbb{Q}_\ell}
\newcommand{\Z}{\mathbb{Z}}

\newcommand{\PGU}{\mathrm{PGU}}
\newcommand{\GU}{\mathrm{GU}}

\newcommand{\Herm}{\mathcal H}
\newcommand{\Frob}{\mathrm{Fr}}
\newcommand{\Wst}{W_{r}}
\newcommand{\Ur}{U_{r}}
\newcommand{\cupp}{\mathbin{\smallsmile}}
\newcommand{\capp}{\mathbin{\smallfrown}}
\newcommand{\chern}{\mathrm{c}}
\newcommand{\disc}{\operatorname{disc}}
\newcommand{\WO}{\mathcal O}

\title{An explicit form of the standard conjecture of Hodge type for Hermitian varieties, with $p$-adic applications}
\author{Shushi Harashita\thanks{Faculty of Environment and
Information Sciences, Yokohama National University,
79-7 Tokiwadai, Hodogaya-ku, Yokohama 240-8501, Japan.
Email: \texttt{harashita-shushi-nr@ynu.ac.jp}}}
\date{}

\begin{document}
\maketitle

\begin{abstract}
Let $X$ be a nonsingular Hermitian variety of even dimension over a finite field,
and let $\Lambda$ be the lattice spanned by the classes of its maximal totally
isotropic linear subspaces defined over the base field. By work of Dummigan, Dummigan--Tiep and Shimada,
these classes span the middle cohomology, and the intersection form on the
primitive part is definite, as predicted by the standard conjecture of Hodge
type. We give an elementary proof, which avoids the representation theory of
finite unitary groups and uses only the dual polar graph attached to the Hermitian form. It shows that the intersection form on the primitive part is an explicit
positive multiple of a Euclidean inner product.

The advantage of the new proof is that it also controls $\Lambda$ $p$-adically. If
$X$ is defined over $\F_{q^2}$ and has dimension $2m$, every $p$-adic elementary
divisor of the intersection matrix of these subspaces divides $q^m$. Consequently
$q^m$ times every algebraic cycle of codimension $m$ is numerically equivalent to an
integral combination of them. As a by-product, the standard conjecture of Hodge
type holds for varieties finitely covered by Hermitian varieties, and the
$p$-adic bound descends along finite morphisms of degree prime to $p$.
\end{abstract}

\section{Introduction}
\label{sec:intro}

Let $Y$ be a smooth projective variety of dimension $n$ over an algebraically
closed field, $\ell$ a prime different from the characteristic, $L$ the class
of an ample divisor, and $\mathcal A^i(Y)\subset H^{2i}(Y,\Ql(i))$ the $\Q$-span of
the classes of codimension-$i$ algebraic cycles. Grothendieck's \emph{standard
conjecture of Hodge type} $\mathrm{Hdg}(Y)$ asserts that for $2i\le n$ the form
$(x,y)\mapsto(-1)^i\deg(x\cupp y\cupp L^{n-2i})$ is positive definite on the
primitive part of $\mathcal A^i(Y)$ (\S\ref{subsec:hodge-survey}). Over $\mathbb C$
this follows from Hodge theory. In characteristic $p$ it is open in general. It
is known for surfaces \cite[Thm.~V.1.9]{Hartshorne}, for abelian fourfolds
\cite{Ancona}, for self-products of K3 surfaces \cite{IIK}, and for the
configurations of linear cycles studied by Ito \cite{Ito}.

For the Hermitian varieties $\Herm(M,q^2)\colon\sum_{i=0}^Mx_i^{q+1}=0$ the
conjecture is known. In odd dimension it is trivial
(Proposition~\ref{prop:belowmiddle}(i)). In even dimension it was observed by
Dummigan \cite{Dummigan1997} and proved in full by Dummigan and Tiep
\cite[\S8]{DT1999}. The finite unitary group $\mathrm U_{M+1}(q)$ of the Hermitian
form acts on $\Herm(M,q^2)$. For $M$ odd, they used that the primitive part of the
middle cohomology is irreducible under this group, a fact stated by Tate and
Thompson \cite[p.~102]{Tate1965}; it is a constituent of the Weil representation
\cite{DT1999}, and irreducible even over $\mathbb R$.
Therefore the invariant symmetric bilinear forms on it are unique up to a scalar
\cite[Prop.~2.1]{Gross1990}, so the intersection form is definite; its sign was determined with
explicit linear cycles. Shimada \cite[Thm.~1.1, Cor.~1.2]{Shimada2001}
gave another proof by computing the lattice of generators. He proved that
their classes span the middle cohomology (recovering the supersingularity due to
Tate \cite{Tate1965} and Shioda--Katsura \cite{SK1979}), that the intersection form
has the predicted signature, and that the discriminant is a power of $p$.

The first part of this paper gives an \emph{elementary proof} of the spanning and
of the conjecture (Theorems~\ref{thm:main-intro} and~\ref{thm:tate-intro}). It
uses neither the classification of characters of finite unitary groups nor
Deligne--Lusztig theory, only the dual polar graph. The second part uses the same structure to study the lattice of
generators $p$-adically (\S\ref{sec:gentrace}). Some of the facts
we use are already in \cite{Shimada2001}. The intersection numbers of generators
are \cite[(4.1)]{Shimada2001}; we include their derivation for the reader's
convenience. For the counts of generators and the Betti numbers we give
references.

Before stating the result we fix the conventions used throughout.
Here $p$ is a prime, $q=p^{\nu}$ a power of it, $r\ge2$ an integer,
$K=\overline{\F}_p$, and $\ell\ne p$ an auxiliary prime. From
\S\ref{sec:setup} to \S\ref{sec:hodge} we write
$X=\Herm(2r-1,q^{2})$; other varieties occur only in
\S\ref{sec:fermat}. We write
$d:=q+1$ and
\[
  N_s:=\prod_{i=1}^{s}(q^{2i-1}+1)\quad (N_0=1).
\]
Cohomology means $\ell$-adic \'etale cohomology of the base change to
$K$; we abbreviate $H^{m}(X,\Ql):=H^{m}(X_K,\Ql)$, always displaying
the coefficients, and insert Tate twists where they matter. Dimensions of projective linear
subspaces are projective unless a subscript indicates otherwise: for a
projective subspace $P\subset\mathbb P^{2r-1}$ we write $\widehat P\subset
K^{2r}$ for its cone, the corresponding linear subspace of the ambient
$2r$-dimensional $K$-vector space, so that $\dim_K\widehat P=\dim P+1$;
if $P$ is $\F_{q^2}$-rational we write $\widetilde
P\subset\F_{q^2}^{\,2r}$ for the corresponding $\F_{q^2}$-subspace, so
that $\widehat P=\widetilde P\otimes_{\F_{q^2}}K$ and
$\dim_{\F_{q^2}}\widetilde P=\dim_K\widehat P$. By
convention $\dim\emptyset=-1$.

Since $\dim X=2r-2$ is even, the
middle cohomology is $H^{2r-2}(X_{\overline\F_p},\Ql(r-1))$; write
$\langle\,,\,\rangle$ for the Poincar\'e (cup-product) pairing on it,
$L$ for the hyperplane class, and
\[
  \mathcal P^{2r-2}(X)\;:=\;\bigl(L^{r-1}\bigr)^{\perp}
  \;=\;\ker\Bigl(L:H^{2r-2}\bigl(X,\Ql(r-1)\bigr)\to H^{2r}\bigl(X,\Ql(r)\bigr)\Bigr)
\]
for the primitive part.

Next we describe the cycles that will span this
cohomology. Let $\bar x:=x^{q}$ denote the nontrivial automorphism of
$\F_{q^{2}}/\F_{q}$ and let
\begin{equation}
\label{eq:hermform-intro}
  \langle x,y\rangle_{\mathrm H}\;:=\;\sum_{i=0}^{2r-1}x_i\,\bar y_i
  \qquad\bigl(x,y\in\F_{q^{2}}^{\,2r}\bigr)
\end{equation}
be the standard nondegenerate Hermitian form relative to it. Since
$\langle x,x\rangle_{\mathrm H}=\sum_i x_i^{\,q+1}$, the variety $X$ is
exactly the locus of isotropic points of
$\langle\,,\,\rangle_{\mathrm H}$ in $\mathbb P^{2r-1}$. A totally
isotropic $\F_{q^{2}}$-subspace of $\F_{q^{2}}^{\,2r}$ has dimension at
most $r$, and a \emph{generator} of $X$ is an $\F_{q^2}$-rational projective linear subspace
$P\subset\mathbb P^{2r-1}$ whose $\F_{q^{2}}$-subspace $\widetilde P$ is a totally
isotropic subspace of $\F_{q^{2}}^{\,2r}$ of the maximal dimension $r$; thus
$P\cong\mathbb P^{r-1}$ is an $\F_{q^{2}}$-rational linear subvariety of
$X$ of exactly half its dimension. We write $\mathcal G$ for the set of
generators (Definition~\ref{def:generator}).
It is a finite set, of cardinality $N_r$
(Proposition~\ref{prop:generators}), and every $P\in
\mathcal G$ has a cycle class $[P]$ in the middle cohomology. The span
of these classes, and its primitive part
\[
  \mathcal P^{2r-2}(X)_{\Q}\;:=\;\mathcal P^{2r-2}(X)\,\cap\,
  \mathrm{span}_{\Q}\bigl\{[P]:P\in\mathcal G\bigr\},
\]
are $\Q$-vector spaces on which $\langle\,,\,\rangle$ takes rational
values, so that it makes sense to ask whether a form is definite
there. In fact the generators already account for all algebraic
cycles in this degree, so that
$\mathcal P^{2r-2}(X)_{\Q}=\mathcal A^{r-1}_{\mathrm{prim}}(X)$
\textup{(}Proposition~\ref{lem:Qstructure}\textup{)}\textup{;} the
positivity statement in Theorem~\ref{thm:main-intro} below is
therefore exactly $\mathrm{Hdg}(X)$ in the sense of
Conjecture~\ref{conj:hdg}. Finally, let $\Q^{\mathcal G}$ be the euclidean space with
orthonormal basis $\{e_P\}_{P\in\mathcal G}$, that is, with the inner
product
\[
  \bigl\langle u,u'\bigr\rangle_{\mathrm{std}}
  \;:=\;\sum_{P\in\mathcal G}u_P\,u'_P,
  \qquad u=\sum_P u_Pe_P,\quad u'=\sum_Pu'_Pe_P .
\]
(We reserve $\langle\,,\,\rangle$, without subscript, for the
Poincar\'e pairing of \eqref{eq:pairingdef} below, and write
$\langle\,,\,\rangle_{\mathrm{std}}$ for this standard euclidean
inner product on $\Q^{\mathcal G}$ or $\Ql^{\mathcal G}$, to avoid any
clash with the notation $(u,v)$ for an ordered pair.)

\begin{theorem}[Main theorem; proved in \S\ref{sec:hodge}]
\label{thm:main-intro}
Let $r\ge2$, let $q$ be any prime power and set $X=\Herm(2r-1,q^2)$,
the Hermitian variety of even dimension $2r-2$ \textup{(}the
odd-dimensional Hermitian varieties $\Herm(2r,q^2)$ being covered by
Proposition~\textup{\ref{prop:belowmiddle}(i)}\textup{)}.
Then the standard conjecture of Hodge type holds for $X$\textup{:} the
symmetric bilinear form
\[
  (x,y)\;\longmapsto\;(-1)^{r-1}\langle x,y\rangle
\]
is positive definite on $\mathcal P^{2r-2}(X)_{\Q}$.

In fact more is true: the form is, up to one constant factor, a
euclidean inner product. Precisely, there are a linear subspace
$\Ur\subseteq\Q^{\mathcal G}$ and an isomorphism of $\Q$-vector
spaces
\[
  \psi\;:\;\Ur\;\xrightarrow{\ \sim\ }\;\mathcal P^{2r-2}(X)_{\Q}
\]
such that
\[
  (-1)^{r-1}\bigl\langle\psi(u),\psi(u')\bigr\rangle
  \;=\;q^{\,r-1}N_{r-1}\cdot\bigl\langle u,u'\bigr\rangle_{\mathrm{std}}
  \qquad\text{for all }u,u'\in\Ur .
\]
Here $\Ur$ is an eigenspace of an explicit integer
matrix, constructed in \S\textup{\ref{sec:decomp}}.
\end{theorem}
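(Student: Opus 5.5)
The plan is to reduce everything to linear algebra on the Gram matrix $G=(\langle [P],[Q]\rangle)_{P,Q\in\mathcal G}$, using the dual polar graph on $\mathcal G$. First I compute the intersection numbers. If $P,Q$ are generators with $\dim_{\F_{q^2}}(\widetilde P\cap\widetilde Q)=r-k$, then $\langle[P],[Q]\rangle$ depends only on $k$. For $k=0$ it is the self-intersection $\deg c_{r-1}(N_{P/X})$. For $k\ge1$ it is either $0$ (when $P\cap Q$ has excess dimension, which forces the intersection to be computed through the excess bundle) or an explicit signed power of $q$. This is Shimada's formula \cite[(4.1)]{Shimada2001}. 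I would derive it from the normal bundle sequence $0\to N_{P/X}\to N_{P/\mathbb P}\to \mathcal O_P(q+1)|\ldots$, together with the identification $N_{P/X}\cong \Omega_P(q+1)$-type twist coming from the Frobenius-twisted Hermitian pairing, and then apply the excess intersection formula on $P\cap Q$.

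The upshot should be that $G=\sum_k c_k A_k$, where the $A_k$ are the distance matrices of the dual polar graph. This graph is distance-regular, so its Bose–Mesner algebra is commutative and $\Q^{\mathcal G}$ splits into common eigenspaces $V_0\oplus V_1\oplus\cdots\oplus V_r$ of $A_1$, with $V_0$ spanned by the all-ones vector. I take $\Ur$ to be a specific eigenspace of $A_1$; I expect $V_1$, of dimension equal to the primitive Betti number. The eigenvalues of $A_1$ are the standard $q$-analogue values. On $V_j$, $G$ acts by the scalar $\lambda_j=\sum_k c_k P_k(j)$, where $P_k(j)$ are the eigenvalues of the distance matrices.

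The core computation is to show $\lambda_j=0$ for $j\ge2$, that $\lambda_1=(-1)^{r-1}q^{r-1}N_{r-1}$, and that $\lambda_0$ is the value forced by $\langle L^{r-1},[P]\rangle=1$. I expect this to be the main obstacle. Rather than summing over $k$ with the $q$-Krawtchouk-type polynomials, I would compute in the intersection algebra. The classes $[P]$ are pulled back along the incidence structure with $\F_{q^2}$-points, which suggests writing $G=M^{T}DM$, where $M$ is the point–generator incidence matrix (or the incidence with $(r-2)$-dimensional isotropic flats). Since $M^TM$ lies in the span of $I,A_1$ on the relevant pieces, $G$ kills every $V_j$ with $j\ge2$. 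The scalar on $V_1$ is then read off from one evaluation, namely the trace of $G$ restricted to $V_1$, using $\lambda_0\dim V_0+\lambda_1\dim V_1=\operatorname{tr}G=N_r\,c_0$.

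Finally, the map $\Q^{\mathcal G}\to H^{2r-2}$, $e_P\mapsto[P]$, has kernel $\ker G$ by nondegeneracy of Poincaré duality on the image. This uses the spanning statement: the classes span a subspace on which the pairing is nondegenerate, which in turn follows from $\dim(V_0\oplus V_1)=b_{2r-2}$ and the fact that the image's rank equals $\operatorname{rank}G$. So the image is $\psi(V_0)\oplus\psi(V_1)$, with $\psi(V_0)$ spanned by $L^{r-1}$ and $\psi(V_1)$ orthogonal to it, i.e. $\psi(V_1)=\mathcal P^{2r-2}(X)_\Q$. Taking $\psi=\iota|_{\Ur}$ gives $\langle\psi u,\psi u'\rangle=u^TGu'=\lambda_1\langle u,u'\rangle_{\mathrm{std}}$, which is the claimed identity. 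Positive definiteness is immediate from it.
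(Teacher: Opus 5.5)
Your final step (the rank sandwich $\operatorname{rank}G\le\dim V\le\beta_{2r-2}$, plus $\langle[P],L^{r-1}\rangle=1$ to push the non-trivial eigenspace into the primitive part) matches the paper. The core step, showing that $G$ lives on only two eigenspaces of $A_1$, fails as you set it up.

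First, the intersection numbers are misstated. For $j=\dim(P\cap Q)\ge0$ the excess formula gives $P\cdot Q=\sum_{i=0}^{j}(-q)^i=(1-(-q)^{j+1})/(q+1)$, for example $1-q$ when $P\cap Q$ is a line. This is never $0$; the value $0$ occurs only when $P\cap Q=\emptyset$.

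Second, the eigenspace carrying the primitive cohomology is not $V_1$, the eigenspace of the second-largest eigenvalue $\theta_1=q\frac{q^{2r-2}-1}{q^2-1}-1$. It is $W_r=\ker(A+c_r)$, the eigenspace of the \emph{smallest} eigenvalue $-c_r$. Already for $r=2$, $q=2$ (the $27$ lines on the Fermat cubic surface), $\theta_1=1$ has multiplicity $20$, while $-c_2=-5$ has multiplicity $6=\beta_2-1$. So ``$\lambda_j=0$ for $j\ge2$'' is false; it is $\lambda_1,\dots,\lambda_{r-1}$ that vanish.

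Third, and decisively, no factorization $G=NDN^{\mathsf T}$ through the incidence matrices you propose can hold.
\begin{itemize}
\item For the generator--$(r-2)$-flat incidence matrix $C$ one has $CC^{\mathsf T}=A+c_rI$, so $C^{\mathsf T}$ annihilates $W_r$.
\item The indicator $\chi_x$ of the generators through an isotropic point $x$ satisfies $A\chi_x=\theta_1\chi_x+\mathbf 1$. So point incidences lie in $W_0\oplus\ker(A-\theta_1)$, which is orthogonal to $W_r$.
\end{itemize}
Either factorization would force $G|_{W_r}=0$. It would kill exactly the subspace that must map isomorphically onto $\mathcal P^{2r-2}(X)$.

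The missing idea is the one that exhibits the support directly. Since $\sum_jA_j=J$, the correct values give $G=\frac1{q+1}(J-M')$ with $M'_{P,Q}=(-q)^{\dim\widetilde P\cap\widetilde Q}$, that is, $M'=\sum_j(-q)^{r-j}A_j$. The three-term recurrence then shows $AM'=-c_rM'$: the coefficient of $A_m$ in $AM'$ is $(-q)^{r-m}\bigl(-\tfrac1q b_m+a_m-qc_m\bigr)$, and with $a_m=(q-1)c_m$ the bracket equals $-c_r$ for every $m$.

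Hence $M'$ is $\eta_r$ times the orthogonal projection onto $W_r$. The scalar $\eta_r=(-1)^rq^{r-1}(q+1)N_{r-1}$ comes from one diagonal entry of $M'^2=\eta_rM'$. The trace then gives $\dim W_r=q(q^{2r-1}+1)/(q+1)$; your trace step instead needs $\dim V_1$ imported from the literature.

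With this in place, the rest of your last paragraph goes through as in the paper. You also need the check that $\psi$ carries the rational points $W_r\cap\Q^{\mathcal G}$ onto $\mathcal P^{2r-2}(X)_\Q$.
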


Positivity is \cite[\S8]{DT1999} and \cite[Cor.~1.2]{Shimada2001}. That the form is
proportional to a Euclidean one also follows from the irreducibility used by
Dummigan and Tiep, via Schur's lemma. The new points are the elementary proof,
the explicit constant $q^{r-1}N_{r-1}$, and the fact that the argument rests on
an identity of integer matrices, which will give $p$-adic information. The same computation gives, with no appeal to
unirationality, the following statement, which is also contained in
\cite{Tate1965,SK1979,Shimada2001}:

\begin{theorem}[proved in \S\ref{sec:tate-cor}]
\label{thm:tate-intro}
For every $r\ge2$ and every prime power $q$, the geometric Frobenius
$\Frob_{q^2}$ acts on $H^{2r-2}(X_{\overline\F_p},\Ql)$ by the scalar
$q^{2r-2}$, and $H^{2k}(X_{\overline\F_p},\Ql(k))$ is spanned by
algebraic cycle classes for every $k$. In particular the Tate
conjecture holds for $X$ unconditionally.
\end{theorem}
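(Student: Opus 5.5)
The plan is to prove that the classes of generators span the middle cohomology. Once that is done, the Frobenius statement is formal, and the other degrees are handled separately.

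\emph{Frobenius on each generator.} Each generator $P\in\mathcal G$ is defined over $\F_{q^2}$. Hence $\Frob_{q^2}$ fixes the class $[P]\in H^{2r-2}(X_{\overline\F_p},\Ql(r-1))$. Untwisting, $\Frob_{q^2}$ acts on $[P]\in H^{2r-2}(X_{\overline\F_p},\Ql)$ by $(q^2)^{r-1}=q^{2r-2}$.

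\emph{Spanning the middle degree.} It remains to show that the $\Ql$-span of the $[P]$ is all of $H^{2r-2}$. Spanning is equivalent to the Gram matrix $G=(\langle [P],[P']\rangle)_{P,P'\in\mathcal G}$ having rank $b_{2r-2}(X)$, because the Poincar\'e pairing is perfect. The intersection numbers are given by Shimada's formula \cite[(4.1)]{Shimada2001}. For $\dim(P\cap P')=k$, the number $\langle [P],[P']\rangle$ depends only on $k$, via the excess intersection formula on a linear space of dimension $k$. So $G$ is a linear combination of the adjacency matrices $A_j$ of the distance-$j$ relations in the dual polar graph on $\mathcal G$, where distance $j$ corresponds to $\dim(P\cap P')=r-1-j$.

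The dual polar graph is distance-regular, so the $A_j$ generate a commutative Bose--Mesner algebra. The eigenvalues of the $A_j$ are known explicitly, as are the multiplicities $m_0,\dots,m_{r}$ of the eigenspaces. This is the construction of \S\ref{sec:decomp}. I would then compute the eigenvalue of $G$ on each eigenspace. The expected outcome is the following:
\begin{itemize}
\item $G$ vanishes on all eigenspaces except the trivial one (spanned by the all-ones vector) and one further eigenspace $\Ur$;
\item on $\Ur$ the eigenvalue is $(-1)^{r-1}q^{r-1}N_{r-1}$;
\item on the trivial eigenspace the eigenvalue is $\deg L^{r-1}$ times a positive constant.
\end{itemize}
Then $\operatorname{rank}G=1+\dim\Ur$. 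Finally I would compare $1+\dim\Ur$ with the known Betti number
\[
b_{2r-2}(X)=1+\frac{q(q^{2r-1}+1)}{q+1}
\]
and check that they agree.

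\emph{Main obstacle.} The main obstacle is the eigenvalue computation. It requires evaluating $\sum_k c_k\,\theta_j(A_{r-1-k})$ using the $q$-Krawtchouk-type eigenvalues of the dual polar graph, and proving cancellation on all but two eigenspaces. I would first try a generating-function or $q$-binomial identity. Failing that, I would use the three-term recurrence of the dual polar graph's intersection array.

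\emph{Other degrees.} For $k\neq r-1$, Lefschetz-type results for hypersurfaces give $H^{2k}\cong\Ql(-k)$, spanned by $L^k$ (or by the class dual to it). This is Proposition~\ref{prop:belowmiddle}, so these groups are algebraic; odd cohomology outside the middle vanishes. Combining the three steps, all even cohomology is spanned by algebraic classes. Frobenius acts by the scalar $q^{2r-2}$ on the middle degree, since it is spanned by Frobenius-fixed twisted classes. The Tate conjecture follows immediately.
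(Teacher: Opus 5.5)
\textbf{Verdict: the proposal has a gap.} Your outline has the same architecture as the paper's proof. Frobenius fixes the classes of the $\F_{q^2}$-rational generators. Spanning is reduced, via the perfect pairing, to $\mathrm{rank}$ of the Gram matrix being $\beta_{2r-2}(X)$. The Gram matrix lies in the Bose--Mesner algebra of the dual polar graph. The non-middle degrees come from Lefschetz. The Frobenius and Lefschetz steps are fine.

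The one step with real content is only listed as an ``expected outcome''. That step is: the Gram matrix vanishes on every eigenspace of $A$ except $W_0$ and one other, and that other eigenspace has dimension $q(q^{2r-1}+1)/(q+1)$. Your primary plan would need the full eigenmatrix of the scheme, that is, the eigenvalue $P_j(i)$ of $A_j$ on each $W_i$. It would then need a proof that $\sum_j f(r-1-j)P_j(i)=0$ for $1\le i\le r-1$, plus the multiplicity formula from the literature. None of this is supplied, so the argument stops exactly where the theorem lives. A smaller point: the eigenvalue on $W_0$ is not ``$\deg L^{r-1}$ times a constant''. It is the row sum $\sum_j k_jf(r-1-j)=N_r/(q+1)$, and only its nonvanishing matters.

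The missing idea is to change basis inside the Bose--Mesner algebra before diagonalising. With this change, your fallback via the three-term recurrence becomes a two-line computation.
\begin{itemize}
\item Since $\sum_jA_j=J$ and $f(r-1-j)=(1-(-q)^{r-j})/(q+1)$, one has $M=(J-M')/(q+1)$ with $M'=\sum_j(-q)^{r-j}A_j$.
\item Consecutive coefficients of $M'$ differ by the factor $-q$. So the recurrence makes the coefficient of $A_m$ in $AM'$ equal to $(-q)^{r-m}\bigl(-b_m/q+a_m-qc_m\bigr)$.
\item Using $a_m=(q-1)c_m$, this bracket is $-b_m/q-c_m=-c_r$ for every $m$. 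Hence $AM'=-c_rM'$.
\item Thus every column of $M'$ lies in the single eigenspace $W_r=\ker(A+c_r)$. This space is orthogonal to $W_0$ because $-c_r\ne b_0$.
\item $M'$ is symmetric and a polynomial in $A$, so it is $\eta_r$ times the orthogonal projection onto $W_r$; that is, $M'^2=\eta_rM'$.
\item A diagonal entry of this identity gives $\eta_r=(-1)^rq^{r-1}(q+1)N_{r-1}$, using $\sum_jk_jq^{-2j}=\frac{q+1}{q}N_{r-1}$.
\item Taking traces gives $\dim W_r=(-q)^rN_r/\eta_r=q(q^{2r-1}+1)/(q+1)$.
\end{itemize}
Consequently $M$ acts by $N_r/(q+1)$ on $W_0$, by $(-1)^{r-1}q^{r-1}N_{r-1}$ on $W_r$, and by $0$ on $(W_0\oplus W_r)^{\perp}$. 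This is exactly the rank count you need, and it uses neither the intermediate eigenvalues nor the multiplicity formula of the scheme.
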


\medskip
\noindent\emph{Idea of the proof.} Two generators meet cleanly, and by the excess
intersection formula $P\cdot P'=f(j)=(1-(-q)^{j+1})/(q+1)$ with
$j=\dim(P\cap P')$ (\S\ref{subsec:intnum}). For generators in special position
with respect to the coordinates this is \cite[Lemma~9.1]{DT1999}; see also
Shioda's remark quoted there. So the Gram matrix $M$ lies in the
Bose--Mesner algebra of the dual polar scheme of type ${}^2A_{2r-1}(q)$. In fact
$M=(J-M')/(q+1)$, where $J$ is the all-ones matrix and
$M'=\bigl((-q)^{\dim\widetilde P\cap\widetilde P'}\bigr)$. One identity among the
parameters of the scheme gives $AM'=\theta_rM'$ for the adjacency matrix $A$
(\S\ref{subsec:key}). Hence $M'$ is a multiple of the orthogonal projection onto
the eigenspace $\Wst$, and $M$ has exactly two nonzero eigenvalues. Comparing
$\dim\Wst$ with the Betti number forces the generators to span
(\S\ref{subsec:span}). The form on $\Wst$ is multiplied by
$(-1)^{r-1}q^{r-1}N_{r-1}>0$.

\subsection*{The lattice of generators, $p$-adically}

Let $\Lambda\subset\mathrm N^{r-1}(X)$ be the lattice spanned by the generators
inside the group of cycles modulo numerical equivalence. By \cite{Shimada2001},
$\mathrm N^{r-1}(X)/\Lambda$ is a finite $p$-group, and whether it vanishes is
open in general. The diagonal torus $S=\mu_{q+1}^{2r}/\mu_{q+1}$ acts on
$\mathcal G$, and $\Lambda\otimes W(\F_{q^2})$ splits into rank-one components
$\Lambda_a$ indexed by characters. Let $e_\Lambda(a)$ be the valuation of the
pairing $\Lambda_a\times\Lambda_{-a}$. These are exactly the exponents of the
$p$-adic elementary divisors of the Gram matrix. From $M'^2=\eta_rM'$ we derive
a trace identity for the torus-Fourier coefficients of the intersection form
(Proposition~\ref{prop:gentrace}). This information is not accessible through
Schur's lemma, since the relevant modules need not be irreducible modulo $p$. As consequences (Corollary~\ref{cor:gentrace}):
\begin{itemize}
\item $e_\Lambda(a)\le\nu(r-1)$ for every $a$, where $q=p^\nu$; so the discriminant
  group of $\Lambda$ is killed by $q^{r-1}$;
\item $q^{r-1}\,\mathrm N^{r-1}(X)\subset\Lambda$;
\item $v_p[\mathrm N^{r-1}(X):\Lambda]\le\frac12\nu(r-1)(\beta_{2r-2}-1)$.
\end{itemize}

\subsection*{Finite covers}

The standard conjecture of Hodge type descends along finite surjective
morphisms (Lemma~\ref{lem:descent}), which gives:

\begin{theorem}[proved in \S\ref{sec:fermat}]
\label{thm:fermat-intro}
Let $Z$ be a nonsingular projective variety of even dimension over
$\overline{\F}_p$ admitting a finite surjective morphism from a
Hermitian variety --- for instance a nonsingular finite quotient of
one. Then the standard conjecture of Hodge type holds for $Z$.
\end{theorem}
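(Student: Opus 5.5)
The plan is to deduce the result from two inputs: the main theorem for Hermitian varieties (Theorem~\ref{thm:main-intro}), and a descent lemma for the standard conjecture of Hodge type along finite surjective morphisms (Lemma~\ref{lem:descent}). Let $f\colon Y\to Z$ be finite surjective with $Y=\Herm(M,q^2)$ and $\dim Z=\dim Y=n$ even. Then $M$ is odd, so $Y=\Herm(2r-1,q^2)$ with $n=2r-2$; the odd-dimensional case does not arise for even-dimensional $Z$. First I would reduce $\mathrm{Hdg}(Y)$ in all degrees $2i\le n$ to what is proved. For $i<r-1$, Proposition~\ref{prop:belowmiddle} (and the hard Lefschetz structure of cohomology below the middle, which equals that of projective space) shows the primitive algebraic part vanishes, so there is nothing to check. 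For $i=r-1$, Theorem~\ref{thm:main-intro} together with Proposition~\ref{lem:Qstructure} (generators span $\mathcal A^{r-1}(Y)$) gives positivity for the hyperplane class $L_Y$.

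The descent step is the heart of the argument. Fix an ample class $L_Z$ on $Z$. Then $f^*L_Z$ is ample on $Y$, but it need not be the hyperplane class. So I first need $\mathrm{Hdg}(Y)$ for an arbitrary ample class. Since $\mathrm{Pic}(Y)\otimes\Q$ is one-dimensional for $\dim Y\ge 3$ (Lefschetz; $H^2$ is spanned by $L_Y$ by Theorem~\ref{thm:tate-intro} and the Betti numbers), we have $f^*L_Z=cL_Y$ numerically with $c>0$. Hence the form for $f^*L_Z$ is a positive multiple of the one for $L_Y$, and primitivity is unchanged. When $\dim Y=2$, i.e.\ $r=2$, $Z$ is a surface and $\mathrm{Hdg}(Z)$ is already known \cite{Hartshorne}, so this case can be dispatched directly.

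Next, for $x\in\mathcal A^i(Z)$ primitive with respect to $L_Z$, the class $f^*x$ is algebraic, and it is primitive for $f^*L_Z$ because $f^*(x\cupp L_Z^{n-2i+1})=f^*x\cupp (f^*L_Z)^{n-2i+1}$. The projection formula gives $\deg(f^*x\cupp f^*y\cupp (f^*L_Z)^{n-2i})=\deg f\cdot\deg(x\cupp y\cupp L_Z^{n-2i})$, and $f_*f^*=\deg f$ shows that $f^*$ is injective on cohomology with $\Q_\ell$ (hence $\Q$) coefficients. Therefore $(-1)^i\deg(x\cupp x\cupp L_Z^{n-2i})=(\deg f)^{-1}(-1)^i\deg(f^*x\cupp f^*x\cupp(f^*L_Z)^{n-2i})>0$ for $x\neq0$. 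This is Lemma~\ref{lem:descent}.

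I expect the main obstacle to be the mismatch of polarizations: the statement of $\mathrm{Hdg}$ on $Y$ is only established for the hyperplane class, whereas descent needs it for $f^*L_Z$. I would handle this through the rank-one Néron--Severi group of $Y$ as above, and I would double-check the $r=2$ case separately, where Néron--Severi is large but the surface result applies to $Z$. A secondary point is that the pullback map must preserve the rational $\Q$-structures on which positivity is asserted. This holds because $f^*$ sends cycle classes to cycle classes.
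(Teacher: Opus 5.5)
Your argument is correct and is essentially the paper's proof: you combine Lemma~\ref{lem:descent} with the fact that every ample class on the Hermitian variety is, in cohomology, a positive multiple of the hyperplane class, so that $\mathrm{Hdg}$ holds for the pulled-back polarization (the paper's Lemmas~\ref{lem:scaling} and~\ref{lem:allpol}). The only differences are minor: you obtain the rank-one statement cohomologically from weak Lefschetz on $H^2$ instead of from Grothendieck--Lefschetz for $\mathrm{Pic}$, and in the surface case you apply the Hodge index theorem to $Z$ directly, whereas the paper applies it to the Hermitian surface and then descends.
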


For example, let $X_N^n\colon\sum_{i=0}^{n+1}x_i^{\,N}=0$ be the Fermat variety of
degree $N$ ($p\nmid N$). If $N\mid q+1$ and $e=(q+1)/N$, then $X_N^{2m}$ is the
quotient of $\Herm(2m+1,q^2)$ by $\mu_e^{2m+2}/\mu_e$ via $[x_i]\mapsto[x_i^{\,e}]$, so
Theorem~\ref{thm:fermat-intro} applies.

\begin{corollary}[proved in \S\ref{sec:fermat}]
\label{cor:fermat-intro}
The standard conjecture of Hodge type holds for the Fermat variety
$X_N^{2m}$ over $\overline{\F}_p$ whenever
\begin{equation}
\label{eq:sscongruence}
  p^{\nu}\equiv-1\pmod N\qquad\text{for some }\nu\ge1 ,
\end{equation}
which is precisely the condition for a suitable $q$ to exist. By
Theorem~II of \cite{SK1979}, for $N\ge4$ condition
\eqref{eq:sscongruence} characterises the \emph{supersingular} Fermat
varieties of even dimension, supersingular meaning here that all even
cohomology is spanned by algebraic cycle classes
\textup{(}Definition~\ref{def:ss}\textup{)}. In
odd dimension the conjecture holds for $X_N^{n}$ with no hypothesis on
$N$ beyond $p\nmid N$, by Proposition~\ref{prop:belowmiddle}\textup{(i)}.
\end{corollary}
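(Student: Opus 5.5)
The plan is to deduce the corollary from Theorem~\ref{thm:fermat-intro} by realising $X_N^{2m}$ as the target of a finite surjective morphism from a Hermitian variety, exactly as in the example after that theorem. The supersingularity remark and the odd-dimensional remark need only short separate justifications.

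\emph{From the congruence to a suitable $q$.} Suppose $p^\nu\equiv-1\pmod N$ for some $\nu\ge1$, and put $q:=p^\nu$. Then $N\mid q+1$. Set $e:=(q+1)/N$, which is prime to $p$ because $q+1\equiv1\pmod p$. Conversely, if $N\mid q+1$ for some power $q=p^\nu$, then $p^\nu\equiv-1\pmod N$. This proves the phrase ``precisely the condition for a suitable $q$ to exist''.

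\emph{The covering map.} Consider the morphism $\pi\colon\Herm(2m+1,q^2)\to X_N^{2m}$ given by $[x_0:\dots:x_{2m+1}]\mapsto[x_0^{e}:\dots:x_{2m+1}^{e}]$. It is well defined, since $\sum (x_i^e)^N=\sum x_i^{q+1}$, and the coordinates $x_i^e$ never vanish simultaneously. It is the restriction of the finite morphism $\mathbb P^{2m+1}\to\mathbb P^{2m+1}$, $x_i\mapsto x_i^e$, so it is finite. For surjectivity, take a point $[y_i]$ of $X_N^{2m}$ and choose $e$-th roots $x_i$ of the $y_i$ over the algebraically closed field; then $\sum x_i^{q+1}=\sum y_i^N=0$, so $[x_i]$ lies in $\Herm(2m+1,q^2)$ and maps to $[y_i]$. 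Moreover $\pi$ is the quotient by $\mu_e^{2m+2}/\mu_e$. The Fermat variety $X_N^{2m}$ is nonsingular because $p\nmid N$, and it has even dimension $2m$. Theorem~\ref{thm:fermat-intro} therefore applies, with $r=m+1\ge2$ when $m\ge1$. The case $m=0$ is a finite set of points, where the statement is trivial.

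\emph{The remaining assertions.} The characterisation of supersingular even-dimensional Fermat varieties for $N\ge4$ is quoted directly from Theorem~II of \cite{SK1979}, read with Definition~\ref{def:ss}; it needs no new argument. For odd dimension $n$, Proposition~\ref{prop:belowmiddle}(i) gives the conjecture for any smooth projective variety with the relevant structure. I expect it to apply to $X_N^n$ because, by weak Lefschetz and hard Lefschetz, the algebraic part below the middle for a hypersurface of dimension at least $3$ is spanned by powers of $L$, whose primitive part is $0$. For $n=1$ there is nothing to check.

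I expect the only real point of care to be verifying the hypotheses of Theorem~\ref{thm:fermat-intro}: that $\pi$ is indeed finite and surjective, and that the source is the nonsingular Hermitian variety. Both follow from the explicit description above. Everything else is bookkeeping.
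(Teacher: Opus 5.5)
Your proposal is correct and takes the paper's route: with $q=p^\nu$ and $e=(q+1)/N$, the map $[x_i]\mapsto[x_i^{e}]$ of Lemma~\ref{lem:cover} is a finite surjective morphism $\Herm(2m+1,q^2)\to X_N^{2m}$, and Theorem~\ref{cor:quotient} (Lemmas~\ref{lem:descent} and~\ref{lem:allpol}) finishes. Your surjectivity argument by taking $e$-th roots is a slightly more direct substitute for the paper's irreducibility-and-dimension argument. For the odd-dimensional remark, Proposition~\ref{prop:belowmiddle}(i) is stated for nonsingular hypersurfaces and uses only weak Lefschetz, so hard Lefschetz is not needed.
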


The $p$-adic bounds descend as well. For any finite surjective morphism
$\pi\colon X\to Z$ of degree prime to $p$, with $2m=\dim Z$, the lattice
$\Lambda_Z$ spanned by the images $\pi_*[P]$ of the generators satisfies
$q^m\,\mathrm N^m(Z)\subset\Lambda_Z\otimes\Z_{(p)}$, and $\Lambda_Z$ is unimodular at
the primes $\ell\ne p$ not dividing $\deg\pi$
(Proposition~\ref{prop:pdescent}). For quotients by subgroups of
$\PGU_{2m+2}(q)$, such as the supersingular Fermat varieties, the Gram matrix of
$\Lambda_Z$ is obtained by summing over orbits (Corollary~\ref{cor:pdescent}).

\section{Preliminaries}
\label{sec:prelim}

This section is a survey. It fixes the statements of the two
conjectures involved and collects, with precise references, the
external results used later. 

\subsection{The Tate conjecture}
\label{subsec:tate-survey}

Let $Y$ be a smooth projective variety of dimension $n$ over a finite
field $\F_{q_0}$ of characteristic $p$, let $\ell\ne p$, and let
$\Frob_{q_0}$ denote the geometric Frobenius acting on
$H^{2k}(Y_{\overline{\F}_{q_0}},\Ql(k))$. Let
$Z^{k}(Y)$ be the group of algebraic cycles of codimension $k$ on $Y$
\emph{defined over the base field $\F_{q_0}$} and
\[
  \mathrm{cl}^{k}\;:\;Z^{k}(Y)\otimes\Ql\;\longrightarrow\;
  H^{2k}\bigl(Y_{\overline{\F}_{q_0}},\Ql(k)\bigr)
\]
the cycle class map.

\begin{conjecture}[Tate \cite{Tate1965}]
\label{conj:tate}
The image of $\mathrm{cl}^{k}$ equals the space of Frobenius
invariants
\[
  H^{2k}
  \bigl(Y_{\overline{\F}_{q_0}},\Ql(k)\bigr)^{\Frob_{q_0}=1}.
\]
\end{conjecture}

\subsection{The standard conjecture of Hodge type}
\label{subsec:hodge-survey}

Let $Y$ be smooth projective of dimension $n$ over an algebraically
closed field, $L$ the class of an ample divisor, and
\[
  \mathcal P^{2i}(Y)\;:=\;\ker\Bigl(L^{\,n-2i+1}:H^{2i}\bigl(Y,\Ql(i)\bigr)
  \to H^{2n-2i+2}\bigl(Y,\Ql(n-i+1)\bigr)\Bigr)
\]
the primitive part in degree $2i\le n$. Let $\mathcal A^{i}(Y)\subset
H^{2i}(Y,\Ql(i))$ denote the $\Q$-span of the algebraic cycle classes
of codimension $i$, and $\mathcal A^{i}_{\mathrm{prim}}(Y)$ its
primitive part. The pairing
$(x,y)\mapsto\deg(x\cupp y\cupp L^{\,n-2i})$ takes rational values on
$\mathcal A^{i}(Y)$, because it does so on classes of algebraic
cycles; the $\Q$-span is taken precisely so that the positivity
assertion below makes sense.

\begin{conjecture}[Grothendieck; standard conjecture of Hodge type
$\mathrm{Hdg}(Y)$, see {\cite[\S3]{Kleiman}}]
\label{conj:hdg}
For $2i\le n$ the symmetric bilinear form
\[
  (x,y)\;\longmapsto\;(-1)^{i}\deg\bigl(x\cupp y\cupp L^{\,n-2i}\bigr)
\]
is positive definite on $\mathcal A^{i}_{\mathrm{prim}}(Y)$.
\end{conjecture}

For a hypersurface, all degrees strictly below the middle are empty of
content. We record this once, since it is what makes the
odd-dimensional case of our results trivial and what disposes of the
non-middle degrees in the even-dimensional case; in even
dimension we also record the resulting orthogonal decomposition of the
middle cohomology.

\begin{proposition}[Hodge type below the middle degree; Lefschetz
decomposition]
\label{prop:belowmiddle}
Let $Y\subset\mathbb P^{n+1}$ be a nonsingular hypersurface of
dimension $n\ge1$ over an algebraically closed field, and let $L$ be
the hyperplane class. In \textup{(ii)--(v)} assume moreover that
$n=2m$ is even, and write $\langle a,b\rangle:=\deg(a\cupp b)$ for the
Poincar\'e pairing on $H^{n}(Y,\Ql(m))$. Then\textup{:}
\begin{enumerate}
\item[\textup{(i)}] For every $i$ with $2i<n$,
  \[
    \mathcal A^{i}(Y)=\Q\cdot L^{i},
    \qquad
    \mathcal A^{i}_{\mathrm{prim}}(Y)=
    \begin{cases}
      \Q & (i=0),\\[2pt]
      0  & (i\ge1),
    \end{cases}
  \]
  and Conjecture~\textup{\ref{conj:hdg}} holds for $Y$ in each such
  degree $i$. In particular, if $n$ is odd then
  Conjecture~\textup{\ref{conj:hdg}} holds for $Y$ outright.
\item[\textup{(ii)}] $H^{\mathrm{odd}}(Y,\Ql)=0$, and
  $H^{2k}(Y,\Ql(k))=\Ql\cdot L^{k}$ is one-dimensional for
  $0\le k\le n$, $k\ne m$.
\item[\textup{(iii)}] $\langle L^{m},L^{m}\rangle=\deg Y>0$.
\item[\textup{(iv)}] $\mathcal P^{n}(Y)=\ker\bigl(L:H^{n}(Y,\Ql(m))\to H^{n+2}(Y,\Ql(m+1))\bigr)
  =\bigl(L^{m}\bigr)^{\perp}$.
\item[\textup{(v)}] The pairing restricts to a nondegenerate form on each
  summand of the orthogonal decomposition
  \[
    H^{n}\bigl(Y,\Ql(m)\bigr)\;=\;\Ql\cdot L^{m}\;\perp\;\mathcal P^{n}(Y).
  \]
\end{enumerate}
Only weak Lefschetz and Poincar\'e duality are used\textup{;} the hard
Lefschetz theorem \textup{\cite[\S4.1]{DeligneWeilII}} is not required.
\end{proposition}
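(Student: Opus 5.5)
The plan is to derive everything from the weak Lefschetz theorem for $Y\hookrightarrow\mathbb P^{n+1}$ together with Poincar\'e duality on $Y$.

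\textbf{Step 1: part (ii).} Weak Lefschetz says that restriction $H^k(\mathbb P^{n+1},\Ql)\to H^k(Y,\Ql)$ is an isomorphism for $k<n$ and injective for $k=n$. Since $H^*(\mathbb P^{n+1},\Ql)$ is generated by the hyperplane class, we get $H^{2k}(Y,\Ql(k))=\Ql\cdot L^k$ and $H^{\mathrm{odd}}=0$ in degrees below $n$. The class $L^k$ is nonzero there because $L^n\neq0$: indeed $\deg L^n=\deg Y>0$. Poincar\'e duality pairs $H^k$ with $H^{2n-k}$ perfectly, which transfers these statements to degrees above $n$. When $n$ is odd, $H^n$ is odd-degree cohomology, but (ii) only concerns $n$ even. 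When $n=2m$, the odd degrees are all different from $n$, so $H^{\mathrm{odd}}=0$. For $k>m$, $L^k\neq0$ because $\langle L^k,L^{n-k}\rangle=\deg Y$.

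\textbf{Step 2: part (i).} For $2i<n$, part (ii) gives $H^{2i}(Y,\Ql(i))=\Ql L^i$; the weak Lefschetz input for this holds for any $n$. Every algebraic class is therefore a $\Ql$-multiple of $L^i$, and the multiple is rational because pairing with $L^{n-i}$ gives rational degrees divided by $\deg Y$. Since $L^i$ is itself algebraic, $\mathcal A^i=\Q L^i$. Primitivity means lying in the kernel of $L^{n-2i+1}$. The image of $L^i$ under this map is $L^{n-i+1}$, which is nonzero if and only if $n-i+1\le n$, that is $i\ge1$. Hence $\mathcal A^i_{\mathrm{prim}}=0$ for $i\ge1$, while for $i=0$ the map lands in $H^{2n+2}=0$, so $\mathcal A^0_{\mathrm{prim}}=\Q$. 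The form on $\Q\cdot1$ is $\deg L^n=\deg Y>0$, so it is positive definite. If $n$ is odd, every $i$ with $2i\le n$ satisfies $2i<n$, so Conjecture~\ref{conj:hdg} holds outright.

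\textbf{Step 3: parts (iii)--(v).} Part (iii) is the computation $\deg(L^{m}\cupp L^{m})=\deg Y$, since $L^n$ is the class of $\deg Y$ points. For (iv), take $x\in H^{n}$. By (ii), $Lx$ lies in the line $\Ql L^{m+1}$, and Poincar\'e duality pairs this line perfectly with $\Ql L^{m-1}$. Therefore $Lx=0$ if and only if $\langle Lx,L^{m-1}\rangle=\langle x,L^m\rangle=0$. For (v), $\langle L^m,L^m\rangle\ne0$ gives $H^n=\Ql L^m\oplus (L^m)^\perp$. The pairing is nondegenerate on $H^n$ by Poincar\'e duality, and it is nondegenerate on the anisotropic line $\Ql L^m$. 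An orthogonal summand of a nondegenerate space whose complement is nondegenerate is itself nondegenerate, so the pairing is nondegenerate on $\mathcal P^n(Y)$.

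The only point needing care is getting the precise weak Lefschetz range (injectivity in degree $n$) and checking $L^k\neq0$. Hard Lefschetz is never invoked, because in every degree other than $n$ the cohomology is one-dimensional and spanned by a power of $L$.
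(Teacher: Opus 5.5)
Your proposal is correct and follows the same route as the paper: (ii) from weak Lefschetz plus Poincar\'e duality, with $\deg L^n=\deg Y\neq0$ ensuring $L^k\neq0$; (i) from one-dimensionality below the middle; (iv) by pairing $L\cupp x\in\Ql L^{m+1}$ against $L^{m-1}$; and (v) from (iii), (iv) and perfectness of the pairing. You additionally spell out details the paper's terse proof leaves implicit, namely the rationality of the coefficient of $L^i$, the odd-$n$ case of (i), and the nondegeneracy of an orthogonal summand.
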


\begin{proof}
All statements follow from weak Lefschetz (Fact~\ref{fact:weakL}) and Poincar\'e
duality (Fact~\ref{fact:PD}). For $k\ne m$, $H^{2k}(Y,\Ql(k))=\Ql L^k$ with
$\deg L^n=\deg Y>0$. So for $2i<n$ no nonzero multiple of $L^i$ with $i\ge1$ is
primitive, and in degree $0$ the form is multiplication by $\deg Y$. For (iv),
$z\mapsto\deg(z\cupp L^{m-1})$ is an isomorphism
$H^{n+2}(Y,\Ql(m+1))\to\Ql$, so $L\cupp a=0$ if and only if $\langle a,L^m\rangle=0$.
Part (v) follows from (iii), (iv) and Fact~\ref{fact:PD}.
\end{proof}

\subsection{Results used in the proof}
\label{subsec:imports}

We list the external results we shall use, in the form in which we use
them.

\subsubsection*{(A) Weak Lefschetz, Poincar\'e duality, cycle classes}

\begin{fact}[Weak Lefschetz]
\label{fact:weakL}
If $Y\subset\mathbb P^{n}$ is a smooth hypersurface then the
restriction $H^{m}(\mathbb P^{n},\Ql)\to H^{m}(Y,\Ql)$ is an isomorphism for
$m<\dim Y$ and injective for $m=\dim Y$.
\end{fact}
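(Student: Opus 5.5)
The plan is to deduce Fact~\ref{fact:weakL} from Artin's affine vanishing theorem combined with Poincar\'e duality on the open complement, via the long exact sequence of compactly supported cohomology. Throughout we work over the algebraically closed field $K$ with $\Ql$-coefficients, as in our conventions, and write $U:=\mathbb P^{n}\setminus Y$. Only the facts that $Y$ is an ample divisor and that $\mathbb P^n$ is smooth will be used; smoothness of $Y$ is needed only so that $\dim Y=n-1$ is the relevant threshold and the statement matches its later use.

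\emph{Step 1: vanishing for $U$.} Since $Y$ is a hypersurface, $U$ is affine: it is the complement of the zero locus of a section of $\mathcal O(\deg Y)$, so it is a principal open subset of $\mathbb P^n$ after the Veronese embedding. It is also smooth of pure dimension $n$. Artin's vanishing theorem (SGA~4, XIV) gives $H^{i}(U,\Ql)=0$ for $i>n$. Poincar\'e duality for the smooth variety $U$ identifies $H^{i}_c(U,\Ql)$ with the dual of $H^{2n-i}(U,\Ql(n))$. We conclude that
\[
  H^{i}_c(U,\Ql)=0\qquad\text{for } i<n .
\]

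\emph{Step 2: the exact sequence.} For the closed immersion $Y\hookrightarrow\mathbb P^n$ with open complement $U$, and since $\mathbb P^n$ and $Y$ are proper, we have the long exact sequence
\[
  \cdots\to H^{i}_c(U,\Ql)\to H^{i}(\mathbb P^n,\Ql)\to H^{i}(Y,\Ql)\to H^{i+1}_c(U,\Ql)\to\cdots,
\]
in which the middle map is restriction. We now split into two cases according to $i$.
\begin{itemize}
\item If $i<\dim Y=n-1$, then both $i<n$ and $i+1<n$. Step~1 then kills both flanking terms, so restriction is an isomorphism.
\item If $i=n-1=\dim Y$, then only the left term $H^{n-1}_c(U,\Ql)$ is known to vanish. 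This gives injectivity of restriction.
\end{itemize}
This is exactly the assertion of Fact~\ref{fact:weakL}.

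The only non-formal input is Artin vanishing for affine varieties, which is a deep but standard theorem, so I would simply cite it. I expect no further obstacle beyond checking the indexing: the threshold $\dim Y=n-1$ must match the vanishing range $i<n$ for $H^i_c(U)$. An alternative route avoiding compact supports would use the Gysin sequence for $U$ together with purity, but the route above is shorter.
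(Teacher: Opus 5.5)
Your argument is correct, but it takes a different route from the paper: the paper gives no proof of its own. It cites Milne's weak Lefschetz theorem for hyperplane sections, in its ``dual form'', after re-embedding $\mathbb P^n$ by the $d$-uple Veronese map so that $Y$ becomes a hyperplane section.

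What you give is essentially the standard proof behind that citation, carried out directly for the hypersurface:
\begin{enumerate}
\item You note that $U=\mathbb P^n\setminus Y=D_+(F)$ is affine, which is all the Veronese step accomplishes.
\item Artin vanishing together with Poincar\'e duality on the smooth, non-proper $U$ gives $H^i_c(U,\Ql)=0$ for $i<n$.
\item You read the result off the sequence $H^i_c(U,\Ql)\to H^i(\mathbb P^n,\Ql)\to H^i(Y,\Ql)\to H^{i+1}_c(U,\Ql)$.
\end{enumerate}
The middle arrow of that sequence is already the restriction map, so you never need to pass to a dual formulation as the paper does. The citation is shorter. Your version is self-contained modulo Artin vanishing and duality with compact supports, and it shows that smoothness of $Y$ plays no role here.

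One small correction: your remark that smoothness of $Y$ is needed ``so that $\dim Y=n-1$ is the relevant threshold'' is inaccurate. Every hypersurface in $\mathbb P^n$ has pure dimension $n-1$, and your argument uses only that $U$ is smooth and affine.
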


This is the dual form of \cite[Ch.~VI, Thm.~7.1(a)]{Milne}, applied to the
$d$-uple Veronese embedding of $\mathbb P^n$.

\begin{fact}[Poincar\'e duality {\cite[Ch.~VI, Cor.~11.2]{Milne}}]
\label{fact:PD}
For $Y$ smooth projective of dimension $n$ the cup product
\[
  H^{m}(Y,\Ql(i))\times H^{2n-m}(Y,\Ql(n-i))\to H^{2n}(Y,\Ql(n))
  \xrightarrow{\ \deg\ }\Ql
\]
is a perfect pairing.
\end{fact}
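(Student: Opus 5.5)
Since this is a standard result, the plan is to reduce it to the form proved in \cite[Ch.~VI, Cor.~11.2]{Milne}, not to reprove duality. Milne states duality with finite coefficients $\Lambda_n=\Z/\ell^n\Z$. For $Y$ smooth and proper of pure dimension $n$ over an algebraically closed field, he gives a trace isomorphism $H^{2n}(Y,\mu_{\ell^n}^{\otimes n})\to\Z/\ell^n$. He also shows that the cup product
\[
  H^{m}(Y,\Lambda_n(i))\times H^{2n-m}(Y,\Lambda_n(n-i))\to H^{2n}(Y,\Lambda_n(n))\to\Z/\ell^n
\]
is a perfect pairing of finite groups. So the first step is to record this statement at each finite level. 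I will check that the Tate twists match: the twists $i$ and $n-i$ add to $n$, which is exactly the twist on which the trace map is defined. Twisting both factors by the same character is harmless, because $\mu_{\ell^n}^{\otimes i}$ is a free rank-one $\Lambda_n$-sheaf, locally constant (indeed constant over an algebraically closed field).

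The second step passes to $\ell$-adic coefficients. Set $H^m(Y,\Z_\ell(i))=\varprojlim_n H^m(Y,\Lambda_n(i))$. The groups at each finite level are finite, so $\varprojlim^1$ vanishes and the inverse limit is exact. The trace maps are compatible with reduction $\Lambda_{n+1}\to\Lambda_n$, and so are the cup products. Hence the finite-level dualities assemble into a pairing of finitely generated $\Z_\ell$-modules. This pairing is perfect modulo torsion: the map to $\mathrm{Hom}_{\Z_\ell}(H^{2n-m},\Z_\ell)$ is an isomorphism up to torsion. The cleanest route is to use the derived form of duality, $R\Gamma(Y,\Z_\ell(i))\simeq R\mathrm{Hom}(R\Gamma(Y,\Z_\ell(n-i)),\Z_\ell)[-2n]$, which is obtained from the finite-level statements by taking derived limits. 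Its cohomology gives an exact sequence involving $\mathrm{Hom}$ and $\mathrm{Ext}^1$ into $\Z_\ell$. The $\mathrm{Ext}^1$ term is torsion.

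Finally, tensor with $\Ql$. This kills all torsion, so the $\mathrm{Ext}^1$ term disappears, and the map $H^m(Y,\Ql(i))\to\mathrm{Hom}(H^{2n-m}(Y,\Ql(n-i)),\Ql)$ becomes an isomorphism of finite-dimensional vector spaces. The map $\deg$ is by definition the $\Ql$-linear extension of the limit trace. I expect the only real obstacle to be the bookkeeping in the second step. One must make sure that $\varprojlim$ commutes with the duality and that torsion is handled correctly; this is why I would argue through the derived statement rather than directly with the groups $H^m$.
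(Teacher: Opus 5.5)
Your proposal is correct and is essentially the paper's approach: the paper gives no argument beyond citing Milne, Ch.~VI, Cor.~11.2, and your passage from $\Z/\ell^k$ to $\Ql$ coefficients is the standard step that citation absorbs. That step is finite-level duality (which holds at the derived level for free because $\Z/\ell^k$ is self-injective), then a derived limit whose only defect is a torsion $\mathrm{Ext}^1$ term killed by $\otimes\Ql$. One notational point: you use $n$ both for $\dim Y$ and for the level in $\Lambda_n=\Z/\ell^n\Z$, for example in $\mu_{\ell^n}^{\otimes n}$, and the two should be separated.
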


\begin{fact}[Cycle classes {\cite[Ch.~VI, \S9]{Milne}},
{\cite[Ch.~19]{Fulton}}]
\label{fact:cycle}
There is a cycle class map $Z^{i}(Y)\to H^{2i}(Y,\Ql(i))$ compatible
with intersection products: the class of a refined intersection
product computed in the Chow group maps to the cup product of the
classes.
\end{fact}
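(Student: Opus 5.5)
The plan is to build the cycle class map from local fundamental classes with supports and to deduce compatibility with products by reducing every refined intersection product to a refined Gysin pullback along a regular embedding, namely the diagonal. For an integral closed subvariety $V\subset Y$ of codimension $i$, purity for the smooth open part of $V$ gives a canonical class $\mathrm{cl}_V\in H^{2i}_V(Y,\Ql(i))$. I would use semi-purity, the vanishing of $H^{j}_V(Y,\Ql)$ for $j<2i$, to see that this class extends uniquely over the singular locus, which has larger codimension. Pushing $\mathrm{cl}_V$ to $H^{2i}(Y,\Ql(i))$ and extending linearly defines $Z^{i}(Y)\to H^{2i}(Y,\Ql(i))$. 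The next step is to check that the map factors through rational equivalence. A principal divisor $\operatorname{div}(f)$ on a subvariety $W$ is the difference of the fibres over $0$ and $\infty$ of a map $\tilde W\to\mathbb P^1$, and these two fibres have the same class because the class of a fibre is pulled back from $H^{2}(\mathbb P^1)$.

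Compatibility with intersections rests on Fulton's formula $\alpha\cdot\beta=\Delta^{!}(\alpha\times\beta)$ \cite[Ch.~8]{Fulton}. It therefore suffices to prove two statements.
\begin{enumerate}
\item[(a)] Exterior products go to exterior cup products under the K\"unneth map, i.e.\ $\mathrm{cl}(\alpha\times\beta)=\mathrm{pr}_1^*\mathrm{cl}(\alpha)\cupp\mathrm{pr}_2^*\mathrm{cl}(\beta)$.
\item[(b)] For the diagonal $\Delta\colon Y\to Y\times Y$, the refined Gysin map $\Delta^{!}$ on Chow groups with supports corresponds to the cohomological pullback $\Delta^*$, taken with supports in $V\cap W$ so that the refined class lives on the intersection.
\end{enumerate}
Statement (a) is local and follows from the multiplicativity of fundamental classes on products of smooth loci. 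For (b), I would follow Fulton's construction of $\Delta^{!}$ by deformation to the normal cone. The cycle $V\times W$ specializes to its normal cone $C\subset N$, where $N=\Delta^*N_{\Delta}$ restricted to $V\cap W$ is the normal bundle. Then $\Delta^{!}$ is the zero-section pullback $0_N^*[C]$, and one must show that both of these steps are realized by operations on cohomology with supports.

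The main obstacle is this specialization step: one must show that cohomological specialization in the deformation to the normal cone sends $\mathrm{cl}_{V\times W}$ to $\mathrm{cl}_C$. I would first try the $\mathbb A^1$-family $M^\circ$, using $\mathbb A^1$-homotopy invariance and the fact that the cycle class of the flat closure of $(V\times W)\times\mathbb G_m$ restricts correctly to both fibres. The restriction to the fibre over $0$ is a pullback along a Cartier divisor, and that case is just the divisor case treated above. After specialization, the zero-section pullback on the vector bundle $N$ agrees with the Thom isomorphism. The Thom class is the cycle class of the zero section, so the cohomological side reduces to a cup product with Chern classes of the excess bundle, matching Fulton's excess formula.

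All of this is carried out in \cite[Ch.~VI, \S9]{Milne}, and in SGA~4$\tfrac12$ (Cycle), with \cite[Ch.~19]{Fulton} supplying the Chow-theoretic side. In the paper I would therefore cite these references rather than reprove them.
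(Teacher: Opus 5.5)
Your proposal agrees with the paper, which gives no proof of this fact and simply cites Milne Ch.~VI \S9 and Fulton Ch.~19. Your outline is the standard argument behind those references: fundamental classes with supports via purity and semi-purity, the reduction $\alpha\cdot\beta=\Delta^{!}(\alpha\times\beta)$, and specialization to the normal cone via $\mathbb A^1$-homotopy invariance. One remark in the last step needs correcting. For a general cone $C\subset N$ there is no excess bundle; instead one uses that $0_N^{*}$ is inverse to the flat pullback $A_*(V\cap W)\to A_*(N)$, and this pullback commutes with cycle classes. The excess-bundle description applies only to clean intersections, such as those of Lemma~\ref{lem:clean}, which are the only ones the paper needs.
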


\begin{fact}[Normal bundles {\cite[Ex.~II.8.4(a)]{Hartshorne}}]
\label{fact:normal}
For a smooth hypersurface $Y_d\subset\mathbb P^{n}$ of degree $d$ one
has $N_{Y_d/\mathbb P^{n}}\cong\mathcal O_{Y_d}(d)$; for a linear
subspace $\Pi$$=\mathbb P^{m}\subset\mathbb P^{n}$ one has
$N_{\Pi/\mathbb P^{n}}\cong\mathcal O_{\Pi}(1)^{\oplus(n-m)}$.
\end{fact}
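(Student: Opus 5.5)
The plan is to compute both normal bundles through their conormal sheaves. For a closed subscheme $Z\subset\mathbb P^n$ with ideal sheaf $\mathcal I$ that is a local complete intersection, the conormal sheaf $\mathcal I/\mathcal I^2$ is locally free of rank equal to the codimension, and $N_{Z/\mathbb P^n}=(\mathcal I/\mathcal I^2)^\vee$. So in each case it suffices to identify $\mathcal I/\mathcal I^2$ as an explicit sum of twisting sheaves on $Z$ and then dualize. Both subschemes are smooth, so this is the normal bundle in the usual sense.

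\emph{Hypersurface.} Let $F$ be the homogeneous polynomial of degree $d$ defining $Y_d$. Multiplication by $F$ gives an isomorphism $\mathcal O_{\mathbb P^n}(-d)\xrightarrow{\sim}\mathcal I_{Y_d}$. This holds because $F$ is a nonzerodivisor in the homogeneous coordinate ring, and on each standard affine chart $\mathcal I_{Y_d}$ is principal, generated by the dehomogenization of $F$. Tensoring the isomorphism with $\mathcal O_{Y_d}=\mathcal O_{\mathbb P^n}/\mathcal I$ identifies
\[
  \mathcal I/\mathcal I^2\;\cong\;\mathcal I\otimes\mathcal O_{Y_d}\;\cong\;\mathcal O_{Y_d}(-d).
\]
Dualizing gives $N_{Y_d/\mathbb P^n}\cong\mathcal O_{Y_d}(d)$.

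\emph{Linear subspace.} Choose coordinates so that $\Pi=\mathbb P^m$ is cut out by $x_{m+1},\dots,x_n$. These $n-m$ linear forms form a regular sequence, and they give a surjection $\mathcal O_{\mathbb P^n}(-1)^{\oplus(n-m)}\to\mathcal I_\Pi$. Restricting this surjection to $\Pi$ yields a surjection $\mathcal O_\Pi(-1)^{\oplus(n-m)}\to\mathcal I/\mathcal I^2$. The source and target are both locally free of rank $n-m$, so the surjection is an isomorphism. The rank of the target is $n-m$ because of the regular-sequence property, which can be checked chartwise, where the $x_i/x_j$ form part of a coordinate system. Dualizing gives $N_{\Pi/\mathbb P^n}\cong\mathcal O_\Pi(1)^{\oplus(n-m)}$.

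An alternative for $\Pi$, which I would mention as a check, uses the Euler sequences. The tangent bundle $T_{\mathbb P^n}|_\Pi$ is a quotient of $\mathcal O_\Pi(1)^{\oplus(n+1)}$, and $T_\Pi$ is the quotient of the sub-sum $\mathcal O_\Pi(1)^{\oplus(m+1)}$ by the same Euler section $\mathcal O_\Pi$. The snake lemma then shows that the cokernel of $T_\Pi\to T_{\mathbb P^n}|_\Pi$ is $\mathcal O_\Pi(1)^{\oplus(n-m)}$.

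There is no real obstacle. The only points needing care are the identification $\mathcal I/\mathcal I^2\cong\mathcal I\otimes\mathcal O_Z$ and the check that the surjection onto the conormal sheaf is injective; both are handled by the regular-sequence (local complete intersection) property on affine charts. Alternatively, one can simply cite \cite[Ex.~II.8.4]{Hartshorne} and \cite[Thm.~II.8.17]{Hartshorne}.
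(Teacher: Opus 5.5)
Your proof is correct. The paper gives no proof of this Fact and only cites Hartshorne, and your conormal-sheaf computation (with the Euler-sequence cross-check for $\Pi$) is the standard argument behind that citation. One small simplification: the identification $\mathcal I/\mathcal I^2\cong\mathcal I\otimes\mathcal O_Z$ holds for every ideal sheaf by right exactness of $\otimes$, so the regular-sequence property is needed only to see that $\mathcal I/\mathcal I^2$ is locally free of rank $n-m$ in the linear case.
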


\begin{fact}[Betti numbers of a nonsingular hypersurface]
\label{fact:chern}
Let $Y\subset\mathbb P^{n+1}$ be a nonsingular hypersurface of degree
$d$ and dimension $n$ over an algebraically closed field of
characteristic $\ne\ell$, and write $\beta_m(Y):=\dim_{\Ql}H^{m}(Y,\Ql)$. Then
$\beta_m(Y)=\beta_m(\mathbb P^n)$ for $m\ne n$, and
\begin{equation}
\label{eq:classicalbetti}
  \beta_n(Y)\;=\;\frac{(d-1)^{\,n+2}+(-1)^{n}(d-1)}{d}+\frac{1+(-1)^n}{2}
\end{equation}
\cite[Ch.~5, (3.3)]{Dimca}, \cite[Exp.~XI]{SGA7}; for Hermitian varieties see
also \cite{Shimada2001}.
\end{fact}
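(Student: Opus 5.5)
The plan is to settle the Betti numbers away from the middle degree with the earlier facts, and to obtain $\beta_n$ from the Euler characteristic. For $m\ne n$, weak Lefschetz (Fact~\ref{fact:weakL}) gives $H^m(Y,\Ql)\cong H^m(\mathbb P^{n+1},\Ql)$ when $m<n$. Poincar\'e duality (Fact~\ref{fact:PD}) then gives the same for $m>n$, since $\beta_m(Y)=\beta_{2n-m}(Y)$. Hence $\beta_m(Y)=\beta_m(\mathbb P^n)$: this is $1$ for even $m\in[0,2n]$ and $0$ otherwise. It therefore suffices to compute the $\ell$-adic Euler characteristic
\[
  \chi(Y)=\sum_m(-1)^m\beta_m(Y),
\]
and to subtract the known contribution of the degrees $m\ne n$.

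To compute $\chi(Y)$ in a characteristic-free way, I would reduce to a Chern class computation. The cleanest route is by lifting. Write $k$ for the algebraically closed base field.
\begin{enumerate}
\item[(a)] Lift the defining equation to $W(k)$, or to any complete DVR with residue field $k$ and fraction field of characteristic $0$. Smoothness is an open condition, so the lifted hypersurface $\mathcal Y$ is smooth and proper over the base.
\item[(b)] By smooth and proper base change, the $\ell$-adic Betti numbers of the special fibre $Y$ equal those of the geometric generic fibre. Via a field embedding into $\mathbb C$ and the comparison theorem, these equal the singular Betti numbers of a complex smooth hypersurface of the same degree and dimension.
\end{enumerate}
If one prefers to avoid lifting, the $\ell$-adic Gauss--Bonnet formula $\chi(Y)=\deg \chern_n(T_Y)$ (self-intersection of the diagonal together with the Lefschetz trace formula for the identity) also works. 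In either case, $\chi(Y)=\deg \chern_n(T_Y)$.

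The Chern class computation is routine. Let $h$ be the hyperplane class. By Fact~\ref{fact:normal} and the Euler sequence,
\[
  \chern(T_Y)=\frac{(1+h)^{n+2}}{1+dh},
\]
and $\deg h^n=d$. So $\chi(Y)=d\sum_{k=0}^{n}\binom{n+2}{k}(-d)^{n-k}$. Completing the binomial sum to $(1-d)^{n+2}$ and removing the two missing terms $k=n+1$ and $k=n+2$ gives
\[
  \chi(Y)=\frac{(1-d)^{n+2}+(n+2)d-1}{d}.
\]
The degrees $m\ne n$ contribute $n$ if $n$ is even and $n+1$ if $n$ is odd. Subtracting this and multiplying by $(-1)^n$ should give \eqref{eq:classicalbetti}. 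The check is that $(-1)^n(1-d)^{n+2}=(d-1)^{n+2}$ and that the constant terms combine to $(-1)^n(d-1)/d+(1+(-1)^n)/2$.

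I expect the main obstacle to be the justification that the étale Euler characteristic equals the degree of the top Chern class in positive characteristic. I would handle it by the lifting argument in (a)--(b), since it only needs smooth proper base change and the complex case. The remaining steps are bookkeeping of signs and parities, and I would verify them on $n=1$ (plane curves, $\beta_1=(d-1)(d-2)$) and $n=2$ (surfaces, $\beta_2=d^3-4d^2+6d-2$).
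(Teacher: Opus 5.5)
Your argument is correct. The paper gives no proof of this Fact; it only cites Dimca (Ch.~5, (3.3)) and SGA~7, Exp.~XI. Your route is essentially the standard proof behind those references: weak Lefschetz plus Poincar\'e duality away from the middle degree, then $\chi(Y)=\deg\chern_n(T_Y)$ with $\chern(T_Y)=(1+h)^{n+2}/(1+dh)$, justified in positive characteristic by lifting or by the $\ell$-adic trace formula for the diagonal. Your binomial and parity bookkeeping checks out, including the $n=1,2$ sanity checks.

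One small caveat: embedding $\operatorname{Frac}W(k)$ into $\mathbb C$ requires $|k|$ to be at most the continuum. This is harmless for $k=\overline{\F}_p$. In general it is avoided by your trace-formula alternative, or by first descending $Y$ to a countable algebraically closed field of definition.
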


\subsubsection*{(B) Excess intersection}

Two closed subvarieties $P,P'$ of a smooth variety $Y$ meet
\emph{cleanly} along $Z=P\cap P'$ if $Z$ is smooth and
$T_zZ=T_zP\cap T_zP'$ inside $T_zY$ for every closed point $z\in Z$
\cite[\S6.3]{Fulton}. In that case the composite
$N_{Z/P}\hookrightarrow N_{Z/Y}\twoheadrightarrow N_{P'/Y}|_{Z}$ is an
injection of vector bundles and one defines the \emph{excess bundle}
\begin{equation}
\label{eq:excessbundle}
  E\;:=\;\bigl(N_{P'/Y}\big|_{Z}\bigr)\big/N_{Z/P},
\end{equation}
which is locally free of rank
$\dim Z-\bigl(\dim P+\dim P'-\dim Y\bigr)$.

\begin{fact}[Excess intersection formula {\cite[Thm.~6.3]{Fulton}}]
\label{fact:excess}
Let $Y$ be smooth, $P'\subset Y$ regularly embedded,
meeting $P$ cleanly along $Z$, with $Z\subset P$ regularly embedded.
If $\dim P+\dim P'=\dim Y$, then
\begin{equation}
\label{eq:excessformula}
  P\cdot P'\;=\;\deg\bigl(\chern_{e}(E)\capp[Z]\bigr),
  \qquad e\;=\;\mathrm{codim}(P',Y)-\mathrm{codim}(Z,P)\;=\;\mathrm{rank}\,E .
\end{equation}
The case $P=P'$ is the self-intersection formula
\cite[Cor.~6.3]{Fulton}, where $Z=P$, $N_{Z/P}=0$ and $E=N_{P/Y}$.
\end{fact}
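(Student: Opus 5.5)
The plan is to deduce \eqref{eq:excessformula} from Fulton's definition of the refined intersection product through deformation to the normal cone, in the form of \cite[Prop.~6.1, Thm.~6.2]{Fulton}. No global property of $Y$ is needed beyond smoothness. Everything happens over $Z=P\cap P'$, where the normal bundle $N:=N_{P'/Y}|_Z$ is a vector bundle of rank $c:=\mathrm{codim}(P',Y)$.

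\emph{Step 1 (definition).} Because $i\colon P'\hookrightarrow Y$ is regularly embedded, the product is $P\cdot P'=\deg\bigl(i^{!}[P]\bigr)$. By definition $i^{!}[P]=0_N^{*}[C_{Z}P]$. Here $C_ZP$ is the normal cone of $Z=P\times_YP'$ in $P$, which sits as a closed subcone $C_ZP\subset N$, and $0_N^{*}$ is the Gysin pullback along the zero section of $N$. Since $\dim P+\dim P'=\dim Y$, this class lives in $A_0(Z)$, so taking the degree makes sense.

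\emph{Step 2 (clean intersection identifies the cone).} Because $Z\subset P$ is regularly embedded, $C_ZP=N_{Z/P}$ is a vector bundle on $Z$. The cleanness hypothesis $T_zZ=T_zP\cap T_zP'$ says exactly that the induced map
\[
N_{Z/P}\longrightarrow N_{Z/Y}\longrightarrow N_{P'/Y}|_Z
\]
is injective on every fibre. Hence $N_{Z/P}$ is a subbundle of $N$, and the quotient $E=N/N_{Z/P}$ of \eqref{eq:excessbundle} is locally free of rank $e=c-\mathrm{codim}(Z,P)$. Checking that this embedding of cones agrees with the one coming from deformation to the normal cone is a local computation with regular sequences. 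I expect this identification to be the main point requiring care. The approach I would try first is to choose local equations of $P'$ in $Y$ whose restrictions to $P$ cut out $Z$ up to a regular sequence, and compare the two embeddings in those coordinates.

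\emph{Step 3 (Gysin map of a subbundle).} For a subbundle $F\subset N$ on $Z$ one has
\[
0_N^{*}[F]=\chern_{\mathrm{top}}(N/F)\capp[Z].
\]
To prove this, use the exact sequence $0\to F\to N\to N/F\to0$. On the total space of $F$, the pullback of $N/F$ has a section, induced from the tautological section of the pullback of $N$, whose zero scheme is the zero section. Then $0_N^{*}[F]$ equals $0_F^{*}$ applied to the top Chern class of the pulled-back bundle $N/F$ capped with $[F]$. Since $0_F^{*}$ is inverse to flat pullback $A_*(Z)\to A_*(F)$, this gives $\chern_{e}(N/F)\capp[Z]$.

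Applying Step 3 with $F=N_{Z/P}$ gives $P\cdot P'=\deg\bigl(\chern_e(E)\capp[Z]\bigr)$. When $P=P'$ we have $Z=P$, $N_{Z/P}=0$ and $E=N_{P/Y}$, and the same argument is the self-intersection formula.
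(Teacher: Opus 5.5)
Your argument is Fulton's own proof route, which the paper only cites, and it breaks at one concrete point in Step~3. The structure is right: $P\cdot P'=\deg 0_N^{*}[C_ZP]$, identify $C_ZP=N_{Z/P}$ with a subbundle of $N=N_{P'/Y}|_Z$, then evaluate the Gysin pullback of a subbundle.

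The point you flag in Step~2 as delicate is actually automatic. In Fulton's construction the embedding $C_ZP\hookrightarrow N$ is \emph{defined} by the surjection $\mathrm{Sym}(I/I^2\otimes\mathcal O_Z)\to\bigoplus_n J^n/J^{n+1}$, where $J=I\mathcal O_P$. When $Z\subset P$ is regular, this map is determined by its degree-one part $I/I^2\otimes\mathcal O_Z\twoheadrightarrow J/J^2$, and the dual of that is exactly $N_{Z/P}\to N_{P'/Y}|_Z$. No comparison in local coordinates is needed. You should also say that $Z$ in Step~1 is the scheme-theoretic intersection. Cleanness makes it reduced and equal to the smooth $Z$ of the hypothesis, since its Zariski tangent spaces $T_zP\cap T_zP'=T_zZ$ have dimension $\dim Z$.

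The genuine error is in Step~3. On the total space of $F$, the tautological section of $p_F^{*}N$ is $(z,v)\mapsto v$ with $v\in F_z$, so its image in $p_F^{*}(N/F)$ is identically zero. Its zero scheme is all of $F$, not the zero section. No regular section could work there anyway: it would vanish in codimension $e$, whereas the zero section of $F$ has codimension $\mathrm{rank}\,F=\mathrm{codim}(Z,P)$ (in the Hermitian application, $j$ versus $r-1-j$). So your formula $0_N^{*}[F]=0_F^{*}\bigl(\chern_e(p_F^{*}(N/F))\capp[F]\bigr)$ is left unjustified. It is true, being $0_F^{*}j^{*}j_{*}[F]$ for $j\colon F\hookrightarrow N$, but that route uses the self-intersection formula, which is the special case you intend to deduce at the end.

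The repair is to work on the total space of $N$ instead. The image $\bar\tau$ of the tautological section of $p_N^{*}N$ in $p_N^{*}(N/F)$ is a regular section with zero scheme exactly $F$: locally $N\cong F\oplus N/F$, and $\bar\tau$ is the vector of fibre coordinates of $N/F$. Hence $[F]=\chern_e(p_N^{*}(N/F))\capp[N]=p_N^{*}\bigl(\chern_e(N/F)\capp[Z]\bigr)$, and applying $0_N^{*}=(p_N^{*})^{-1}$ gives $0_N^{*}[F]=\chern_e(N/F)\capp[Z]$.

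A shorter route is Fulton's Prop.~6.1(a): $i^{!}[P]=\{\chern(N)\capp s(Z,P)\}_0$ with $s(Z,P)=\chern(N_{Z/P})^{-1}\capp[Z]$. Whitney gives $\chern(N)\chern(N_{Z/P})^{-1}=\chern(E)$, and its part of degree $\dim Z$ is $\chern_e(E)$, since $e=\dim Z$ when $\dim P+\dim P'=\dim Y$.
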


\subsubsection*{(C) Witt's theorem and distance-regular graphs}

\begin{fact}[Witt's theorem for Hermitian forms {\cite[Thm.~7.4]{Taylor}}]
\label{fact:witt}
Any isometry between subspaces of a nondegenerate Hermitian space
extends to an isometry of the whole space. Consequently $\GU_{2r}(q)$
is transitive on pairs of maximal totally isotropic subspaces with a
given intersection dimension \textup{(}cf.\ the proof
of \cite[Thm.~9.4.3]{BCN}\textup{)}.
\end{fact}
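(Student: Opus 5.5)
The first sentence is Witt's theorem itself, which I take from \cite[Thm.~7.4]{Taylor}. Note that this version applies to arbitrary, possibly degenerate, subspaces. What has to be proved is the consequence. Given pairs $(W_1,W_2)$ and $(W_1',W_2')$ of maximal totally isotropic subspaces of $\F_{q^2}^{\,2r}$ with $\dim(W_1\cap W_2)=\dim(W_1'\cap W_2')=j$, the plan is to build an isometry $\phi\colon W_1+W_2\to W_1'+W_2'$ with $\phi(W_1)=W_1'$ and $\phi(W_2)=W_2'$. Witt's theorem then extends $\phi$ to an element of $\GU_{2r}(q)$, which carries the first pair to the second.

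\textbf{Step 1: the pairing between $W_1$ and $W_2$.} Consider the sesquilinear pairing $W_1\times W_2\to\F_{q^2}$ given by $\langle\,,\,\rangle_{\mathrm H}$. Maximality means $W_1^\perp=W_1$: we have $W_1\subseteq W_1^\perp$, and both have dimension $r$ by nondegeneracy. Hence the right kernel of the pairing is $W_2\cap W_1^\perp=W_1\cap W_2$, and symmetrically the left kernel is $W_1\cap W_2$. So the pairing descends to a perfect sesquilinear pairing
\[
  W_1/(W_1\cap W_2)\times W_2/(W_1\cap W_2)\to\F_{q^2},
\]
between spaces of dimension $r-j$.

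\textbf{Step 2: an adapted basis.} Choose a basis $e_1,\dots,e_j$ of $I:=W_1\cap W_2$. Extend it by vectors $f_1,\dots,f_{r-j}$ to a basis of $W_1$. Since the pairing of Step 1 is perfect, we can choose $g_1,\dots,g_{r-j}\in W_2$ lifting the dual basis, so that $\langle f_a,g_b\rangle_{\mathrm H}=\delta_{ab}$; the $e$'s and $g$'s then form a basis of $W_2$. The vectors $e,f,g$ together form a basis of $W_1+W_2$, whose dimension is $2r-j$. In this basis the Gram matrix is completely determined: all pairings vanish (total isotropy, and $I$ pairs trivially with $W_1+W_2$) except $\langle f_a,g_b\rangle_{\mathrm H}=\delta_{ab}$ and its conjugate. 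The only point needing care is that the dual basis is taken with respect to a sesquilinear rather than bilinear pairing, but perfectness is all that is used.

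\textbf{Step 3: conclusion.} Do the same construction for the primed pair. The linear map sending $e_i\mapsto e_i'$, $f_a\mapsto f_a'$, $g_b\mapsto g_b'$ preserves the Gram matrix, so it is an isometry. It maps $W_1$ onto $W_1'$ and $W_2$ onto $W_2'$. Extending it by Witt's theorem finishes the argument. The main obstacle is really only Step 1, the identification of the kernel via $W_1^\perp=W_1$; the rest is bookkeeping.
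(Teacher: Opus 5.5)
Your argument is correct, and it is the standard derivation the paper intends when it writes ``Consequently'' and points to the proof of Thm.~9.4.3 in Brouwer--Cohen--Neumaier; the paper gives no proof of its own beyond these citations. You build an isometry $W_1+W_2\to W_1'+W_2'$ from a basis $e_i,f_a,g_b$ adapted to $W_1\cap W_2$ with $\langle f_a,g_b\rangle_{\mathrm H}=\delta_{ab}$, and extend it by Witt's theorem in its form for possibly degenerate subspaces; this is precisely the step the paper leaves to those references.
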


Let $\Gamma$ be a connected graph. For vertices $u,v$ we write
$\partial(u,v)$ for the \emph{graph distance}, i.e.\ the length of a
shortest path from $u$ to $v$ (so $\partial(u,u)=0$ and
$\partial(u,v)=1$ means $u\sim v$); the \emph{diameter} is the
maximum of $\partial$. We use $\partial$ rather than the more common
$d(\cdot,\cdot)$ because $d$ is reserved throughout for $q+1=\deg X$.

Say that $\Gamma$, of diameter $r$, is \emph{distance-regular} if, for
all $0\le j\le r$, the three numbers
\[
\begin{aligned}
  c_j&=\#\bigl\{w:\ \partial(u,w)=j-1,\ \partial(v,w)=1\bigr\},\\
  a_j&=\#\bigl\{w:\ \partial(u,w)=j,\ \ \ \ \,\partial(v,w)=1\bigr\},\\
  b_j&=\#\bigl\{w:\ \partial(u,w)=j+1,\ \partial(v,w)=1\bigr\}
\end{aligned}
\]
depend only on $j$ and not on the choice of $u,v$ with
$\partial(u,v)=j$. For $0\le j\le r$ let $R_j$ be the distance-$j$
relation,
\[
  R_j\;=\;\bigl\{(u,v)\;:\;\partial(u,v)=j\bigr\},
\]
and let $A_j$ be its \emph{adjacency matrix}: the matrix indexed by the
vertex set of $\Gamma$ with $(A_j)_{u,v}=1$ if $\partial(u,v)=j$ and $0$
otherwise. Thus
$A_0=\mathrm{Id}$ is the identity matrix and $A:=A_1$ is the ordinary
adjacency matrix of $\Gamma$. 
Let $J$ be the square matrix, indexed by the vertex set of $\Gamma$, each of
whose entries is equal to $1$. Since every ordered pair $(u,v)$ lies
in exactly one relation $R_j$, we have
\[
  \sum_{j=0}^{r}A_j\;=\;J .
\]
Distance-regularity is then equivalent to the three-term recurrence
\begin{equation}
\label{eq:3termA}
  A\,A_j\;=\;b_{j-1}A_{j-1}+a_jA_j+c_{j+1}A_{j+1},
  \qquad 0\le j\le r,
\end{equation}
with the conventions $b_{-1}=c_{r+1}=0$, $c_0=0$, $b_r=0$; one has
$a_j=b_0-b_j-c_j$. The array $\{b_0,\dots,b_{r-1};c_1,\dots,c_r\}$ is
the \emph{intersection array}.

\begin{definition}[Valencies]
\label{def:valency}
Let $\Gamma$ be distance-regular of diameter $r$. For $0\le j\le r$
the \emph{$j$-th valency} of $\Gamma$ is
\[
  k_j\;:=\;\#\bigl\{w\in\Gamma\;:\;\partial(u,w)=j\bigr\},
\]
the number of vertices at distance $j$ from a fixed vertex $u$; it does
not depend on $u$. Thus $k_0=1$, $k_1$ is the valency, and
$A_jJ=JA_j=k_jJ$.
\end{definition}

\begin{lemma}[{\cite[\S4.1]{BCN}}]
\label{lem:valency}
$k_1=b_0$, and for $1\le j\le r$
\begin{equation}
\label{eq:valency}
  k_j\;=\;\frac{b_0b_1\cdots b_{j-1}}{c_1c_2\cdots c_j} .
\end{equation}
\end{lemma}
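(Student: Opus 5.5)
The plan is a double count of paths in $\Gamma$. Fix a vertex $u$ and let $\Gamma_j(u)=\{w:\partial(u,w)=j\}$, so that $k_j=\#\Gamma_j(u)$. Plainly $k_0=1$, and $k_1=b_0$ holds because $b_0$ counts the neighbours of $v=u$ at distance $1$ from $u$, i.e.\ all neighbours of $u$. For $1\le j\le r$, count the set of ordered pairs
\[
  E_j\;=\;\bigl\{(v,w)\;:\;v\in\Gamma_{j-1}(u),\ w\in\Gamma_j(u),\ \partial(v,w)=1\bigr\}
\]
in two ways.

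\emph{First count, from $v$.} Fix $v\in\Gamma_{j-1}(u)$, so the pair $(u,v)$ has distance $j-1$. The number of neighbours $w$ of $v$ with $\partial(u,w)=j$ is, by definition, $b_{j-1}$. Hence $\#E_j=k_{j-1}b_{j-1}$.

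\emph{Second count, from $w$.} Fix $w\in\Gamma_j(u)$, so the pair $(u,w)$ has distance $j$. The number of neighbours $v$ of $w$ with $\partial(u,v)=j-1$ is $c_j$. Hence $\#E_j=k_jc_j$.

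Equating the two counts gives $k_j c_j = k_{j-1}b_{j-1}$. To divide by $c_j$ we need $c_j\ge1$ for $1\le j\le r$, which holds because $\Gamma$ is connected of diameter $r$. Indeed, any $w$ at distance $j\le r$ from $u$ lies on a shortest path to $u$, and the neighbour of $w$ on that path is at distance $j-1$ from $u$. (If $\Gamma_j(u)$ were empty the count would read $0=0$, but $\Gamma_j(u)$ is nonempty for $j\le r$ since the diameter is $r$.) Induction on $j$ starting from $k_0=1$ then telescopes to formula \eqref{eq:valency}.

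The same computation is packaged by applying \eqref{eq:3termA} to $J$ using $A_jJ=k_jJ$, which yields $b_0k_j=b_{j-1}k_{j-1}+a_jk_j+c_{j+1}k_{j+1}$. Comparing this with $a_j=b_0-b_j-c_j$ gives the same recursion. I would present the direct double count, since it is self-contained.

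The only delicate point is that $u$ is fixed while the second vertex varies. The definitions of $b_{j-1}$ and $c_j$ involve a pair at the prescribed distance, and distance-regularity guarantees these numbers are independent of the pair. Both counts therefore apply uniformly, and no real obstacle remains.
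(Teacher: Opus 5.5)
Your proposal is correct and is the same double count the paper uses: counting adjacent pairs between $\Gamma_{j-1}(u)$ and $\Gamma_j(u)$ from each side gives $k_{j-1}b_{j-1}=k_jc_j$, and induction from $k_0=1$ yields the formula. Your check that $c_j\ge1$ for $1\le j\le r$ (by connectedness) justifies the division, a step the paper leaves implicit.
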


\begin{proof}
Count in two ways the pairs $(w,w')$ with $\partial(u,w)=j-1$,
$\partial(u,w')=j$ and $w\sim w'$. Counting from $w$ gives
$k_{j-1}b_{j-1}$, counting from $w'$ gives $k_jc_j$; hence
$k_j=k_{j-1}b_{j-1}/c_j$, and \eqref{eq:valency} follows by induction
from $k_0=1$.
\end{proof}

\begin{fact}[Dual polar graphs {\cite[\S9.4]{BCN}}]
\label{fact:dualpolar}
The graph whose vertices are the maximal totally isotropic subspaces of
a finite classical polar space, adjacent when they meet in a hyperplane
of each, is distance-regular of diameter equal to the rank, the
distance-$j$ relation being ``the intersection has codimension $j$''.
\end{fact}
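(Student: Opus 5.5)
The plan is to establish three things in turn: the graph distance between two maximal totally isotropic subspaces $U,W$ equals $j:=r-\dim(U\cap W)$; the diameter is $r$; and the numbers $c_j,a_j,b_j$ depend only on $j$. Here $r$ is the Witt index, i.e.\ the common dimension of maximal totally isotropic subspaces. Throughout I use that $U^{\perp}=U$ for a maximal totally isotropic $U$ in a nondegenerate space; this holds in the Hermitian case $\F_{q^2}^{\,2r}$, which is all the paper needs. For a general polar space one only uses $U\subseteq U^\perp$ together with maximality.

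\emph{Distance equals codimension of the intersection.} For the lower bound, suppose $U\sim U'$, so $U\cap U'$ is a hyperplane of each. Then $U\cap W$ and $U'\cap W$ both contain $U\cap U'\cap W$, which has codimension at most $1$ in each of them. Hence $\dim(U\cap W)$ changes by at most one along an edge. Any path from $W$ to $U$ must therefore have length at least $r-\dim(U\cap W)$.

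For the upper bound, let $I:=U\cap W$ with $\dim I=r-j$ and $j\ge1$. Choose $w\in W\setminus I$, so $w\notin U$. Since $U=U^\perp$, the vector $w$ is not orthogonal to all of $U$, and so $H:=U\cap w^{\perp}$ is a hyperplane of $U$. Put $U':=H+\langle w\rangle$. This space is totally isotropic, because $w$ is isotropic and orthogonal to $H$. It has dimension $r$, so it is maximal and adjacent to $U$. Moreover $I\subseteq H$, since $I\subseteq W\perp w$. Hence $U'\cap W\supseteq I+\langle w\rangle$, which has dimension $r-j+1$. By induction on $j$ we obtain a path of length exactly $j$. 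In particular the graph is connected.

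\emph{Diameter.} The diameter is at most $r$. It equals $r$ because there are two maximal totally isotropic subspaces meeting in $0$. In the Hermitian case, a hyperbolic decomposition of $\F_{q^2}^{\,2r}$ into $r$ hyperbolic planes gives two complementary totally isotropic $r$-spaces.

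\emph{Distance-regularity.} By Fact~\ref{fact:witt}, the isometry group acts transitively on ordered pairs $(U,W)$ with a prescribed value of $\dim(U\cap W)$. By the first step, these are exactly the pairs at a prescribed distance $j$. The isometry group preserves adjacency, so the counts $c_j,a_j,b_j$ attached to a pair at distance $j$ are invariant under it, and hence depend only on $j$. (Explicit values, such as $c_j=(q^{2j}-1)/(q^2-1)$ in the unitary case, could also be obtained by counting hyperplanes and extensions, but they are not needed for the statement.)

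I expect the upper-bound construction to be the step needing most care: checking that $H$ really is a hyperplane and that $U'$ contains $I+\langle w\rangle$. Both points rest only on the identity $U^\perp=U$ and on $I\perp w$.
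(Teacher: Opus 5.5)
Your proof is correct, but it does not follow the paper, which proves nothing here and simply quotes \cite[\S9.4]{BCN}. There the dual polar graphs are shown to be distance-transitive and the intersection array is obtained by counting; the same citation then supplies Lemma~\ref{lem:array}.

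You instead give a self-contained argument. You obtain $\partial(U,W)=r-\dim(U\cap W)$ from two facts: $\dim(U\cap W)$ changes by at most one along an edge, and $(U\cap w^{\perp})+\langle w\rangle$ is an explicit neighbour moving one step closer to $W$. As you note, only maximality of $U$ is needed to get $w\not\perp U$. You get the diameter from a pair of opposite generators, and distance-regularity as a formal consequence of distance-transitivity.

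Your route is shorter and more conceptual, and it uses nothing beyond Witt's theorem. However, it produces no intersection numbers, so the paper would still need \cite{BCN}, or a separate count, for Lemma~\ref{lem:array}. For instance, $c_j$ is the number of hyperplanes of $W$ containing $U\cap W$.

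If you want the argument fully self-contained, one step needs filling in. Transitivity on pairs with a given $\dim(U\cap W)$ is only asserted in Fact~\ref{fact:witt}, with a pointer back to the proof in \cite{BCN}. It does follow from Witt's theorem:
\begin{itemize}
\item By maximality of $W$, $U\cap W^{\perp}=U\cap W$.
\item Hence the induced pairing between $U/(U\cap W)$ and $W/(U\cap W)$ is nondegenerate.
\item Dual bases then give an isometry $U+W\to U'+W'$ carrying $U$ to $U'$ and $W$ to $W'$.
\item Witt's theorem extends this isometry to the whole space.
\end{itemize}
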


\subsubsection*{(D) The $q$-binomial theorem}

For a prime power $Q$ and integers $0\le j\le s$, the
\emph{$Q$-binomial coefficient} is
\[
  \binom{s}{j}_{\!Q}:=\prod_{i=1}^{j}\frac{Q^{\,s-i+1}-1}{Q^{i}-1},
\]
the number of $j$-dimensional subspaces of $\F_{Q}^{\,s}$. It is used
here only through the following identity, which in turn is used only
in the proof of Lemma~\textup{\ref{lem:sumkj}}.

\begin{fact}[Gauss; {\cite[Thm.~3.3]{Andrews}}]
\label{fact:qbinom}
For an indeterminate $x$ and every $s\ge0$,
\[
  \prod_{i=0}^{s-1}\bigl(1+Q^{i}x\bigr)
  \;=\;\sum_{j=0}^{s}\binom{s}{j}_{\!Q}Q^{\binom j2}x^{j}.
\]
\end{fact}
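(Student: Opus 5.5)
We prove the identity by induction on $s$, treating both sides as polynomials in $x$ with rational coefficients and $Q$ as a fixed number with $Q\ne1$ (only the product formula defining $\binom{s}{j}_{\!Q}$ is used, not its subspace-counting meaning). For $s=0$ both sides equal $1$, since the empty product is $1$ and $\binom00_{\!Q}=1$. The inductive step multiplies the identity for $s$ by $(1+Q^{s}x)$ and identifies the coefficients of $x^{j}$ by a $Q$-Pascal rule.

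\emph{Step 1: the $Q$-Pascal rule.} We first show that for $1\le j\le s$,
\[
  \binom{s+1}{j}_{\!Q}\;=\;\binom{s}{j}_{\!Q}+Q^{\,s-j+1}\binom{s}{j-1}_{\!Q}.
\]
Directly from the product definition,
\[
  \binom{s+1}{j}_{\!Q}=\binom{s}{j-1}_{\!Q}\frac{Q^{s+1}-1}{Q^{j}-1},
  \qquad
  \binom{s}{j}_{\!Q}=\binom{s}{j-1}_{\!Q}\frac{Q^{s-j+1}-1}{Q^{j}-1}.
\]
Subtracting these, the difference is $\binom{s}{j-1}_{\!Q}$ times
\[
  \frac{Q^{s+1}-Q^{s-j+1}}{Q^{j}-1}=Q^{\,s-j+1}.
\]
For the edge cases we use the conventions $\binom{s}{s+1}_{\!Q}=\binom{s}{-1}_{\!Q}=0$. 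With these, the rule also holds for $j=0$ and for $j=s+1$, where it reads $1=Q^{0}\cdot1$.

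\emph{Step 2: the inductive step.} Assume the identity for $s$. Then
\[
  \prod_{i=0}^{s}(1+Q^{i}x)=(1+Q^{s}x)\sum_{j}\binom{s}{j}_{\!Q}Q^{\binom j2}x^{j}.
\]
The coefficient of $x^{j}$ on the right is
\[
  \binom{s}{j}_{\!Q}Q^{\binom j2}+Q^{s}\binom{s}{j-1}_{\!Q}Q^{\binom{j-1}2}.
\]
Since $\binom j2-\binom{j-1}2=j-1$, we have $Q^{s}Q^{\binom{j-1}2}=Q^{\binom j2}Q^{\,s-j+1}$. Hence the coefficient equals
\[
  Q^{\binom j2}\Bigl(\binom{s}{j}_{\!Q}+Q^{\,s-j+1}\binom{s}{j-1}_{\!Q}\Bigr)=Q^{\binom j2}\binom{s+1}{j}_{\!Q}
\]
by Step~1, which is the identity for $s+1$.

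The only step needing care is the exponent bookkeeping in Step~2, namely that the extra factor $Q^{s}$ combines with $Q^{\binom{j-1}2}$ to give exactly $Q^{\binom j2}Q^{\,s-j+1}$, together with the boundary conventions in Step~1. Both are routine checks; there is no real obstacle.
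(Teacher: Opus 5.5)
Your proof is correct. The $Q$-Pascal rule $\binom{s+1}{j}_{\!Q}=\binom{s}{j}_{\!Q}+Q^{\,s-j+1}\binom{s}{j-1}_{\!Q}$ follows from the product formula exactly as you compute. The exponent bookkeeping $Q^{s}Q^{\binom{j-1}{2}}=Q^{\binom j2}Q^{\,s-j+1}$ is right, and the boundary coefficients $j=0$ and $j=s+1$ are handled.

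The paper gives no proof of this Fact; it simply imports it from \cite[Thm.~3.3]{Andrews}. So your induction is not a competing route but a self-contained replacement for the citation. Its advantage is that it uses only the product definition of $\binom{s}{j}_{\!Q}$ that the paper actually states, and never the subspace-counting interpretation. With it, Lemma~\ref{lem:sumkj} becomes completely elementary; the citation only saves space.

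Your identity is proved as a polynomial identity in $x$, so the specialisations $x=q,\,q^{-1},\,-1$ made in Lemma~\ref{lem:sumkj} are legitimate.

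One small imprecision: the hypothesis you need is not $Q\ne1$ but $Q^{i}\ne1$ for all $i\ge1$, so that the denominators $Q^{i}-1$ never vanish. For example, $Q=-1$ would break the definition. This is automatic here, since $Q$ is a prime power.
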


\section{The Hermitian variety, its generators, and the two forms}
\label{sec:setup}

From here on everything is proved. The goal of this section is to put
in place the two bilinear forms that the rest of the argument
compares: the Poincar\'e pairing $\langle\,,\,\rangle$ on the middle
cohomology of $X$ (\S\ref{subsec:middle}), and its pullback to
$\Ql^{\mathcal G}$, whose Gram matrix $M$ is computed in
\S\ref{subsec:intnum} and identified in \S\ref{subsec:gram} as a
polynomial in a single symmetric matrix.

\subsection{Generators and the dual polar graph}
\label{subsec:generators}

We recall the Hermitian variety $\Herm(2r-1,q^2)$ and its generators,
the maximal $\F_{q^2}$-rational linear subspaces contained in it. The
graph on the generators is the dual polar graph ${}^2A_{2r-1}(q)$, and
we compute its intersection array, its valencies and two sums of
valencies needed later.

\begin{definition}
\label{def:herm}
The \emph{Hermitian variety} $\Herm(2r-1,q^2)$ is
the hypersurface
\[
 \sum_{i=0}^{2r-1}x_i^{\,q+1}=0
  \ \subset\ \mathbb P^{2r-1},
\]
defined over $\F_{q^2}$.
\end{definition}

Since $q+1\equiv1\pmod p$, the partial derivatives $(q+1)x_i^{\,q}$
vanish simultaneously only at the origin; hence $\Herm(2r-1,q^2)$ is
smooth over $\F_{q^2}$ in every characteristic $p$, of dimension $2r-2$. 

The form $\langle x,y\rangle_{\mathrm{H}}=\sum_i x_i\bar y_i$ with
$\bar x=x^{q}$ is a nondegenerate Hermitian form on $\F_{q^2}^{2r}$
relative to the involution $x\mapsto x^{q}$ of $\F_{q^2}/\F_q$, and
$\Herm(2r-1,q^2)$ is exactly its isotropic locus. Its automorphism
group, as a polarity-preserving subgroup of the ambient
$\mathbb P^{2r-1}$, is $\PGU_{2r}(q)$.

\begin{definition}
\label{def:generator}
A \emph{generator} of $\Herm(2r-1,q^2)$ is an $\F_{q^{2}}$-rational
linear subspace $P\subset\mathbb P^{2r-1}$ whose corresponding subspace
$\widetilde P$ of $\F_{q^2}^{2r}$
is totally isotropic for the Hermitian form, of the maximal possible
dimension, i.e., $\dim_{\F_{q^2}}\widetilde P=r$, so that
$\dim P=r-1$. Note $P\subset\Herm(2r-1,q^2)$
\textup{(}Lemma~\textup{\ref{lem:contained}}\textup{)}. Write $\mathcal G$ for the set of
generators.
\end{definition}

\begin{proposition}[classical; see {\cite[Def.~2.17, Lem.~2.18, Thm.~2.19]{HT}},
{\cite{Segre}}, {\cite{BoseChakravarti}}]
\label{prop:generators}
$\PGU_{2r}(q)$ acts transitively on $\mathcal G$, and
\[
  |\mathcal G|\;=\;N_r\;=\;\prod_{i=1}^{r}\bigl(q^{2i-1}+1\bigr).
\]
The graph on $\mathcal G$ with $P\sim P'$ iff $\dim(P\cap P')=r-2$ is
the dual polar graph of type ${}^{2}A_{2r-1}(q)$: it is
distance-regular of diameter $r$, its distance-$j$ relation being
\[
  R_j\;=\;\bigl\{(P,P')\;:\;\dim(P\cap P')=r-1-j\bigr\},
  \qquad 0\le j\le r .
\]
\end{proposition}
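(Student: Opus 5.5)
The plan is to derive all three assertions from Witt's theorem (Fact~\ref{fact:witt}), Fact~\ref{fact:dualpolar}, and a direct count. First, transitivity. Any two maximal totally isotropic subspaces $\widetilde P,\widetilde P'$ of dimension $r$ are isometric: the Hermitian form vanishes identically on both, so any linear isomorphism between them is an isometry. By Witt's theorem this isometry extends to an element of $\GU_{2r}(q)$, and passing to the quotient gives transitivity of $\PGU_{2r}(q)$ on $\mathcal G$. The same argument applied to a pair, together with the second half of Fact~\ref{fact:witt}, shows that $\GU_{2r}(q)$ is transitive on the pairs $(P,P')$ with a given $\dim(P\cap P')$.

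Second, the count $|\mathcal G|=N_r$. We argue by induction on $r$, counting flags $(x,\widetilde P)$ in which $x$ is an isotropic point and $\widetilde P\ni x$ is a generator.
\begin{itemize}
\item The number of isotropic points of the nondegenerate Hermitian space of dimension $n$ is
\[
\frac{(q^{n}-(-1)^{n})(q^{n-1}-(-1)^{n-1})}{q^2-1},
\]
either classically or by a short induction splitting off a hyperbolic plane.
\item Generators through a fixed isotropic point $x$ correspond bijectively to generators of $x^\perp/x$. This quotient is nondegenerate Hermitian of dimension $2r-2$, so by induction they number $N_{r-1}$.
\item Each generator contains $(q^{2r}-1)/(q^2-1)$ points.
\end{itemize}
Double counting then gives
\[
|\mathcal G|=N_{r-1}\cdot\frac{(q^{2r}-1)(q^{2r-1}+1)}{q^2-1}\cdot\frac{q^2-1}{q^{2r}-1}=N_{r-1}(q^{2r-1}+1)=N_r,
\]
starting from $N_0=1$: the rank-$0$ space has one generator, namely $0$. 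One could also simply cite the classical references given in the statement.

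Third, the graph structure. The dual polar graph of the Hermitian polar space of rank $r$ on $\F_{q^2}^{2r}$ has as vertices the generators, with adjacency meaning that they meet in a hyperplane of each, i.e.\ $\dim_{\F_{q^2}}(\widetilde P\cap\widetilde P')=r-1$, i.e.\ $\dim(P\cap P')=r-2$. This is exactly the graph in the statement. Fact~\ref{fact:dualpolar} then gives distance-regularity of diameter $r$, with distance-$j$ relation ``$\widetilde P\cap\widetilde P'$ has codimension $j$''. Translated to projective dimensions this is $\dim(P\cap P')=r-1-j$, which is the description of $R_j$.

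If one wanted to avoid the citation in this step, the main point to verify is that the graph distance equals $r-\dim(\widetilde P\cap\widetilde P')$. The inequality $\partial\ge r-\dim_{\F_{q^2}}(\widetilde P\cap\widetilde P')$ is immediate, since one step lowers the intersection dimension by at most one. For the reverse inequality, given $\widetilde P\ne\widetilde P'$, pick a hyperplane $H\supset\widetilde P\cap\widetilde P'$ of $\widetilde P$. Then $\widetilde P''=H+(H^\perp\cap\widetilde P')$ is a generator adjacent to $\widetilde P$ and meeting $\widetilde P'$ in a larger subspace, and this requires a short dimension check using nondegeneracy. With the transitivity on pairs established above, the parameters $b_j,c_j$ depend only on $j$, which gives distance-regularity. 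I expect this reverse-inequality step, if done by hand, to be the only nonroutine part; everything else is counting.
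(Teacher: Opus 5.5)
Your proposal is correct and follows the paper's proof, which consists of the same three ingredients: Witt's theorem (Fact~\ref{fact:witt}) for transitivity, the classical count \cite[Thm.~2.19]{HT}, and Fact~\ref{fact:dualpolar} for distance-regularity with $R_j$ read off from the codimension of $\widetilde P\cap\widetilde P'$. The differences are in what you prove by hand. You derive $|\mathcal G|=N_r$ by double counting point--generator flags rather than citing it, which is a valid self-contained substitute provided you run the induction over all nondegenerate Hermitian spaces of even dimension, or note that such forms of a given dimension over a finite field are all isometric (since $x^\perp/x$ is not literally the standard form on $\F_{q^2}^{2r-2}$). Your optional hand check of $\partial=r-\dim_{\F_{q^2}}(\widetilde P\cap\widetilde P')$ also goes through: with $k=\dim_{\F_{q^2}}(\widetilde P\cap\widetilde P')$ one has $\dim(H^\perp\cap\widetilde P')=\dim(H+\widetilde P')^{\perp}=k+1$, using $(\widetilde P')^{\perp}=\widetilde P'$.
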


\begin{proof}
Transitivity is Fact~\ref{fact:witt}; the count is the classical one
\cite[Thm.~2.19]{HT}; distance-regularity is Fact~\ref{fact:dualpolar}.
\end{proof}

We write $A_j$ for the adjacency matrix of the relation $R_j$ in the
sense of \S\ref{subsec:imports}(C), and $A:=A_1$, so that
$A_0=\mathrm{Id}$ and $\sum_{j=0}^{r}A_j=J$, the matrix all of whose
entries equal $1$ (\S\ref{subsec:imports}(C)), and
the recurrence \eqref{eq:3termA} holds. Note that, for
$P,P'\in\mathcal G$, the graph distance is
$\partial(P,P')=r-1-\dim(P\cap P')$.

Lemma~\ref{lem:array} below is \cite[Thm.~9.4.3]{BCN}
\textup{(}case ${}^2A_{2d-1}(r)$ there, with $e=\frac12$, $d=r$, and
our $q^{2}$ in place of their $q$\textup{)}, and the value of $N_r$ in
the first identity of Lemma~\ref{lem:sumkj} is the case $k=r$ of
\cite[Lem.~9.4.1]{BCN}.

\begin{lemma}[Intersection array of ${}^2A_{2r-1}(q)$]
\label{lem:array}
For $0\le i\le r$,
\[
  c_i\;=\;\frac{q^{2i}-1}{q^{2}-1},\qquad
  b_i\;=\;q^{\,2i+1}\,\frac{q^{2(r-i)}-1}{q^{2}-1},
\]
with $c_0=0$ and $b_r=0$. In particular
$b_0=q\,\dfrac{q^{2r}-1}{q^{2}-1}$ is the valency. Moreover
\begin{equation}
\label{eq:aisc}
  a_i\;=\;(q-1)\,c_i \qquad (0\le i\le r).
\end{equation}
\end{lemma}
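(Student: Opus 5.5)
The statement has two parts. The formulas for $c_i$ and $b_i$ are quoted from \cite[Thm.~9.4.3]{BCN}. The identity \eqref{eq:aisc} then follows from $a_i=b_0-b_i-c_i$. I would still sketch a direct count of $c_i$ and $b_i$, to make the translation of parameters transparent. By Fact~\ref{fact:witt} the counts depend only on $i$.

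\textbf{Counting $c_i$.} Fix generators $P,P'$ at distance $i$. Set $U=\widetilde P\cap\widetilde P'$, so $\dim U=r-i$. We count the generators $Q$ with $Q\sim P'$ and $\partial(P,Q)=i-1$. Such a $Q$ satisfies $\widetilde Q\supset H$, where $H$ is a hyperplane of $\widetilde P'$ containing $U$. The space $H^\perp/H$ is a nondegenerate Hermitian plane. The two generators through $H$ other than $P'$ correspond to the isotropic points of that plane. There are $q+1$ such points in total, so $q$ of them give a $Q\ne P'$.

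For such a $Q$, the condition $\dim(\widetilde Q\cap\widetilde P)=r-i+1$ singles out a unique choice. Indeed, $\widetilde Q$ must contain $H$ together with a vector of $\widetilde P\cap H^\perp$ outside $H$, and exactly one point of the plane comes from $\widetilde P$. The number of hyperplanes $H$ of $\widetilde P'$ containing $U$ is $(q^{2i}-1)/(q^2-1)$, which gives $c_i$.

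\textbf{Counting $b_i$.} Here we need $\widetilde Q\cap\widetilde P=H\cap U$ to have dimension $r-i-1$. So $H$ must not contain $U$. The number of hyperplanes of $\widetilde P'$ not containing $U$ is
\[
  \frac{q^{2r}-1}{q^2-1}-\frac{q^{2i}-1}{q^2-1}=q^{2i}\,\frac{q^{2(r-i)}-1}{q^2-1}.
\]
For each such $H$, I expect all $q$ generators $Q\ne P'$ through $H$ to qualify. This gives $b_i=q^{2i+1}(q^{2(r-i)}-1)/(q^2-1)$, which matches the stated formula, and in particular recovers $b_0$.

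\textbf{The identity \eqref{eq:aisc}.} Substitute the formulas into $a_i=b_0-b_i-c_i$ and multiply through by $q^2-1$. This gives
\[
  (q^2-1)a_i=q^{2r+1}-q-q^{2r+1}+q^{2i+1}-q^{2i}+1=q^{2i}(q-1)-(q-1).
\]
Hence $(q^2-1)a_i=(q-1)(q^{2i}-1)$, that is, $a_i=(q-1)c_i$. For $i=0$ both sides vanish, since $c_0=0$. For $i=r$ the same algebra goes through with $b_r=0$.

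The only real obstacle is checking the parameter dictionary with \cite{BCN}: their $q$ is our $q^2$, and $e=\tfrac12$. Substituting these, their $c_i=[i]$ and $b_i=q^{2(i+e)}[r-i]$ become $q^{2i+1}$ times a Gaussian integer in base $q^2$, which is exactly our formula. The direct count above serves as an independent check of this translation.
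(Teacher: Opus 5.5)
Your proposal is correct and matches the paper's proof: the values of $c_i,b_i$ are quoted from \cite[Thm.~9.4.3]{BCN}, and your parameter dictionary (their $q$ is our $q^2$, $e=\tfrac12$) is right. Likewise, \eqref{eq:aisc} follows by substituting into $a_i=b_0-b_i-c_i$, exactly as you compute via $(q^2-1)a_i=(q-1)(q^{2i}-1)$.

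The direct count is an optional extra; if you keep it, correct the slip ``the two generators through $H$'', since there are $q+1$ such generators in all. Also replace ``I expect'' by an argument: for $H\not\supseteq U$ one has $\widetilde P\cap H^\perp=U$, whose image in $H^\perp/H$ is the point of $P'$, so every $Q\ne P'$ through $H$ meets $\widetilde P$ exactly in $H\cap U$.
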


\begin{proof}
The values of $c_i,b_i$ are \cite[Thm.~9.4.3]{BCN}. For \eqref{eq:aisc},
substitute them into $a_i=b_0-b_i-c_i$.
\end{proof}

\begin{lemma}[Valencies of ${}^2A_{2r-1}(q)$]
\label{lem:kj}
Fix $P\in\mathcal G$ and $0\le j\le r$. The $j$-th valency of the dual
polar graph --- that is \textup{(}Definition~\ref{def:valency}\textup{)},
the number of $P'\in\mathcal G$ with $\partial(P,P')=j$, equivalently
with $\dim(P\cap P')=r-1-j$ --- equals
\[
  k_j\;=\;q^{\,j^{2}}\binom{r}{j}_{\!q^{2}} .
\]
For instance $k_0=1$ and $k_1=b_0=q\,\frac{q^{2r}-1}{q^{2}-1}$.
\end{lemma}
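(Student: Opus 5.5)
The plan is to apply Lemma~\ref{lem:valency} directly, using the intersection array of Lemma~\ref{lem:array}. The valency $k_j$ is the product $b_0b_1\cdots b_{j-1}$ divided by $c_1\cdots c_j$, so it suffices to evaluate both products and recognise the $q^2$-binomial coefficient.

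For the numerator, Lemma~\ref{lem:array} gives, for $0\le i\le j-1$,
\[
  b_i=q^{2i+1}\frac{q^{2(r-i)}-1}{q^{2}-1}.
\]
Collecting the powers of $q$ first, we get $\sum_{i=0}^{j-1}(2i+1)=j^2$. This yields
\[
  b_0\cdots b_{j-1}=q^{j^2}\prod_{i=0}^{j-1}\frac{q^{2(r-i)}-1}{q^2-1}.
\]
For the denominator, $c_i=(q^{2i}-1)/(q^2-1)$, so
\[
  c_1\cdots c_j=\prod_{i=1}^{j}\frac{q^{2i}-1}{q^2-1}.
\]
The factors $(q^2-1)^j$ cancel between the two products. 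Reindexing the numerator by $i\mapsto i+1$ turns $\prod_{i=0}^{j-1}(q^{2(r-i)}-1)$ into $\prod_{i=1}^{j}(Q^{r-i+1}-1)$ with $Q=q^2$. The quotient is then exactly the definition of $\binom{r}{j}_{Q}$ from \S\ref{subsec:imports}(D), so $k_j=q^{j^2}\binom{r}{j}_{q^2}$.

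The case $j=0$ is the empty product, giving $k_0=1$. For $j=1$ the formula reads $q\,(q^{2r}-1)/(q^2-1)=b_0$, which agrees with Lemma~\ref{lem:array}. The equivalence of the two descriptions of $k_j$ is the identification $\partial(P,P')=r-1-\dim(P\cap P')$ from Proposition~\ref{prop:generators}. There is no genuine obstacle: the only care needed is the exponent sum $j^2$ and the index shift.
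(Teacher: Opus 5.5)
Your proposal is correct and matches the paper's proof: the paper likewise substitutes the intersection array of Lemma~\ref{lem:array} into \eqref{eq:valency}, collects the power $q^{\sum_{i=0}^{j-1}(2i+1)}=q^{j^2}$, cancels the factors $(q^2-1)^j$, and reindexes to recognise $\binom{r}{j}_{q^2}$. Your checks of $k_0=1$ and $k_1=b_0$ are also right.
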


\begin{proof}
Substitute Lemma~\ref{lem:array} into \eqref{eq:valency}:
\[
  k_j=\frac{q^{\sum_{i=0}^{j-1}(2i+1)}
  \prod_{i=0}^{j-1}\frac{q^{2(r-i)}-1}{q^{2}-1}}
  {\prod_{i=1}^{j}\frac{q^{2i}-1}{q^{2}-1}}
  =q^{\,j^{2}}\prod_{i=1}^{j}\frac{q^{2(r-i+1)}-1}{q^{2i}-1}
  =q^{\,j^{2}}\binom{r}{j}_{\!q^{2}} . \qedhere
\]
\end{proof}

\begin{lemma}
\label{lem:sumkj}
For every $r\ge1$,
\[
  \sum_{j=0}^{r}k_j=N_r,
  \qquad
  \sum_{j=0}^{r}k_j\,q^{-2j}=\frac{q+1}{q}\,N_{r-1},
  \qquad
  \sum_{j=0}^{r}k_j\,(-q)^{\,r-j}=0 .
\]
\end{lemma}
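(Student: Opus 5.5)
All three identities reduce to Fact~\ref{fact:qbinom} with $Q=q^{2}$, after substituting the valencies $k_j=q^{j^2}\binom{r}{j}_{q^2}$ from Lemma~\ref{lem:kj}. The key observation is that $Q^{\binom j2}=q^{j^2-j}$. Hence
\[
  k_j\,x_0^{\,j}\;=\;\binom{r}{j}_{\!q^{2}}\,q^{j^2-j}\,(q x_0)^{j},
\]
so for any value $x_0$ we get
\[
  \sum_j k_j x_0^{\,j}=\prod_{i=0}^{r-1}\bigl(1+q^{2i}\cdot q x_0\bigr)=\prod_{i=0}^{r-1}\bigl(1+q^{2i+1}x_0\bigr).
\]
Each identity is then this product evaluated at a suitable $x_0$.

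First identity: take $x_0=1$. The product is $\prod_{i=0}^{r-1}(1+q^{2i+1})=\prod_{i=1}^{r}(q^{2i-1}+1)=N_r$.

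Second identity: take $x_0=q^{-2}$. The product is $\prod_{i=0}^{r-1}(1+q^{2i-1})$. The factor $i=0$ is $1+q^{-1}=(q+1)/q$. The remaining factors are $\prod_{i=1}^{r-1}(1+q^{2i-1})=N_{r-1}$, which gives $\frac{q+1}{q}N_{r-1}$.

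Third identity: write $(-q)^{r-j}=(-q)^r(-q)^{-j}$ and take $x_0=(-q)^{-1}$. Then $q x_0=-1$, so the $i=0$ factor $1+q^{0}\cdot(-1)$ vanishes and the whole sum is $0$. This uses $r\ge1$, so that the factor $i=0$ is present.

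The only point needing care is matching the exponents $Q^{\binom j2}$ against $q^{j^2}$ so that the leftover power $q^{j}$ can be absorbed into $x$. Once that is done, each identity is a one-line evaluation; I expect no real obstacle beyond keeping the index shifts in $N_r$ and $N_{r-1}$ straight.
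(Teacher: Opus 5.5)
Your proposal is correct and is the paper's proof: both apply Gauss's $q$-binomial theorem with $Q=q^2$, using $Q^{\binom j2}=q^{j^2-j}$ to absorb the extra factor $q^j$. Your evaluations $x_0=1,\ q^{-2},\ (-q)^{-1}$ are exactly the paper's substitutions $x=q,\ q^{-1},\ -1$, written in the rescaled variable $x=qx_0$.
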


\begin{proof}
All three are instances of the $q$-binomial theorem
(Fact~\ref{fact:qbinom}) with $Q=q^{2}$, so that
$Q^{\binom j2}=q^{\,j^{2}-j}$ and
$\binom rj_{q^2}Q^{\binom j2}x^{j}=k_j\,q^{-j}x^{j}$.

Taking $x=q$ gives $\sum_{j=0}^{r}k_j$ on the right and
$\prod_{i=0}^{r-1}(1+q^{2i+1})=\prod_{i=1}^{r}(1+q^{2i-1})=N_r$ on the
left.

Taking $x=q^{-1}$ gives $\sum_{j=0}^{r}k_j q^{-2j}$ on the right and
$\prod_{i=0}^{r-1}(1+q^{2i-1})=(1+q^{-1})\prod_{i=1}^{r-1}(1+q^{2i-1})
=\frac{q+1}{q}N_{r-1}$ on the left.

Taking $x=-1$ gives $\sum_{j=0}^{r}(-1)^{j}q^{\,j^{2}-j}\binom rj_{q^{2}}$ on
the right and $\prod_{i=0}^{r-1}(1-q^{2i})=0$ on the left, the factor
$i=0$ vanishing. Multiplying by $(-q)^{r}$ and using
$k_j=q^{\,j^{2}}\binom rj_{q^{2}}$ turns the right side into
$\sum_{j=0}^{r}k_j(-q)^{\,r-j}$.
\end{proof}

\subsection{The middle cohomology and the pairing $\langle\,,\,\rangle$}
\label{subsec:middle}

Write $X=\Herm(2r-1,q^2)$ from now on, $n=\dim X=2r-2$, and let $L\in
H^{2}(X,\Ql(1))$ be the hyperplane class. Define
\begin{equation}
\label{eq:pairingdef}
  \langle a,b\rangle\;:=\;\deg\bigl(a\cupp b\bigr),
  \qquad a,b\in H^{2r-2}\bigl(X,\Ql(r-1)\bigr),
\end{equation}
the Poincar\'e pairing on the middle cohomology; it is symmetric
(middle degree is even) and perfect (Fact~\ref{fact:PD}).

We recall the special case $Y=X$ of
Proposition~\ref{prop:belowmiddle}, in the form in which it will be
used.

\begin{proposition}[Lefschetz decomposition for $X$]
\label{prop:lefschetz}
Let $r\ge2$ and let $q$ be any prime power. Then\textup{:}
\begin{enumerate}
\item[\textup{(i)}] $H^{\mathrm{odd}}(X,\Ql)=0$, and the group
  $H^{2k}(X,\Ql(k))=\Ql\cdot L^{k}$ is one-dimensional for every
  $k$ with $0\le k\le 2r-2$ and $k\ne r-1$.
\item[\textup{(ii)}] $\langle L^{r-1},L^{r-1}\rangle=\deg(L^{2r-2})
  =\deg X=q+1>0$.
\item[\textup{(iii)}] $\mathcal P^{2r-2}(X)={}$$\ker\bigl(L:H^{2r-2}(X,\Ql(r-1))
  \to H^{2r}(X,\Ql(r))\bigr)=\bigl(L^{r-1}\bigr)^{\perp}$.
\item[\textup{(iv)}] The pairing restricts to a nondegenerate form on
  each summand of the orthogonal decomposition
  \[
    H^{2r-2}\bigl(X,\Ql(r-1)\bigr)\;=\;\Ql\cdot L^{r-1}
    \;\perp\;\mathcal P^{2r-2}(X),
  \]
  where
  \[
    \dim \mathcal P^{2r-2}(X)\;=\;\frac{q(q^{2r-1}+1)}{q+1}.
  \]
\end{enumerate}
\end{proposition}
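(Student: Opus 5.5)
Parts (i), (iii) and the orthogonal decomposition in (iv) are the case $Y=X$, $n=2r-2$, $m=r-1$ of Proposition~\ref{prop:belowmiddle}: $X$ is a nonsingular hypersurface of degree $d=q+1$ in $\mathbb P^{2r-1}$, as noted after Definition~\ref{def:herm}. Part (i) is Proposition~\ref{prop:belowmiddle}(ii), part (iii) is (iv) there, and the nondegeneracy on each summand is (v) there. So no new argument is needed for these beyond the smoothness check already recorded.

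For (ii), I would compute directly that $\deg(L^{2r-2})$ is the degree of $X$ as a subvariety of $\mathbb P^{2r-1}$. Indeed $L^{2r-2}$ is the class of the intersection of $X$ with a general linear subspace of codimension $2r-2$, i.e.\ with a line. Such a line meets the degree-$(q+1)$ hypersurface in $q+1$ points counted with multiplicity, so $\deg L^{2r-2}=q+1>0$. This is Proposition~\ref{prop:belowmiddle}(iii) with $\deg Y=d$.

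The only genuinely new content is the dimension formula in (iv). Since $\Ql\cdot L^{r-1}$ is a nondegenerate summand, $\dim\mathcal P^{2r-2}(X)=\beta_{2r-2}(X)-1$. I would plug $n=2r-2$ (even) and $d=q+1$ into \eqref{eq:classicalbetti} from Fact~\ref{fact:chern}:
\[
\beta_{2r-2}=\frac{q^{2r}+q}{q+1}+1 .
\]
Subtracting $1$ gives $\frac{q(q^{2r-1}+1)}{q+1}$, as claimed. The integrality of this expression is automatic, since $q^{2r-1}\equiv-1\pmod{q+1}$.

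I expect no step to be a real obstacle. The one point to double-check is that Fact~\ref{fact:chern} holds in characteristic $p$ with $p\mid q$. This is fine, because the Betti numbers of smooth hypersurfaces depend only on $(n,d)$, by specialization from characteristic $0$ (\cite[Exp.~XI]{SGA7}), and $\ell\ne p$.
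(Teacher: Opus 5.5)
Your proposal is correct and matches the paper's proof: items (i)--(iv) are read off from Proposition~\ref{prop:belowmiddle} with $Y=X$, $n=2r-2$, $m=r-1$ and $\deg X=q+1$, and $\dim\mathcal P^{2r-2}(X)=\beta_{2r-2}(X)-1$ is computed from \eqref{eq:classicalbetti} with $d=q+1$. Your check on characteristic $p$ is already built into Fact~\ref{fact:chern}, which is stated for any algebraically closed field of characteristic $\ne\ell$.
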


\begin{proof}
This is Proposition~\ref{prop:belowmiddle} for $Y=X$, $n=2r-2$,
$m=r-1$, together with $\deg X=q+1$ \textup{(}items \textup{(i)--(iv)}
being Proposition~\ref{prop:belowmiddle}\textup{(ii)--(v)}\textup{)}. By
Fact~\ref{fact:chern} with $d=q+1$ and $n=2r-2$,
\begin{equation}
\label{eq:bettiherm}
  \beta_{2r-2}(X)\;=\;\frac{q^{2r}+q}{q+1}+1\;=\;\frac{q\,(q^{2r-1}+1)}{q+1}+1 ,
\end{equation}
and $\dim\mathcal P^{2r-2}(X)=\beta_{2r-2}(X)-1$.
\end{proof}

\subsection{Intersection numbers of generators}
\label{subsec:intnum}

The formula for the intersection numbers of two generators obtained in this
subsection (Lemma~\ref{lem:f}) is \cite[(4.1)]{Shimada2001}; we include
the derivation for completeness.

We first record, for later reference, the three immediate properties
of a generator that make its class available at all.

\begin{lemma}
\label{lem:contained}
Let $P\in\mathcal G$. Then \textup{(i)} $P\subset X$; \textup{(ii)}
$P$ is defined over $\F_{q^2}$; \textup{(iii)}
$\mathrm{codim}_XP=r-1$, so that
$[P]\in H^{2r-2}(X,\Ql(r-1))$ lies in middle cohomology.
\end{lemma}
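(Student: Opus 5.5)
The plan is to verify the three items directly from the definitions, since each is essentially a matter of unwinding them.

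For (i), take a point $x\in P(K)$, that is, a nonzero vector $v\in\widehat P=\widetilde P\otimes_{\F_{q^2}}K$. Choose an $\F_{q^2}$-basis $w_1,\dots,w_r$ of $\widetilde P$ and write $v=\sum_k\lambda_k w_k$ with $\lambda_k\in K$. The defining polynomial of $X$ is $F(x)=\sum_i x_i^{q+1}=\sum_i x_i\cdot x_i^{q}$. Because the Frobenius $x\mapsto x^q$ is additive on $K$ and fixes nothing in particular but sends $w_{k,i}$ to $\bar w_{k,i}$ (as $w_{k,i}\in\F_{q^2}$), we get $v_i^{q}=\sum_l\lambda_l^{q}\bar w_{l,i}$. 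Hence
\[
  F(v)=\sum_{k,l}\lambda_k\lambda_l^{q}\sum_i w_{k,i}\bar w_{l,i}
  =\sum_{k,l}\lambda_k\lambda_l^{q}\,\langle w_k,w_l\rangle_{\mathrm H}=0,
\]
since $\widetilde P$ is totally isotropic. So $P(K)\subset X(K)$. Because $P$ is reduced (it is a linear subspace), this gives $P\subset X$ scheme-theoretically.

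Item (ii) holds by definition: $P$ is cut out by linear forms with coefficients in $\F_{q^2}$, namely the annihilator of $\widetilde P$. Item (iii) is a dimension count: $\dim P=r-1$ and $\dim X=2r-2$, so $\mathrm{codim}_XP=r-1$. The cycle class map (Fact~\ref{fact:cycle}) then places $[P]$ in $H^{2(r-1)}(X,\Ql(r-1))$, which is the middle degree.

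The only point needing any care is (i). A naive argument would be to say that $F$ vanishes on $\F_{q^2}$-points of $P$, but that is not enough, because $P$ has more $K$-points than $\F_{q^2}$-points. The fix is the sesquilinear expansion above, valid for arbitrary $K$-coefficients $\lambda_k$, which uses that $q$-th powering is a ring endomorphism of $K$ fixing $\F_q$.
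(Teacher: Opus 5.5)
Your proposal is correct and is essentially the paper's proof: pick an $\F_{q^2}$-basis of $\widetilde P$, expand $\sum_j v_j^{q+1}$ sesquilinearly with arbitrary $K$-coefficients to get $\sum_{k,l}\lambda_k\lambda_l^{q}\langle w_k,w_l\rangle_{\mathrm H}=0$, and read off (ii) and (iii) from the definitions and a dimension count. Your added remark that $P$ is reduced, so containment on $K$-points gives scheme-theoretic containment, is a harmless extra detail that the paper leaves implicit.
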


\begin{proof}
Choose an $\F_{q^2}$-basis $e_1,\dots,e_r$ of $\widetilde P$. For
$v=\sum_i\lambda_ie_i\in\widehat P$ with $\lambda_i\in K$ we have
\[
  \sum_j v_j^{\,q+1}
  =\sum_{i,i'}\lambda_i\lambda_{i'}^{\,q}
   \langle e_i,e_{i'}\rangle_{\mathrm H}=0 ,
\]
since $\widetilde P$ is totally isotropic; as $\sum_jx_j^{\,q+1}$ is the
equation of $X$, this gives $P\subset X$. Next, $\widetilde P$ is an
$\F_{q^2}$-subspace by Definition~\ref{def:generator}, so $P$ is
defined over $\F_{q^2}$; and $\dim P=r-1$, $\dim X=2r-2$.
\end{proof}

Fix now $P,P'\in\mathcal G$ and set
\[
  Z:=P\cap P',\qquad j:=\dim Z\in\{-1,0,1,\dots,r-1\},
\]
with $j=-1$ meaning $Z=\emptyset$; for $j\ge0$, $Z\cong\mathbb P^{j}$.
Write $d=q+1=\deg X$ and $h=\chern_1(\mathcal O(1))$; for any
subvariety $V\subset\mathbb P^{2r-1}$ we use the same letter $h$ for
the restriction of $\chern_1(\mathcal O(1))$ to $V$. Both $[P]$ and $[P']$
lie in $H^{2(r-1)}(X,\Ql(r-1))$ and $\dim X=2(r-1)$, so we may form
\[
  P\cdot P'\;:=\;\deg\bigl([P]\cupp[P']\bigr)
  \;=\;\bigl\langle[P],[P']\bigr\rangle .
\]
The \emph{expected} dimension of $P\cap P'$ inside $X$ is
$\dim P+\dim P'-\dim X=0$, while the actual dimension is $j$; so the
intersection carries excess $j$ and $P\cdot P'$ is not a naive point
count. It is computed by Fact~\ref{fact:excess}. We carry this out in
detail, since the resulting numbers are the sole geometric input to
everything that follows.

\begin{lemma}[Cleanness]
\label{lem:clean}
Suppose $j\ge0$. Then $Z$ is smooth and $T_zZ=T_zP\cap T_zP'$ inside
$T_zX$ for every closed point $z\in Z$; that is, $P$ and $P'$ meet
cleanly along $Z$.
\end{lemma}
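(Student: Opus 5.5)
The plan is to reduce everything to linear algebra inside $K^{2r}$ by computing tangent spaces of linear subspaces and of the hypersurface $X$. First, $Z=P\cap P'$ is the projectivization of $\widehat P\cap\widehat P'$. This is a linear subspace: indeed $\widehat P\cap\widehat P'=(\widetilde P\cap\widetilde P')\otimes_{\F_{q^2}}K$, by flatness of base change, and it has dimension $j+1$. Hence $Z\cong\mathbb P^j$ is smooth, which settles the first assertion.

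For the tangent condition, fix a closed point $z\in Z$ with cone representative $v\in\widehat P\cap\widehat P'$. We use the standard description $T_z\mathbb P^{2r-1}=\mathrm{Hom}(Kv,K^{2r}/Kv)$. Under it, the tangent space of a linear subspace $\mathbb P(W)$ through $z$ is $\mathrm{Hom}(Kv,W/Kv)$. These spaces are all subspaces of the ambient tangent space $T_z\mathbb P^{2r-1}$, and $T_zX$ is also a subspace of it. Therefore the intersection $T_zP\cap T_zP'$ computed inside $T_zX$ coincides with the intersection computed inside $T_z\mathbb P^{2r-1}$, so we never need to identify $T_zX$ explicitly.

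Inside $T_z\mathbb P^{2r-1}$ we have
\[
  T_zP\cap T_zP'=\mathrm{Hom}\bigl(Kv,(\widehat P/Kv)\cap(\widehat P'/Kv)\bigr).
\]
Since both $\widehat P$ and $\widehat P'$ contain $Kv$, the intersection of their images in $K^{2r}/Kv$ is the image of $\widehat P\cap\widehat P'$. This gives
\[
  T_zP\cap T_zP'=\mathrm{Hom}\bigl(Kv,(\widehat P\cap\widehat P')/Kv\bigr)=T_zZ,
\]
which is the required equality.

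The only point requiring care is the base-change identity $\widehat P\cap\widehat P'=(\widetilde P\cap\widetilde P')\otimes K$ used in the first step. It follows because intersection of subspaces is a kernel, namely the kernel of $\widetilde P\oplus\widetilde P'\to\F_{q^2}^{2r}$, and tensoring with the field $K$ is exact, so kernels commute with this base change. This also confirms that $j=\dim Z$ agrees with the $\F_{q^2}$-rational intersection dimension used to define the relations $R_j$.

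Smoothness of $X$, shown after Definition~\ref{def:herm}, is not needed for the tangent equality itself. It is needed only so that the ambient $X$ is smooth, as the cleanness hypothesis of Fact~\ref{fact:excess} requires. I expect no real obstacle: the argument is entirely about linear subspaces and does not use any property of $X$ beyond containing $P$ and $P'$.
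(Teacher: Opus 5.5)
Your proposal is correct and follows essentially the same route as the paper's proof. Both identify $T_zP$, $T_{z}P'$ and $T_zZ$ with $\mathrm{Hom}(\ell_z,-/\ell_z)$ applied to the corresponding cones, observe that the images of $\widehat P/\ell_z$ and $\widehat P'/\ell_z$ in $K^{2r}/\ell_z$ meet in $(\widehat P\cap\widehat P')/\ell_z$, and note that the intersection taken inside $T_zX$ is the same as the one taken inside $T_z\mathbb P^{2r-1}$. Your additional check that $\widehat P\cap\widehat P'=(\widetilde P\cap\widetilde P')\otimes_{\F_{q^2}}K$, which makes $j$ agree with the $\F_{q^2}$-rational intersection dimension, is a point the paper leaves implicit.
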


\begin{proof}
$Z\cong\mathbb P^{j}$ is smooth. Let $\widehat P,\widehat P'\subset
K^{2r}$ be the $r$-dimensional linear subspaces with
$P=\mathbb P(\widehat P)$, $P'=\mathbb P(\widehat P')$, and let
$\ell_z\subset K^{2r}$ be the line corresponding to $z$. The standard
identification of the tangent space of a projective space with a
$\mathrm{Hom}$-space \cite[Thm.~II.8.13]{Hartshorne} gives
\[
  T_z\mathbb P^{2r-1}=\mathrm{Hom}\bigl(\ell_z,K^{2r}/\ell_z\bigr),
  \qquad
  T_zP=\mathrm{Hom}\bigl(\ell_z,\widehat P/\ell_z\bigr),
\]
and likewise for $P'$ and for
$Z=\mathbb P(\widehat P\cap\widehat P')$. Since
$\ell_z\subset\widehat P\cap\widehat P'$, the images of
$\widehat P/\ell_z$ and $\widehat P'/\ell_z$ in $K^{2r}/\ell_z$ meet
exactly in $(\widehat P\cap\widehat P')/\ell_z$; as
$\mathrm{Hom}(\ell_z,-)$ is left exact it preserves this intersection,
whence $T_zP\cap T_zP'=T_zZ$. Finally $T_zP,T_zP'\subset T_zX\subset
T_z\mathbb P^{2r-1}$ because $P,P'\subset X$ by
Lemma~\ref{lem:contained}(i), so the intersection computed
inside $T_zX$ is the same.
\end{proof}

\begin{lemma}[Chern class of the excess bundle]
\label{lem:excessbundle}
Let $j\ge0$ and let $E$ be the excess bundle \eqref{eq:excessbundle}
for $P,P'\subset X$. Then $E$ is a vector bundle on
$Z\cong\mathbb P^{j}$ of rank $j$, with total Chern class
\[
  \chern(E)\;=\;\frac{(1+h)^{\,j+1}}{1+d\,h},\qquad d=q+1 .
\]
\end{lemma}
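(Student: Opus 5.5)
By definition $E=(N_{P'/X}|_Z)/N_{Z/P}$, where $N_{Z/P}$ injects into $N_{P'/X}|_Z$ by cleanness (Lemma~\ref{lem:clean}). Total Chern classes are multiplicative in short exact sequences, so
\[
  \chern(E)\;=\;\chern\bigl(N_{P'/X}|_Z\bigr)\,\chern\bigl(N_{Z/P}\bigr)^{-1}
\]
in the Chow ring $\Z[h]/(h^{j+1})$ of $Z\cong\mathbb P^j$. The rank is $(r-1)-(r-1-j)=j$, which matches \eqref{eq:excessformula}. The plan is to compute the two factors separately from the normal bundle facts.

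For $N_{Z/P}$ we use that $Z\cong\mathbb P^j$ is a linear subspace of $P\cong\mathbb P^{r-1}$. Fact~\ref{fact:normal} gives $N_{Z/P}\cong\mathcal O_Z(1)^{\oplus(r-1-j)}$, hence $\chern(N_{Z/P})=(1+h)^{r-1-j}$.

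For $N_{P'/X}$ we use the exact sequence of normal bundles for the chain $P'\subset X\subset\mathbb P^{2r-1}$:
\[
  0\to N_{P'/X}\to N_{P'/\mathbb P^{2r-1}}\to N_{X/\mathbb P^{2r-1}}|_{P'}\to 0 .
\]
This sequence is exact because all three varieties are smooth. By Fact~\ref{fact:normal}, $N_{P'/\mathbb P^{2r-1}}\cong\mathcal O(1)^{\oplus r}$ and $N_{X/\mathbb P^{2r-1}}\cong\mathcal O(d)$. Therefore $\chern(N_{P'/X})=(1+h)^r/(1+dh)$, and restricting to $Z$ keeps the same expression with $h$ now denoting the hyperplane class on $Z$.

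Dividing the two results gives
\[
  \chern(E)=\frac{(1+h)^r}{(1+dh)(1+h)^{r-1-j}}=\frac{(1+h)^{j+1}}{1+dh},
\]
where $1+dh$ is invertible because $h$ is nilpotent. The only point needing care is that the tangent-space identity from Lemma~\ref{lem:clean} really yields an injection of bundles $N_{Z/P}\hookrightarrow N_{P'/X}|_Z$, with locally free quotient. This is the standard consequence of clean intersection recalled before Fact~\ref{fact:excess}, so I would cite it rather than reprove it. The conormal/normal sequence for $P'\subset X\subset\mathbb P^{2r-1}$ is routine in the smooth case. None of these steps is a real obstacle; the computation is essentially formal once the bundles are identified.
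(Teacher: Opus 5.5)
Your proposal is correct and is essentially the paper's proof. Both compute $\chern(N_{P'/X})=(1+h)^{r}/(1+dh)$ from the normal bundle sequence of $P'\subset X\subset\mathbb P^{2r-1}$, take $\chern(N_{Z/P})=(1+h)^{r-1-j}$ from Fact~\ref{fact:normal}, and divide by the Whitney formula, with the same rank count $(r-1)-(r-1-j)=j$.
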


\begin{proof}
By Lemma~\ref{lem:clean} the bundle $E$ is defined. We compute the two
bundles in \eqref{eq:excessbundle}.

\textup{(a)} By Fact~\ref{fact:normal} applied to $P'=\mathbb P^{r-1}
\subset\mathbb P^{2r-1}$, $N_{P'/\mathbb P^{2r-1}}\cong
\mathcal O_{P'}(1)^{\oplus r}$; note the codimension of a generator in
$\mathbb P^{2r-1}$ is $r$, not $r-1$.

\textup{(b)} $N_{X/\mathbb P^{2r-1}}\cong\mathcal O_X(d)$, $X$ being a
smooth hypersurface of degree $d$ (Fact~\ref{fact:normal}).

\textup{(c)} From $P'\subset X\subset\mathbb P^{2r-1}$ we get
\[
  0\to N_{P'/X}\to N_{P'/\mathbb P^{2r-1}}
  \to N_{X/\mathbb P^{2r-1}}\big|_{P'}\to0,
\]
so by (a), (b) and Whitney,
\[
  \chern\bigl(N_{P'/X}\bigr)=\frac{(1+h)^{r}}{1+d\,h},
  \qquad\mathrm{rank}\,N_{P'/X}=r-1 .
\]

\textup{(d)} By Fact~\ref{fact:normal} applied to
$Z=\mathbb P^{j}\subset P=\mathbb P^{r-1}$,
$N_{Z/P}\cong\mathcal O_Z(1)^{\oplus(r-1-j)}$.

Combining, $E$ has rank $(r-1)-(r-1-j)=j$ and
\[
  \chern(E)=\frac{\chern\bigl(N_{P'/X}|_{Z}\bigr)}{\chern\bigl(N_{Z/P}\bigr)}
  =\frac{(1+h)^{r}/(1+d\,h)}{(1+h)^{\,r-1-j}}
  =\frac{(1+h)^{\,j+1}}{1+d\,h} . \qedhere
\]
\end{proof}

\begin{lemma}[Intersection numbers]
\label{lem:f}
Let $P,P'\in\mathcal G$ and $j=\dim(P\cap P')$. Then $P\cdot P'$
depends only on $j$, and
\begin{equation}
\label{eq:fclosed}
  P\cdot P'\;=\;f(j)\;:=\;\frac{1-(-q)^{\,j+1}}{q+1},\qquad j\ge-1 .
\end{equation}
In particular $f(-1)=0$, and $f(r-1)=P\cdot P$ is the common
self-intersection number of a generator:
\[
  f(0)=1,\quad f(1)=1-q,\quad f(2)=q^{2}-q+1,\quad
  f(3)=-(q-1)(q^{2}+1),\ \dots
\]
\end{lemma}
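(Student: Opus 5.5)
We split into the cases $j=-1$ and $j\ge0$.

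If $j=-1$, then $P\cap P'=\emptyset$. The cycle classes of disjoint subvarieties have zero cup product: by Fact~\ref{fact:cycle} the product is represented by a refined intersection supported on $P\cap P'=\emptyset$. Hence $P\cdot P'=0=f(-1)$.

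Suppose now $j\ge0$. By Lemma~\ref{lem:clean}, $P$ and $P'$ meet cleanly along $Z\cong\mathbb P^j$. Both are linear subspaces of the smooth variety $X$, so they and $Z\subset P$ are regularly embedded. Moreover $\dim P+\dim P'=\dim X$. Fact~\ref{fact:excess} therefore applies and gives $P\cdot P'=\deg(\chern_j(E)\capp[Z])$. By Lemma~\ref{lem:excessbundle}, $\chern(E)=(1+h)^{j+1}(1+dh)^{-1}$ on $\mathbb P^j$, and $\deg h^j\capp[Z]=1$. Thus $P\cdot P'$ is the coefficient of $h^j$ in this power series. In particular it depends only on $j$, which already proves the first assertion; the self-intersection case $j=r-1$ is covered by the same formula.

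It remains to extract the coefficient. Expanding,
\[
  [h^j]\,\frac{(1+h)^{j+1}}{1+dh}\;=\;\sum_{i=0}^{j}\binom{j+1}{i}(-d)^{j-i}.
\]
This is the binomial expansion of $\bigl((1-d)^{j+1}-(-d)^{j+1}\bigr)/(1-d-(-d))$ with the top term omitted. More cleanly,
\[
  \sum_{i=0}^{j+1}\binom{j+1}{i}(-d)^{j+1-i}=(1-d)^{j+1},
\]
so
\[
  (-d)\sum_{i=0}^{j}\binom{j+1}{i}(-d)^{j-i}=(1-d)^{j+1}-1 .
\]
Hence the coefficient equals $\bigl(1-(1-d)^{j+1}\bigr)/d$. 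With $d=q+1$ we have $1-d=-q$, which gives $f(j)=(1-(-q)^{j+1})/(q+1)$. This formula also returns $0$ at $j=-1$, consistent with the first case. The listed values $f(0)=1$, $f(1)=1-q$, and so on follow by substitution.

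The only delicate step is checking the hypotheses of the excess formula, namely regular embeddings and cleanness, but these are supplied by Lemma~\ref{lem:clean} and the smoothness of all the varieties involved. The coefficient extraction is routine.
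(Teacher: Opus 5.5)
Your proof is correct and follows the same route as the paper. The disjoint case gives $0$, and for $j\ge0$ cleanness (Lemma~\ref{lem:clean}) together with the excess formula and $\chern(E)=(1+h)^{j+1}/(1+dh)$ reduces everything to the coefficient of $h^{j}$, which you evaluate exactly as the paper does, by removing the $i=j+1$ term from $(1-d)^{j+1}$ and dividing by $-d$.

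One correction: delete the sentence saying the sum is the expansion of $\bigl((1-d)^{j+1}-(-d)^{j+1}\bigr)/\bigl((1-d)-(-d)\bigr)$ with the top term omitted. It is false: at $j=1$ that quotient is $1-2d$, while the sum is $2-d$. The ``more cleanly'' computation that follows it is the right one.
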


\begin{proof}
If $j=-1$ then $P\cap P'=\emptyset$ and $P\cdot P'=0$, agreeing with
\eqref{eq:fclosed}. Assume $j\ge0$. By Lemma~\ref{lem:clean} the
hypotheses of Fact~\ref{fact:excess} hold, with
$\mathrm{codim}(P',X)=r-1$ and $\mathrm{codim}(Z,P)=r-1-j$, so
$e=j=\mathrm{rank}\,E$ in agreement with
Lemma~\ref{lem:excessbundle}. Here $Z\cong\mathbb P^{j}$ and $h$ is
the restriction of $\chern_1(\mathcal O(1))$ to $Z$, so that
$A^{*}(Z)=\Z[h]/(h^{j+1})$. In this ring
Lemma~\ref{lem:excessbundle} gives
$\chern(E)=(1+h)^{j+1}(1+dh)^{-1}$ (a finite sum, $h$ being
nilpotent), with $d=q+1$. Expanding $(1+h)^{j+1}$ and
$(1+dh)^{-1}=\sum_{k\ge0}(-d)^{k}h^{k}$ and collecting the terms of
degree $j$ gives
\[
  \chern_j(E)\;=\;\Bigl(\sum_{i=0}^{j}\binom{j+1}{i}(-d)^{\,j-i}\Bigr)h^{j}.
\]
Since $\deg\bigl(h^{j}\capp[Z]\bigr)=1$, \eqref{eq:excessformula} gives
\[
  P\cdot P'\;=\;\deg\bigl(\chern_j(E)\capp[Z]\bigr)
  \;=\;\sum_{i=0}^{j}\binom{j+1}{i}(-d)^{\,j-i}.
\]
(That this rational-equivalence computation gives the cup
product is Fact~\ref{fact:cycle}.)

It remains to evaluate this sum. Apply the binomial theorem in the form
$\sum_{i=0}^{j+1}\binom{j+1}{i}(-d)^{\,j+1-i}=(1-d)^{\,j+1}$ and
subtract the term $i=j+1$, which is $1$:
\[
  \sum_{i=0}^{j}\binom{j+1}{i}(-d)^{\,j+1-i}=(1-d)^{\,j+1}-1 .
\]
Dividing by $-d$ and using $1-d=-q$ gives
$\sum_{i=0}^{j}\binom{j+1}{i}(-d)^{\,j-i}=\frac{(1-d)^{j+1}-1}{-d}=\frac{1-(-q)^{\,j+1}}{q+1}$.
\end{proof}

\subsection{$\Ql^{\mathcal G}$, the cycle map, and the Gram matrix}
\label{subsec:gram}

We now pass from cycles to linear algebra: the cycle map sends the
basis vector $e_P$ of $\Ql^{\mathcal G}$ to $[P]$, and the intersection
pairing pulls back to a form whose Gram matrix $M$ is given by
Lemma~\ref{lem:f}. We express $M$ in terms of the distance matrices of
the dual polar graph and describe the resulting eigenspace
decomposition of $\Ql^{\mathcal G}$.

\begin{definition}
\label{def:phi}
Let $\Ql^{\mathcal G}$ be the $\Ql$-vector space with basis
$\{e_P\}_{P\in\mathcal G}$, equipped with the standard inner product
$\langle\,,\,\rangle_{\mathrm{std}}$ making $\{e_P\}$ orthonormal. Let
\[
  \phi\;:\;\Ql^{\mathcal G}\;\longrightarrow\;
  H^{2r-2}\bigl(X_{\overline\F_p},\Ql(r-1)\bigr),
  \qquad e_P\longmapsto[P],
\]
(well defined by Lemma~\ref{lem:contained}), let
$V:=\mathrm{image}(\phi)=\mathrm{span}_{\Ql}\{[P]:P\in\mathcal G\}$,
and let
\[
  M\;:=\;\bigl(P\cdot P'\bigr)_{P,P'\in\mathcal G}
  \;=\;\Bigl(\bigl\langle\phi(e_P),\phi(e_{P'})\bigr\rangle\Bigr)_{P,P'}
\]
be the Gram matrix of the pullback $\phi^{*}\langle\,,\,\rangle$.
\end{definition}

Thus $M$ is an $N_r\times N_r$ symmetric \emph{integer} matrix
(Lemma~\ref{lem:f}), and the entire remaining problem is to compare
$M$ with the standard inner product.

\begin{proposition}
\label{prop:gram}
With the notation of \S\ref{subsec:generators}\textup{:}
\begin{enumerate}
\item[\textup{(i)}] $M=\sum_{j=0}^{r}f(r-1-j)\,A_j$.
\item[\textup{(ii)}] $A_j=p_j(A)$ for a polynomial $p_j\in\Q[x]$ of
  degree exactly $j$; consequently
  \[
    M\;=\;F(A),\qquad F:=\sum_{j=0}^{r}f(r-1-j)\,p_j\in\Q[x],
    \quad\deg F\le r .
  \]
\item[\textup{(iii)}] $A$ is a symmetric integer matrix with exactly $r+1$
  distinct real eigenvalues $\theta_0>\theta_1>\cdots>\theta_r$, and
  $A\mathbf 1=b_0\mathbf 1$ for the
vector $\mathbf 1=\sum_P e_P$ all of whose coordinates equal $1$. Put
  $W_0:=\Ql\cdot\mathbf 1$ and $m_0:=1$.
\item[\textup{(iv)}] If $\theta\in\Q$ is an eigenvalue of $A$, every matrix of the form $\sum_j \lambda_jA_j$ acts on
  $\ker(A-\theta)\subset\Ql^{\mathcal G}$ by a scalar; in particular $M$ acts on $W_0$ by
  $\mu_0:=F(b_0)$.
\end{enumerate}
\end{proposition}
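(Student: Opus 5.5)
Part (i) follows entrywise from Lemma~\ref{lem:f} together with Proposition~\ref{prop:generators}. For $P,P'\in\mathcal G$ there is exactly one $j$ with $(P,P')\in R_j$, namely $j=\partial(P,P')=r-1-\dim(P\cap P')$. By Lemma~\ref{lem:f}, $M_{P,P'}=f(\dim(P\cap P'))=f(r-1-j)$. Since the relations $R_0,\dots,R_r$ partition $\mathcal G\times\mathcal G$ (so $\sum_jA_j=J$), summing over $j$ gives $M=\sum_{j=0}^r f(r-1-j)A_j$. The case $j=r$ ($Z=\emptyset$) contributes $f(-1)=0$, which is consistent.

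For (ii) I would induct on $j$ using the three-term recurrence \eqref{eq:3termA}. Set $p_0=1$ and $p_1=x$. Because the graph is distance-regular, $c_{j+1}\neq0$ for $j<r$; indeed $c_{j+1}=(q^{2j+2}-1)/(q^2-1)>0$ by Lemma~\ref{lem:array}. The recurrence then gives
\[
 A_{j+1}=c_{j+1}^{-1}\bigl((A-a_j)A_j-b_{j-1}A_{j-1}\bigr),
\]
so we may put $p_{j+1}=c_{j+1}^{-1}\bigl((x-a_j)p_j-b_{j-1}p_{j-1}\bigr)\in\Q[x]$. Its leading coefficient is $(c_1\cdots c_{j+1})^{-1}\neq0$, so $\deg p_{j+1}=j+1$. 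Substituting into (i) gives $M=F(A)$ with $\deg F\le r$.

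For (iii), $A$ is a symmetric $0/1$ matrix, so it is diagonalizable with real eigenvalues. Since every vertex has exactly $k_1=b_0$ neighbours, we get $A\mathbf 1=b_0\mathbf 1$. The eigenvalue count is the step that needs a real argument. The matrices $A_0,\dots,A_r$ have disjoint nonzero supports, so they are linearly independent, and by (ii) they lie in $\Q[A]$. Hence $\dim\Q[A]\ge r+1$, and the minimal polynomial of $A$ has degree $\ge r+1$. Because $A$ is diagonalizable, this degree equals the number of distinct eigenvalues. For the upper bound, the recurrence \eqref{eq:3termA} at $j=r$ shows $\mathrm{span}\{A_0,\dots,A_r\}$ is closed under multiplication by $A$; it contains $\mathrm{Id}$, so it is an algebra of dimension $r+1$ that contains $\Q[A]$. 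So $\dim\Q[A]\le r+1$, and there are exactly $r+1$ distinct eigenvalues. I expect this dimension count to be the only point requiring care; everything else is formal.

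For (iv), each $\sum_j\lambda_jA_j$ equals $G(A)$ with $G=\sum_j\lambda_jp_j\in\Q[x]$ by (ii), after extending scalars to $\Ql$ if the $\lambda_j$ are not rational. On $\ker(A-\theta)$ the matrix $G(A)$ therefore acts by the scalar $G(\theta)$. Applying this with $\theta=b_0\in\Q$ and $W_0=\Ql\mathbf 1\subseteq\ker(A-b_0)$ shows that $M=F(A)$ acts on $W_0$ by $\mu_0=F(b_0)$. Alternatively, $M\mathbf 1=\sum_jf(r-1-j)k_j\mathbf 1$ directly, since $A_j\mathbf 1=k_j\mathbf 1$.
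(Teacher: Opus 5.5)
Your proof is correct and follows the paper's argument essentially step for step: $M$ is read off entrywise for (i), $p_j$ is built from the three-term recurrence using $c_{j+1}\neq0$ for (ii), the disjointly supported $A_j$ are linearly independent inside $\Q[A]$ for (iii), and $\theta$ is substituted into $\sum_j\lambda_jp_j$ for (iv). Your explicit upper bound $\dim\Q[A]\le r+1$ is a welcome addition: the recurrence at $j=r$ (where $c_{r+1}=0$) shows that $\mathrm{span}\{A_0,\dots,A_r\}$ is stable under multiplication by $A$, whereas the paper asserts that the minimal polynomial has degree \emph{exactly} $r+1$ without spelling out this step.
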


\begin{proof}
\textup{(i)} By Lemma~\ref{lem:f}, $P\cdot P'$ depends only on
$\dim(P\cap P')$; the entry of $M$ at $(P,P')\in R_j$ is the constant
$f(r-1-j)$.

\textup{(ii)} Induction on $j$: $A_0=\mathrm{Id}=p_0(A)$ and
$A_1=A=p_1(A)$; solving \eqref{eq:3termA} for $A_{j+1}$,
\[
  A_{j+1}=\frac{1}{c_{j+1}}\bigl(A\,A_j-a_jA_j-b_{j-1}A_{j-1}\bigr)
  =\frac{1}{c_{j+1}}\bigl(x\,p_j-a_jp_j-b_{j-1}p_{j-1}\bigr)(A),
\]
legitimate since $c_{j+1}\ne0$ for $j<r$ (Lemma~\ref{lem:array}); the
degree rises by exactly one at each step, the leading coefficient
being $1/(c_1\cdots c_j)\ne0$.

\textup{(iii)} $A$ is real symmetric, hence orthogonally
diagonalisable. The $A_j$ are nonzero with pairwise disjoint supports,
hence linearly independent; by (ii) they span the same space as
$\mathrm{Id},A,\dots,A^{r}$, so those $r+1$ powers are linearly
independent and the minimal polynomial of $A$ has degree exactly
$r+1$. A symmetric matrix has minimal polynomial
$\prod_\theta(x-\theta)$ over its distinct eigenvalues, so $A$ has
exactly $r+1$ of them. The graph is connected of diameter $r$ and
regular of valency $b_0$, so $A\mathbf 1=b_0\mathbf 1$.

\textup{(iv)} Any $\sum_j\lambda_jA_j=\bigl(\sum_j\lambda_jp_j\bigr)(A)$ is a
polynomial in $A$, hence acts on $\ker(A-\theta)$ by the scalar
obtained by substituting $\theta$.
\end{proof}

\begin{remark}
\label{rem:allW}
By \cite[Thm.~9.4.3]{BCN}, the eigenvalues of $A$ are
$\theta_j=q\frac{q^{2(r-j)}-1}{q^{2}-1}-\frac{q^{2j}-1}{q^{2}-1}$ $(0\le j\le r)$. They are
integers, $\theta_0=b_0$ is simple and $\theta_r=-c_r$. Hence each
$W_j:=\ker(A-\theta_j)\subset\Ql^{\mathcal G}$ is defined over $\Q$, and the spectral theorem
gives $\Ql^{\mathcal G}=W_0\perp W_1\perp\cdots\perp W_r$ with $W_0=\Ql\mathbf 1$. We
shall not use this: only $W_0$ and $W_r$ enter, and what we need of them is proved
directly (Theorem~\ref{thm:bottomrow}, Lemmas~\ref{rem:thetastarmin}
and~\ref{lem:Wprime}). In the notation of Lemma~\ref{lem:Wprime},
$W'=W_1\perp\cdots\perp W_{r-1}$.
\end{remark}

Two further elementary facts will be needed. The first is the only
place where the hyperplane class meets the generators; the second
identifies the eigenvalue $\mu_0$ before any spectral analysis.

\begin{lemma}[Degree of a generator, and row sums]
\label{lem:eig0}
For every $P\in\mathcal G$\textup{:}
\begin{enumerate}
\item[\textup{(i)}] $\bigl\langle[P],L^{r-1}\bigr\rangle=1$;
\item[\textup{(ii)}] $\displaystyle\sum_{P'\in\mathcal G}P\cdot P'
  \;=\;\sum_{j=0}^{r}k_j\,f(r-1-j)\;=\;\frac{N_r}{q+1}$, i.e.\
  $M\mathbf 1=\frac{N_r}{q+1}\mathbf 1$ and
  $\mu_0=\frac{N_r}{q+1}$.
\end{enumerate}
\end{lemma}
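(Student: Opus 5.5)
For (i), I will use that $L^{r-1}$ is the class of a general codimension-$(r-1)$ linear section of $X$, i.e.\ $X\cap\Lambda$ for a general linear subspace $\Lambda\subset\mathbb P^{2r-1}$ of dimension $r$. By Fact~\ref{fact:cycle}, $\langle[P],L^{r-1}\rangle=\deg([P]\cupp L^{r-1})=\deg(h^{r-1}\capp[P])$, where $h$ is the hyperplane class restricted to $P\cong\mathbb P^{r-1}$. This is the degree of the linear subvariety $P$, which is $1$. Equivalently, a general $\Lambda$ of dimension $r$ meets $P$ transversally in one point. No excess intersection is needed here.

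For (ii), I will first split the row sum according to the distance relations. By Proposition~\ref{prop:gram}(i) and Lemma~\ref{lem:kj}, there are $k_j$ generators $P'$ with $\dim(P\cap P')=r-1-j$, each contributing $f(r-1-j)$. Hence the row sum equals $\sum_{j=0}^r k_jf(r-1-j)$. This is independent of $P$, which gives $M\mathbf 1=\mu\mathbf 1$.

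Next I will substitute \eqref{eq:fclosed}: $f(r-1-j)=\bigl(1-(-q)^{r-j}\bigr)/(q+1)$. The row sum then becomes
\[
\frac{1}{q+1}\Bigl(\sum_{j=0}^r k_j-\sum_{j=0}^r k_j(-q)^{r-j}\Bigr)=\frac{N_r-0}{q+1},
\]
by the first and third identities of Lemma~\ref{lem:sumkj}.

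Finally, to identify $\mu_0$: by Proposition~\ref{prop:gram}(iv), $M$ acts on $W_0=\Ql\mathbf 1$ by $\mu_0$. Since $M\mathbf 1=\frac{N_r}{q+1}\mathbf 1$, this forces $\mu_0=N_r/(q+1)$.

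There is no real obstacle. The only points needing care are that $L^{r-1}$ is indeed represented by a linear section meeting $P$ properly, and the sign bookkeeping $r-1-j+1=r-j$ when matching the exponent to the third identity of Lemma~\ref{lem:sumkj}. As a consistency check, (i) together with $\langle L^{r-1},L^{r-1}\rangle=q+1$ suggests that $\sum_P[P]$ should have a component along $L^{r-1}$, matching the factor $1/(q+1)$.
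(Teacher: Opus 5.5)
Your proposal is correct and follows the paper's proof essentially verbatim. For (i) you use a general codimension-$(r-1)$ linear section meeting $P\cong\mathbb P^{r-1}$ transversally in one point, and for (ii) you group the row sum by distance classes with multiplicities $k_j$, insert the closed form of $f$, and apply the first and third identities of Lemma~\ref{lem:sumkj}; your identification of $\mu_0$ via Proposition~\ref{prop:gram}(iv) amounts to the paper's remark that $\mathbf 1$ spans $W_0$.
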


\begin{proof}
(i) $P\cong\mathbb P^{r-1}$ meets a general linear subspace of
$\mathbb P^{2r-1}$ of codimension $r-1$ transversally in a
single reduced point.

(ii) The first equality holds because, by Lemma~\ref{lem:f}, the entry
$P\cdot P'$ equals $f(r-1-j)$ whenever $\dim(P\cap P')=r-1-j$, and by
Lemma~\ref{lem:kj} there are $k_j$ such $P'$. For the second, insert
the closed form \eqref{eq:fclosed}, $f(r-1-j)=\frac{1-(-q)^{\,r-j}}{q+1}$,
and apply Lemma~\ref{lem:sumkj} twice:
\[
  \sum_{j=0}^{r}k_j\,f(r-1-j)
  =\frac{1}{q+1}\Bigl(\sum_{j}k_j-\sum_{j}k_j(-q)^{\,r-j}\Bigr)
  =\frac{N_r-0}{q+1}=\frac{N_r}{q+1}.
\]
The row sums of $M$ are thus all equal to $\frac{N_r}{q+1}$, which
says exactly $M\mathbf 1=\frac{N_r}{q+1}\mathbf 1$; and $\mathbf 1$
spans $W_0$ by definition.
\end{proof}

\section{The orthogonal decomposition of $\Ql^{\mathcal G}$}
\label{sec:decomp}

We now determine the spectrum of $M$ completely, and with it the
decomposition of $\Ql^{\mathcal G}$ relative to $\phi$. Everything in
\S\S\ref{subsec:reform}--\ref{subsec:spectrum} is pure linear algebra
and combinatorics, using only Lemma~\ref{lem:array},
Lemma~\ref{lem:kj}, Lemma~\ref{lem:sumkj}, Lemma~\ref{lem:f} and
Proposition~\ref{prop:gram}.

\subsection{A cleaner form of $M$}
\label{subsec:reform}

Since $\sum_{j=0}^{r}A_j=J$, the closed form \eqref{eq:fclosed} for
$f$ gives at once
\begin{equation}
\label{eq:Mreform}
  M\;=\;\sum_{j=0}^{r}f(r-1-j)A_j
  \;=\;\frac{1}{q+1}\bigl(J-M'\bigr),
  \qquad
  M'\;:=\;\sum_{j=0}^{r}(-q)^{\,r-j}A_j .
\end{equation}
Concretely $M'$ has the transparent description
\begin{equation}
\label{eq:Mprime-entries}
  M'_{P,P'}\;=\;(-q)^{\dim_{\F_{q^2}}(\widetilde P\cap\widetilde P')} ,
\end{equation}
since a pair at distance $j$ has $\dim_{\F_{q^2}}(\widetilde P\cap
\widetilde P')=r-j$.

\medskip

\noindent
The matrix $J=\mathbf 1\mathbf 1^{\mathsf T}$ is supported on $W_0$: it
acts by $N_r$ on $W_0=\Ql\mathbf 1$ and by $0$ on the orthogonal complement $\mathbf 1^{\perp}$. So the whole
spectrum of $M$ is known as soon as we know that of $M'$, and what we
shall prove is that $M'$ too is supported on a \emph{single}
eigenspace of $A$.

\subsection{The key identity}
\label{subsec:key}

We show that $M'$ is supported on a single eigenspace of $A$, by
checking directly from the intersection array that $AM'$ is a scalar
multiple of $M'$. We then identify this eigenspace as the one
belonging to the smallest eigenvalue of $A$, and compute the scalar by
which $M'$ acts on it and its dimension.

\begin{theorem}[Key identity]
\label{thm:bottomrow}
Put
\[
  \theta_\star\;:=\;-c_r\;=\;-\,\frac{q^{2r}-1}{q^{2}-1}.
\]
Then, for 
every prime power $q$,
\begin{equation}
\label{eq:bottomrow}
  A\,M'\;=\;\theta_\star\,M' .
\end{equation}
Consequently $\theta_\star$ is an eigenvalue of $A$; writing
$W_\star:=\ker(A-\theta_\star)$, we have
$\mathrm{im}\,M'\subseteq W_\star$, and $W_\star\ne W_0$.
\end{theorem}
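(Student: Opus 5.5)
Since $M'=\sum_{j=0}^{r}(-q)^{r-j}A_j$, the plan is to expand $AM'$ with the three-term recurrence \eqref{eq:3termA} and compare coefficients of the linearly independent matrices $A_0,\dots,A_r$. Write $\lambda_j:=(-q)^{r-j}$. Then
\[
  AM'=\sum_{j}\lambda_j\bigl(b_{j-1}A_{j-1}+a_jA_j+c_{j+1}A_{j+1}\bigr)
  =\sum_{i}\bigl(\lambda_{i+1}b_i+\lambda_ia_i+\lambda_{i-1}c_i\bigr)A_i ,
\]
with $b_{-1}=c_{r+1}=0$, $c_0=0$, $b_r=0$. So \eqref{eq:bottomrow} is equivalent to the scalar identities
\[
  \lambda_{i+1}b_i+\lambda_ia_i+\lambda_{i-1}c_i=\theta_\star\lambda_i\qquad(0\le i\le r).
\]
Dividing by $\lambda_i$, and using $\lambda_{i+1}/\lambda_i=-1/q$ and $\lambda_{i-1}/\lambda_i=-q$, this becomes
\[
  -\tfrac{1}{q}\,b_i+a_i-q\,c_i=-c_r .
\]

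Next I substitute Lemma~\ref{lem:array}: $a_i=(q-1)c_i$ by \eqref{eq:aisc}, so $a_i-qc_i=-c_i$. It then remains to check $\tfrac1q b_i+c_i=c_r$. With the lemma's values,
\[
  \tfrac1q b_i=q^{2i}\frac{q^{2(r-i)}-1}{q^2-1}=\frac{q^{2r}-q^{2i}}{q^2-1},
\]
and adding $c_i=(q^{2i}-1)/(q^2-1)$ gives $(q^{2r}-1)/(q^2-1)=c_r$. The boundary cases $i=0$ (where $c_0=0$) and $i=r$ (where $b_r=0$) are covered by the same formula, since the lemma's expressions give $c_0=0$ and $b_r=0$ anyway. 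No step here is a real obstacle; the only care needed is the index bookkeeping at the ends of the recurrence.

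For the consequences: $M'\neq0$ because its diagonal entries are $(-q)^r\neq0$. Any nonzero column $v$ of $M'$ then satisfies $Av=\theta_\star v$, so $\theta_\star$ is an eigenvalue. The identity $AM'=\theta_\star M'$ says precisely that $\mathrm{im}\,M'\subseteq\ker(A-\theta_\star)=W_\star$. Finally $\theta_\star<0<b_0$, and $W_0$ is the $b_0$-eigenspace by Proposition~\ref{prop:gram}(iii). Hence $W_\star\cap W_0=0$, and since $W_\star\neq0$ we get $W_\star\ne W_0$.
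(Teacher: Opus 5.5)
Your proposal is correct and follows the paper's own proof essentially step for step: you expand $AM'$ with the three-term recurrence, reduce to the $m$-independent identity $-\tfrac1q b_m+a_m-q\,c_m=-c_r$, and verify it via $a_m=(q-1)c_m$ and the cancellation of the $q^{2m}$ terms; you then read off the consequences from a nonzero column of $M'$ and the comparison $\theta_\star<0<b_0$. One small imprecision: Proposition~\ref{prop:gram}(iii) only gives $W_0=\Ql\mathbf 1\subseteq\ker(A-b_0)$, not that $W_0$ is the whole $b_0$-eigenspace, but containment is all your argument needs.
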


\begin{proof}
Write $g_j:=(-q)^{\,r-j}$, so $M'=\sum_{j=0}^{r}g_jA_j$. Applying
\eqref{eq:3termA} and collecting the coefficient of $A_m$,
\[
  A\,M'\;=\;\sum_{m=0}^{r}
  \bigl(g_{m+1}b_m+g_ma_m+g_{m-1}c_m\bigr)A_m ,
\]
and $g_{m+1}=-g_m/q$, $g_{m-1}=-q\,g_m$ turn the bracket into
$g_m\bigl[-\tfrac1q b_m+a_m-q\,c_m\bigr]$. So \eqref{eq:bottomrow}
amounts to the assertion that
\begin{equation}
\label{eq:bracket}
  -\tfrac1q b_m+a_m-q\,c_m\;=\;\theta_\star
  \qquad (0\le m\le r).
\end{equation}
The point is that the left-hand side does not depend on $m$. But
\eqref{eq:bracket} is immediate from the explicit formulas of
Lemma~\ref{lem:array}: by \eqref{eq:aisc} the left-hand side equals
$-\tfrac1q b_m-c_m$, in which the terms $q^{2m}$ cancel, leaving
$-c_r=\theta_\star$. The conventions $b_{-1}=c_{r+1}=0$,
$c_0=0$, $b_r=0$ make the computation valid at $m=0$ and $m=r$ as
well.

Finally $M'\ne0$, so \eqref{eq:bottomrow} exhibits any nonzero column
of $M'$ as a $\theta_\star$-eigenvector of $A$; every vector in
$\mathrm{im}\,M'$ is a combination of columns, whence
$\mathrm{im}\,M'\subseteq W_\star$. And $W_\star\ne W_0$ because
$\theta_\star<0<b_0$ and $A\mathbf 1=b_0\mathbf 1$.
\end{proof}

\begin{lemma}[$\theta_\star=\theta_r$ and $W_\star=W_r$]
\label{rem:thetastarmin}
The eigenvalue $\theta_\star$ is $\theta_r$, the smallest of the
$r+1$ eigenvalues $\theta_0>\theta_1>\cdots>\theta_r$ of
Proposition~\ref{prop:gram}(iii), and consequently $W_\star=W_r$.
\end{lemma}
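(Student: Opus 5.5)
We need to show that $\theta_\star$ is the smallest eigenvalue of $A$, without invoking the eigenvalue list of Remark~\ref{rem:allW}. The plan is to bound every eigenvalue of $A$ from below by $-c_r$, then use the fact from Theorem~\ref{thm:bottomrow} that $\theta_\star$ is itself an eigenvalue. Once $\theta_\star=\theta_r$ is known, $W_\star=\ker(A-\theta_\star)=\ker(A-\theta_r)=W_r$ follows from the definitions.

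For the lower bound I would use a clique partition (Delsarte–Hoffman style). For each $(r-1)$-dimensional totally isotropic $\F_{q^2}$-subspace $U$, consider the set $C_U$ of generators containing $U$. These are the maximal totally isotropic subspaces of the nondegenerate Hermitian plane $U^\perp/U$, i.e.\ its $q^3+1$ isotropic points. Any two distinct generators containing $U$ meet exactly in $U$, so they are adjacent, and $C_U$ is a clique. Conversely, two adjacent generators $P,P'$ lie in exactly one common clique, namely $C_{\widetilde P\cap\widetilde P'}$. Each generator lies in exactly as many cliques as it has hyperplanes, which is $\binom{r}{1}_{q^2}=\frac{q^{2r}-1}{q^2-1}=c_r$.

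With $B$ the vertex–clique incidence matrix, these facts give $BB^{\mathsf T}=A+c_r\,\mathrm{Id}$. Since $BB^{\mathsf T}$ is positive semidefinite, every eigenvalue of $A$ is $\ge -c_r=\theta_\star$. Theorem~\ref{thm:bottomrow} shows $\theta_\star$ is an eigenvalue, so it is the minimum, that is $\theta_\star=\theta_r$.

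The main obstacle is making this self-contained. One must check that the cliques $C_U$ cover each edge exactly once and count the hyperplanes of $\widetilde P$; both are immediate linear algebra. An alternative route avoids geometry altogether: take $v\in W_\star$, a nonzero column of $M'$, which is an eigenvector by \eqref{eq:bottomrow}. Then use the three-term recurrence, which shows the sign-alternating sequence $g_j=(-q)^{r-j}$ has $r$ sign changes. By the standard Sturm/interlacing property of the orthogonal polynomials $p_j$, only the smallest eigenvalue carries a standard sequence with $r$ sign changes. I would present the clique argument, as it is shortest.
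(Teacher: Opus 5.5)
Your clique-partition argument is the paper's proof: the paper also takes the generator--pencil incidence matrix (pencils being the generators through a fixed totally isotropic $Z$ with $\dim Z=r-2$), shows it satisfies $CC^{\mathsf T}=A+c_r\,\mathrm{Id}$, and combines positive semidefiniteness with Theorem~\ref{thm:bottomrow} to conclude that $\theta_\star=-c_r$ is the least eigenvalue. One harmless slip: $U^\perp/U$ is a $2$-dimensional Hermitian space, so each clique $C_U$ has $N_1=q+1$ members, not $q^3+1$; the clique size plays no role in $BB^{\mathsf T}=A+c_r\,\mathrm{Id}$, so the argument is unaffected.
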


\begin{proof}
For a totally isotropic $\F_{q^{2}}$-rational $Z\subset\mathbb P^{2r-1}$ with $\dim Z=r-2$,
the quotient $\widetilde Z^{\perp}/\widetilde Z$ carries a nondegenerate
rank-$2$ Hermitian form, so the generators $P\in\mathcal G$ with $P\supset Z$ --- the
\emph{pencil} through $Z$ --- number $N_1=q+1$, any two of them
meeting exactly in $Z$. Every generator $P$ lies on exactly
$\binom{r}{1}_{q^{2}}=c_r$ pencils, one for each hyperplane of
$P\cong\mathbb P^{r-1}$, and distinct
pencils meet in at most one generator, since if $P\ne P'$ both
contained two distinct such $Z_1,Z_2$, then $\widetilde P\cap\widetilde P'$
would contain $\widetilde Z_1+\widetilde Z_2$, which has dimension $r$,
forcing $P=P'$. Consequently, if $C$ is the generator--pencil
incidence matrix, that is, the $\{0,1\}$-matrix whose
rows are indexed by $\mathcal G$ and whose columns are indexed by the
totally isotropic $\F_{q^{2}}$-rational $Z\subset\mathbb P^{2r-1}$ with $\dim Z=r-2$, with
$C_{P,Z}=1$ if $Z\subset P$ and $C_{P,Z}=0$ otherwise, so that
$(CC^{\mathsf T})_{P,P'}$ is the number of such $Z$ contained in
$P\cap P'$, then $CC^{\mathsf T}=A+c_rI$; being a Gram matrix,
$CC^{\mathsf T}\succeq0$, so every eigenvalue of $A$ is $\ge-c_r$.
Since $\theta_\star=-c_r$ is itself an eigenvalue
(Theorem~\ref{thm:bottomrow}), it must be the least one.
\end{proof}

Since Lemma~\ref{rem:thetastarmin} identifies the index $\star$ as $r$, we write
$\theta_r$ and $W_r$ for $\theta_\star$ and $W_\star$ from now on.

\begin{lemma}[Orthogonal decomposition]
\label{lem:Wprime}
Put $W':=(W_0\oplus\Wst)^{\perp}\subset\Ql^{\mathcal G}$, the orthogonal complement for
$\langle\,,\,\rangle_{\mathrm{std}}$. Then
\[
  \Ql^{\mathcal G}\;=\;W_0\perp\Wst\perp W' .
\]
\end{lemma}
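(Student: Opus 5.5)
The claim reduces to a single point: $W_0$ and $\Wst$ are orthogonal for $\langle\,,\,\rangle_{\mathrm{std}}$. Once that is known, $W_0\oplus\Wst$ is an orthogonal direct sum inside $\Ql^{\mathcal G}$. Taking $W'$ to be its orthogonal complement then gives $\Ql^{\mathcal G}=W_0\perp\Wst\perp W'$, provided the standard form is nondegenerate on $W_0\oplus\Wst$. Over $\Ql$ this nondegeneracy is not automatic, since $\Ql$ is not ordered. I will handle it by working over $\Q$ first and extending scalars.

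\textbf{Step 1: orthogonality.} Since $A$ is symmetric, for $u\in W_0$ and $w\in\Wst$ we have
\[
  b_0\langle u,w\rangle_{\mathrm{std}}=\langle Au,w\rangle_{\mathrm{std}}=\langle u,Aw\rangle_{\mathrm{std}}=\theta_r\langle u,w\rangle_{\mathrm{std}} .
\]
Because $\theta_r=-c_r<0<b_0$, it follows that $\langle u,w\rangle_{\mathrm{std}}=0$. This uses only Proposition~\ref{prop:gram}(iii) and Theorem~\ref{thm:bottomrow}, with no need for the full spectral decomposition.

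\textbf{Step 2: nondegeneracy via a rational structure.} Both $A$ and $\theta_r$ are rational, since $A$ is an integer matrix and $\theta_r=-c_r\in\Z$. Hence $\Wst=\ker(A-\theta_r)$ is the base change to $\Ql$ of the rational kernel $\Wst^{\Q}\subset\Q^{\mathcal G}$; likewise $W_0=\Ql\mathbf 1$ comes from $\Q\mathbf 1$. On $\Q^{\mathcal G}$ the standard form is positive definite. So its restriction to $W_0^{\Q}\oplus\Wst^{\Q}$ is nondegenerate, and
\[
  \Q^{\mathcal G}=(W_0^{\Q}\oplus\Wst^{\Q})\oplus(W_0^{\Q}\oplus\Wst^{\Q})^{\perp}.
\]
Nondegeneracy of a form is detected by the Gram determinant on a rational basis, and that determinant is unchanged by extension of scalars. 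Tensoring with $\Ql$ therefore gives $\Ql^{\mathcal G}=(W_0\oplus\Wst)\oplus W'$. Here I use that the orthogonal complement commutes with base change, since it is the kernel of a rational linear map.

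\textbf{Step 3: assembling the decomposition.} Steps 1 and 2 together give the stated orthogonal decomposition. Its summands are $W_0$ and $\Wst$, which are mutually orthogonal by Step 1, and $W'$, which is orthogonal to both by definition.

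\textbf{Main obstacle.} The only subtle point is Step 2, namely nondegeneracy of the standard form over $\Ql$. Over $\Ql$, isotropic vectors can exist, so the usual argument that a subspace plus its complement fill the whole space can fail. The rationality of $\theta_r$ together with positivity over $\Q$ resolves this. Everything else is immediate from the symmetry of $A$.
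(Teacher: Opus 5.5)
Your proposal is correct and follows essentially the same route as the paper. The paper also gets $W_0\perp\Wst$ from the symmetry of $A$ and $b_0>0>\theta_r$, and obtains nondegeneracy on $W_0\oplus\Wst$ by noting that both spaces come from $\Q\mathbf 1$ and $\ker_\Q(A-\theta_r)$. There the Gram matrix is invertible by positive definiteness, and it stays invertible over $\Ql$.
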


\begin{proof}
For $x\in W_0$ and $y\in\Wst$ we have
$b_0\,x^{\mathsf T}y=(Ax)^{\mathsf T}y=x^{\mathsf T}Ay=\theta_r\,x^{\mathsf T}y$, and
$b_0>0>\theta_r$; hence $W_0\perp\Wst$. Both spaces are of the form
$U\otimes_\Q\Ql$ with $U\subset\Q^{\mathcal G}$: for $W_0$ take $U=\Q\mathbf 1$, and
for $\Wst=\ker(A-\theta_r)$ take $U=\ker_\Q(A-\theta_r)$, since $A-\theta_r$ is an
integer matrix ($\theta_r=-c_r\in\Z$) and a basis of its kernel found by Gaussian
elimination over $\Q$ remains a basis over $\Ql$. The standard inner product is
positive definite on $\Q^{\mathcal G}$, so its Gram matrix in a $\Q$-basis of
$\Q\mathbf 1\oplus\ker_\Q(A-\theta_r)$ is invertible, and it stays invertible over
$\Ql$. Thus $\langle\,,\,\rangle_{\mathrm{std}}$ is nondegenerate on $W_0\oplus\Wst$, and
$\Ql^{\mathcal G}=(W_0\oplus\Wst)\oplus W'$.
\end{proof}

\begin{corollary}
\label{cor:Mprime}
$\mathrm{im}\,M'=\Wst$, and $M'$ acts on $\Wst$ by a nonzero scalar
$\eta_{r}$ and annihilates $W_0\perp W'$. Explicitly
\[
  \eta_{r}\;=\;(-1)^{r}q^{\,r-1}(q+1)N_{r-1},
  \qquad
  m_{r}\;:=\;\dim\Wst\;=\;\frac{q\,(q^{2r-1}+1)}{q+1}.
\]
\end{corollary}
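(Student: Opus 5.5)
We need to prove Corollary \ref{cor:Mprime}. First, $M'$ is symmetric and commutes with $A$, since both lie in the span of the $A_j$, which are polynomials in $A$ by Proposition~\ref{prop:gram}(ii). By Theorem~\ref{thm:bottomrow}, $\mathrm{im}\,M'\subseteq\Wst$. Symmetry of $M'$ then gives $\ker M'=(\mathrm{im}\,M')^{\perp}\supseteq\Wst^{\perp}=W_0\perp W'$, using Lemma~\ref{lem:Wprime}. So $M'$ annihilates $W_0\perp W'$.

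Next we show that $M'$ is a scalar on $\Wst$. By Proposition~\ref{prop:gram}(iv), with $\theta_r=-c_r\in\Q$, the matrix $M'$ acts on $\Wst$ by a scalar $\eta_r$. This gives $M'^2=\eta_rM'$. Indeed, apply $M'$ to any vector decomposed along $W_0\perp\Wst\perp W'$: only the $\Wst$-component survives and is scaled by $\eta_r$. Taking the trace of $M'=\eta_r\cdot(\text{orthogonal projection onto }\Wst)$ gives $\operatorname{tr}M'=\eta_r m_r$. Taking the $(P,P)$ diagonal entry of $M'^2=\eta_rM'$ gives
\[
  \sum_{P'}M'^2_{P,P'}=\eta_r\,M'_{P,P}.
\]
Now $M'_{P,P}=(-q)^r$, and $\sum_{P'}M'^2_{P,P'}=\sum_j k_j q^{2(r-j)}$. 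By Lemma~\ref{lem:sumkj}, $\sum_j k_jq^{-2j}=\frac{q+1}{q}N_{r-1}$, so
\[
  \sum_j k_jq^{2(r-j)}=q^{2r-1}(q+1)N_{r-1}.
\]
Dividing by $(-q)^r$ gives $\eta_r=(-1)^rq^{r-1}(q+1)N_{r-1}$, which is nonzero. Since $M'\neq 0$ and $\eta_r\ne0$, $M'$ restricted to $\Wst$ is invertible, so $\mathrm{im}\,M'=\Wst$.

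For the dimension, $\operatorname{tr}M'=N_r(-q)^r$, so
\[
  m_r=\frac{N_r(-q)^r}{\eta_r}=\frac{qN_r}{(q+1)N_{r-1}}=\frac{q(q^{2r-1}+1)}{q+1},
\]
using $N_r/N_{r-1}=q^{2r-1}+1$.

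The only delicate point is that the projection description requires $M'$ to vanish on $W_0\perp W'$ and to be scalar on $\Wst$. Both follow from the symmetry argument and Proposition~\ref{prop:gram}(iv), so this is routine. The main computational step is the diagonal-entry identity, which reduces to the second sum in Lemma~\ref{lem:sumkj}.
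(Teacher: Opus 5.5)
Your proof is correct and follows the paper's argument essentially step for step. You get the scalar action on $\Wst$ from Proposition~\ref{prop:gram}(iv) and $\ker M'\supseteq W_0\perp W'$ from the key identity plus symmetry; then $\eta_r$ comes from a diagonal entry of $M'^{2}=\eta_rM'$ via Lemma~\ref{lem:sumkj}, and $m_r$ from the trace, with your explicit remark that $\eta_r\neq0$ forces $\mathrm{im}\,M'=\Wst$ spelling out what the paper leaves implicit in describing $M'$ as $\eta_r$ times the orthogonal projection onto $\Wst$.
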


\begin{proof}
By Proposition~\ref{prop:gram}(iv), applied with $\theta=\theta_r\in\Z$, $M'$
acts on $\Wst=\ker(A-\theta_r)$ by a scalar $\eta_r$. By
Theorem~\ref{thm:bottomrow}, $\mathrm{im}\,M'\subseteq\Wst$; since
$M'$ is symmetric,
$\ker M'=(\mathrm{im}\,M')^{\perp}\supseteq\Wst^{\perp}
=W_0\perp W'$ (Lemma~\ref{lem:Wprime}). Hence
$M'$ is $\eta_{r}$ times the orthogonal projection onto $\Wst$;
equivalently
\begin{equation}
\label{eq:Msqr}
  M'^{\,2}\;=\;\eta_{r}\,M' .
\end{equation}

To evaluate $\eta_{r}$, compare a single diagonal entry of
\eqref{eq:Msqr}. Fix $P\in\mathcal G$. Then $M'_{P,P}=(-q)^{r}$, while
by \eqref{eq:Mprime-entries} and Lemma~\ref{lem:kj}
\[
  \bigl(M'^{\,2}\bigr)_{P,P}=\sum_{P'\in\mathcal G}\bigl(M'_{P,P'}\bigr)^{2}
  =\sum_{j=0}^{r}k_j\,q^{\,2(r-j)}
  =q^{2r}\sum_{j=0}^{r}k_j\,q^{-2j}
  =q^{2r}\cdot\frac{q+1}{q}N_{r-1},
\]
the last step by Lemma~\ref{lem:sumkj}. Therefore
\[
  \eta_{r}=\frac{\bigl(M'^{\,2}\bigr)_{P,P}}{M'_{P,P}}
  =\frac{q^{2r}\cdot\frac{q+1}{q}N_{r-1}}{(-q)^{r}}
  =(-1)^{r}q^{\,r-1}(q+1)N_{r-1}\;\ne\;0 .
\]

For the multiplicity, take traces: $M'$ is $\eta_{r}$ times a
projection of rank $\dim\Wst$, so $\mathrm{tr}\,M'=m_{r}\eta_{r}$;
on the other hand only $A_0=\mathrm{Id}$ contributes to the diagonal,
so $\mathrm{tr}\,M'=(-q)^{r}N_r$. Using $N_r=N_{r-1}(q^{2r-1}+1)$,
\[
  m_{r}=\frac{(-q)^{r}N_r}{\eta_{r}}
  =\frac{(-q)^{r}N_{r-1}(q^{2r-1}+1)}{(-1)^{r}q^{\,r-1}(q+1)N_{r-1}}
  =\frac{q\,(q^{2r-1}+1)}{q+1} . \qedhere
\]
\end{proof}

\subsection{The spectrum of $M$}
\label{subsec:spectrum}

Combining $M=\frac{1}{q+1}(J-M')$ with the results of
\S\ref{subsec:key}, we obtain the complete spectrum of $M$: it has
exactly two nonzero eigenvalues.

\begin{theorem}[Spectrum of $M$]
\label{thm:spectrum}
For every $r\ge2$ and every prime power $q$, the symmetric matrix $M$
has exactly two nonzero eigenvalues:
\[
\begin{aligned}
  &\mu_0=\frac{N_r}{q+1}
  &&\text{on }W_0,
  &&m_0=1,\\[2pt]
  &\mu_{r}=(-1)^{r-1}q^{\,r-1}N_{r-1}
  &&\text{on }\Wst,
  &&m_{r}=\frac{q(q^{2r-1}+1)}{q+1},
\end{aligned}
\]
and $M$ annihilates $W'$. In
particular
\[
  \mathrm{rank}\,M\;=\;1+m_{r}\;=\;\frac{q(q^{2r-1}+1)}{q+1}+1,
  \qquad
  \ker M\;=\;W' .
\]
\end{theorem}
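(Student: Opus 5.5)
The plan is to read the spectrum of $M$ off the decomposition \eqref{eq:Mreform}, $M=\frac{1}{q+1}(J-M')$, using the orthogonal splitting $\Ql^{\mathcal G}=W_0\perp\Wst\perp W'$ of Lemma~\ref{lem:Wprime}. On each of the three summands, $J$ and $M'$ each act by a known scalar, so $M$ acts by a known scalar as well. The only inputs needed are Corollary~\ref{cor:Mprime}, which gives how $M'$ acts, and the elementary description of $J=\mathbf 1\mathbf 1^{\mathsf T}$.

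First, I treat $J$. It acts on $W_0=\Ql\mathbf 1$ by $N_r=|\mathcal G|$ and kills $\mathbf 1^{\perp}$. Since $\Wst\perp W_0$ and $W'\perp W_0$ by Lemma~\ref{lem:Wprime}, both $\Wst$ and $W'$ lie in $\mathbf 1^{\perp}$, so $J$ annihilates them.

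Next, I treat $M'$. By Corollary~\ref{cor:Mprime}, it acts by $\eta_r$ on $\Wst$ and annihilates $W_0\perp W'$. Combining the two:
\begin{itemize}
\item On $W_0$, $M$ acts by $N_r/(q+1)$, consistent with Lemma~\ref{lem:eig0}(ii).
\item On $\Wst$, $M$ acts by $-\eta_r/(q+1)=(-1)^{r-1}q^{r-1}N_{r-1}$.
\item On $W'$, $M$ acts by $0$.
\end{itemize}
The multiplicities are $m_0=1$ and $m_r=\dim\Wst$, both taken from Corollary~\ref{cor:Mprime}.

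It remains to check that these two nonzero eigenvalues really are nonzero and are the only ones. Both are visibly nonzero: $N_r>0$, and $q^{r-1}N_{r-1}>0$. They are also distinct, because their signs differ when $r$ is even, and when $r$ is odd, $N_r/(q+1)=N_{r-1}(q^{2r-1}+1)/(q+1)$ differs from $q^{r-1}N_{r-1}$. Distinctness is not even needed, since the eigenvalues are attached to specified subspaces.

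Because the three summands span $\Ql^{\mathcal G}$ and $M$ acts diagonally with respect to them, we get $\mathrm{rank}\,M=1+m_r$ and $\ker M=W'$ exactly. The last equality holds because $M$ is injective on $W_0\oplus\Wst$, so any kernel vector must have zero components there.

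There is no real obstacle. The only care needed is that $W'$ is defined as an orthogonal complement, not as a sum of eigenspaces of $A$. That is precisely why I use Lemma~\ref{lem:Wprime} to see that $J$ kills $W'$, rather than arguing through Remark~\ref{rem:allW}.
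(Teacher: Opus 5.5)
Your proposal is correct and follows the paper's proof essentially verbatim: you split $M=\frac{1}{q+1}(J-M')$ along $W_0\perp\Wst\perp W'$ (Lemma~\ref{lem:Wprime}), read off the scalars from $J=\mathbf 1\mathbf 1^{\mathsf T}$ and Corollary~\ref{cor:Mprime}, and deduce the rank and kernel. Your distinctness check, which the paper omits, is in fact what the literal phrase ``exactly two'' requires; for odd $r$ it follows from $(q^{2r-1}+1)-q^{r-1}(q+1)=(q^{r}-1)(q^{r-1}-1)>0$.
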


\begin{proof}
By \eqref{eq:Mreform}, $M=\frac{1}{q+1}(J-M')$. The matrix $J$ is
supported on $W_0$ with eigenvalue $N_r$ there, and by
Corollary~\ref{cor:Mprime} the matrix $M'$ is supported on $\Wst$ with
eigenvalue $\eta_{r}$ there; both vanish on $W'$, and
$\Ql^{\mathcal G}=W_0\perp\Wst\perp W'$ by Lemma~\ref{lem:Wprime}. Hence
\[
  \mu_0=\frac{N_r}{q+1},
  \qquad
  \mu_{r}=-\frac{\eta_{r}}{q+1}
  =-\frac{(-1)^{r}q^{\,r-1}(q+1)N_{r-1}}{q+1}
  =(-1)^{r-1}q^{\,r-1}N_{r-1}.
\]
Both are nonzero, so the rank and kernel are as stated, using
$m_{r}$ from Corollary~\ref{cor:Mprime}.
\end{proof}

\subsection{The generators span, and the decomposition}
\label{subsec:span}

Comparing $\mathrm{rank}\,M$ \textup{(}Theorem~\ref{thm:spectrum}\textup{)}
with the Betti number \eqref{eq:bettiherm}, we show that the classes $[P]$
span the middle cohomology, and we identify $W_0$ and $\Wst$ with the
non-primitive line and the primitive part.

\begin{theorem}[Decomposition of $\Ql^{\mathcal G}$]
\label{thm:decomp}
Let $r\ge2$ and let $q$ be any prime power. Then\textup{:}
\begin{enumerate}
\item[\textup{(i)}] The classes $\{[P]:P\in\mathcal G\}$ span the
  middle cohomology:
  \[
    V\;=\;H^{2r-2}\bigl(X_{\overline\F_p},\Ql(r-1)\bigr),
    \qquad \dim_{\Ql}V=\beta_{2r-2}(X)=1+m_{r} .
  \]
\item[\textup{(ii)}] $\ker\phi=\ker M=W'$,
  and this is exactly the radical of $\phi^{*}\langle\,,\,\rangle$. In
  particular there is an orthogonal decomposition
  \[
    \Ql^{\mathcal G}\;=\;W_0\ \perp\ \Wst\ \perp\ W' .
  \]
\item[\textup{(iii)}] $\phi$ maps $W_0$ isomorphically onto the
  non-primitive line $\Ql L^{r-1}$, and $\Wst$ isomorphically onto the
  primitive part $\mathcal P^{2r-2}(X)$.
\item[\textup{(iv)}] Under $\phi|_{\Wst}:\Wst\xrightarrow{\sim}
  \mathcal P^{2r-2}(X)$, the Poincar\'e pairing pulls back to $\mu_{r}$ times
  the standard inner product on $\Wst$.
\item[\textup{(v)}] Writing $\mathrm N^{r-1}(X)$ for the group of
  codimension-$(r-1)$ cycles modulo numerical equivalence,
  \[
    \mathrm N^{r-1}(X)\otimes\Ql\;\xrightarrow{\ \sim\ }\;
    H^{2r-2}\bigl(X_{\overline\F_p},\Ql(r-1)\bigr),
  \]
  and this isomorphism carries a $\Q$-structure, spanned by the classes
  $[P]$, on which the intersection form takes values in $\Z$.
\end{enumerate}
\end{theorem}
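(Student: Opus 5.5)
The plan is to play the rank of $M$ against the Betti number, using only linear algebra over $\Ql$. The key inequality is
\[
  \mathrm{rank}\,M\;\le\;\dim V\;\le\;\beta_{2r-2}(X).
\]
It holds because $M$ is the Gram matrix of $\phi^*\langle\,,\,\rangle$, so $x^{\mathsf T}My=\langle\phi(x),\phi(y)\rangle$. Hence $\ker\phi\subseteq\ker M$, and the rank of $M$ is at most $\dim\mathrm{im}\,\phi=\dim V$. By Theorem~\ref{thm:spectrum}, $\mathrm{rank}\,M=1+m_r$. By \eqref{eq:bettiherm}, $1+m_r=\beta_{2r-2}(X)$. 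So every inequality above is an equality, and this proves (i).

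For (ii), equality $\mathrm{rank}\,M=\dim V=N_r-\dim\ker\phi$ together with $\ker\phi\subseteq\ker M$ gives $\ker\phi=\ker M$. Theorem~\ref{thm:spectrum} identifies $\ker M=W'$. The radical of $\phi^*\langle\,,\,\rangle$ is $\ker M$ by definition of a Gram matrix. The orthogonal decomposition is Lemma~\ref{lem:Wprime}.

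For (iii), $\phi$ is injective on $W_0\perp\Wst$, since this is a complement of $\ker\phi$. It therefore maps this space isomorphically onto $V$, which is the whole middle cohomology by (i).

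\begin{itemize}
\item By Lemma~\ref{lem:eig0}(i), $\langle\phi(x),L^{r-1}\rangle=\langle x,\mathbf 1\rangle_{\mathrm{std}}$ for $x\in\Ql^{\mathcal G}$. Since $\Wst\perp W_0=\Ql\mathbf 1$, we get $\phi(\Wst)\subseteq(L^{r-1})^\perp=\mathcal P^{2r-2}(X)$ by Proposition~\ref{prop:lefschetz}(iii).
\item Comparing dimensions, $m_r=\dim\mathcal P^{2r-2}(X)$ by Proposition~\ref{prop:lefschetz}(iv), so $\phi(\Wst)=\mathcal P^{2r-2}(X)$.
\item $\phi(\mathbf 1)=\sum_P[P]$ is nonzero and orthogonal to $\phi(\Wst)=\mathcal P^{2r-2}(X)$, because $M\mathbf 1=\mu_0\mathbf 1$ is orthogonal to $\Wst$. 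Since the orthogonal complement of $\mathcal P^{2r-2}(X)$ is $\Ql L^{r-1}$ by Proposition~\ref{prop:lefschetz}(iv), the vector $\phi(\mathbf 1)$ lies on that line and spans it.
\end{itemize}

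For (iv), let $u,u'\in\Wst$. Then $\langle\phi(u),\phi(u')\rangle=u^{\mathsf T}Mu'=\mu_r\,u^{\mathsf T}u'$, because $M$ acts on $\Wst$ by $\mu_r$.

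Part (v) is the only point needing more than linear algebra.
\begin{itemize}
\item Define the map $\mathrm N^{r-1}(X)\otimes\Ql\to H^{2r-2}$ by the cycle class.
\item \textbf{Surjectivity} holds by (i).
\item \textbf{Injectivity}: suppose a combination $\sum c_i\otimes\lambda_i$ has zero class. Then its intersection number with every cycle vanishes, since intersection numbers are computed by cup products (Fact~\ref{fact:cycle}). I expect the main subtlety here: passing from "all $\Ql$-intersection numbers vanish" to "zero in $\mathrm N^{r-1}(X)\otimes\Ql$". To handle it, use that $\mathrm N^{r-1}(X)$ is generated modulo numerical equivalence by cycles whose classes lie in the span of the $[P]$. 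Choose generators $[P_1],\dots,[P_b]$ forming a basis of $V$. Every cycle $Z$ is then numerically a $\Q$-combination of the $P_i$. Its coefficients are found by solving the Gram system with the intersection numbers $Z\cdot P_k\in\Z$, which has rational coefficients and invertible matrix by Poincaré duality.
\item Hence $\mathrm N^{r-1}(X)\otimes\Q$ is spanned by the $P_i$ and has dimension $b$. The map, which is surjective onto a space of dimension $b$, is therefore an isomorphism.
\end{itemize}
The $\Q$-structure is $\mathrm{span}_\Q\{[P]\}$, which is the image of $\mathrm N^{r-1}(X)\otimes\Q$. The intersection form is $\Z$-valued on $\Z$-combinations of the $[P]$, since the entries of $M$ are integers (Lemma~\ref{lem:f}).
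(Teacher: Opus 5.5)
Your proposal is correct and follows the paper's proof essentially step by step. The rank sandwich $\mathrm{rank}\,M\le\dim V\le\beta_{2r-2}(X)$ is forced to equality by Theorem~\ref{thm:spectrum} and \eqref{eq:bettiherm}, $\phi(\Wst)=\mathcal P^{2r-2}(X)$ follows from the sum-of-coordinates argument plus a dimension count, and $\phi(W_0)=\Ql L^{r-1}$ follows by orthogonality (you use $M\mathbf 1=\mu_0\mathbf 1$ where the paper uses $Mu=\mu_r u$, which is equivalent). In (v), bounding $\dim_\Q\mathrm N^{r-1}(X)\otimes\Q\le b$ via the generators (essentially the argument of Proposition~\ref{lem:Qstructure}) is, if anything, more careful than the paper's nondegeneracy-of-a-basis argument. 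You should still say explicitly, as the paper does, that a numerically trivial cycle has zero class (by (i) and Poincar\'e duality), since this is what makes the cycle class map well defined on $\mathrm N^{r-1}(X)\otimes\Ql$.
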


\begin{proof}
\textup{(i)} Write $\Phi$ for the matrix of $\phi$ and $G$ for the
(invertible, by Fact~\ref{fact:PD}) matrix of $\langle\,,\,\rangle$ in
any basis of $H^{2r-2}(X,\Ql(r-1))$. By Definition~\ref{def:phi},
$M=\Phi^{\mathsf T}G\,\Phi$, whence
\begin{equation}
\label{eq:sandwich}
  \mathrm{rank}\,M\;\le\;\mathrm{rank}\,\Phi=\dim_{\Ql}V
  \;\le\;\dim_{\Ql}H^{2r-2}\bigl(X,\Ql(r-1)\bigr)=\beta_{2r-2}(X).
\end{equation}
The two ends of \eqref{eq:sandwich} coincide: the left-hand side is
$1+m_{r}$ by Theorem~\ref{thm:spectrum}, and the right-hand side is
$\frac{q(q^{2r-1}+1)}{q+1}+1=1+m_{r}$ by \eqref{eq:bettiherm}.
Therefore equality holds throughout, so $\dim V=\beta_{2r-2}(X)$ and
$V=H^{2r-2}(X,\Ql(r-1))$.

\textup{(ii)} Equality of the first two terms of \eqref{eq:sandwich}
says $\mathrm{rank}\,M=\mathrm{rank}\,\Phi$, i.e.\
$\ker\phi=\ker M=W'$ by Theorem~\ref{thm:spectrum}.
Since $M$ is the pullback of a perfect pairing along a surjection,
$W'$ is also the radical of that pullback. Orthogonality of the
three summands is Lemma~\ref{lem:Wprime}.

\textup{(iii)} By (ii), $\phi$ restricts to an isomorphism
$W_0\oplus\Wst\xrightarrow{\sim}H^{2r-2}(X,\Ql(r-1))$; in particular
$\dim\phi(\Wst)=m_{r}$ and $\dim\phi(W_0)=1$.

We first show $\phi(\Wst)=\mathcal P^{2r-2}(X)$. Let $u=\sum_Pu_Pe_P\in\Wst$.
Since $\Wst\perp W_0=\Ql\mathbf 1$ in the standard inner product, we
have $\sum_Pu_P=0$, and therefore, by Lemma~\ref{lem:eig0}(i),
\[
  \bigl\langle\phi(u),L^{r-1}\bigr\rangle
  \;=\;\sum_{P}u_P\bigl\langle[P],L^{r-1}\bigr\rangle
  \;=\;\sum_Pu_P\;=\;0 .
\]
Hence $\phi(\Wst)\subseteq(L^{r-1})^{\perp}=\mathcal P^{2r-2}(X)$ by
Proposition~\ref{prop:lefschetz}(iii). Both spaces have dimension
$m_{r}$ --- the left as just noted, the right by
Proposition~\ref{prop:lefschetz}(iv) --- so the inclusion is an
equality.

Now $\phi(W_0)$ is a complement of $\phi(\Wst)=\mathcal P^{2r-2}(X)$ in
$H^{2r-2}(X,\Ql(r-1))$, and it is orthogonal to it: for $v\in W_0$, $u\in\Wst$,
\[
  \bigl\langle\phi(v),\phi(u)\bigr\rangle
  \;=\;v^{\mathsf T}Mu\;=\;\mu_{r}\,v^{\mathsf T}u\;=\;0 ,
\]
since $M$ acts on $\Wst$ by $\mu_{r}$ and $W_0\perp\Wst$. By
Proposition~\ref{prop:lefschetz}(iv) the orthogonal complement of
$\mathcal P^{2r-2}(X)$ is $\Ql L^{r-1}$, so $\phi(W_0)=\Ql L^{r-1}$. Pairing
$\phi(\mathbf 1)=t\,L^{r-1}$ with $L^{r-1}$ and using
Lemma~\ref{lem:eig0}(i) and Proposition~\ref{prop:lefschetz}(ii) gives
$t(q+1)=N_r$, i.e.
\[
  \sum_{P\in\mathcal G}[P]\;=\;\frac{N_r}{q+1}\,L^{r-1}.
\]

\textup{(iv)} For $u,u'\in\Wst$ we have
$\langle\phi(u),\phi(u')\rangle=u^{\mathsf T}Mu'=\mu_{r}\,
u^{\mathsf T}u'$, since $M$ acts on $\Wst$ by the scalar $\mu_{r}$.

\textup{(v)} By (i) the composite $Z^{r-1}(X)\otimes\Ql\to H^{2r-2}$ is
surjective. Since the classes of algebraic cycles span $H^{2r-2}$ and the pairing
is perfect, a cycle is numerically trivial if and only if its class vanishes.
Moreover the intersection matrix of cycles representing a $\Q$-basis of
$\mathrm N^{r-1}(X)\otimes\Q$ is nondegenerate, so their classes are linearly
independent over $\Ql$. Hence the induced map
on $\mathrm N^{r-1}(X)\otimes\Ql$ is injective as well as surjective. The
intersection numbers $P\cdot P'=f(\dim P\cap P')$ lie in $\Z$ by
Lemma~\ref{lem:f}, so the $\Q$-span of the $[P]$ is a $\Q$-structure on
which the form is integral.
\end{proof}

\begin{lemma}[The decomposition is defined over $\Q$]
\label{lem:qform}
Put
\[
  U_0:=W_0\cap\Q^{\mathcal G},\qquad
  \Ur:=\Wst\cap\Q^{\mathcal G},\qquad
  U':=W'\cap\Q^{\mathcal G}.
\]
Then each of these is a $\Q$-form of the corresponding
$\Ql$-subspace, that is
$W_0=U_0\otimes_{\Q}\Ql$, $\Wst=\Ur\otimes_{\Q}\Ql$ and
$W'=U'\otimes_{\Q}\Ql$; moreover
\[
  \Q^{\mathcal G}\;=\;U_0\ \perp\ \Ur\ \perp\ U',
  \qquad \dim_{\Q}\Ur=m_r ,
\]
and $\phi$ restricts to an isomorphism
$\Ur\xrightarrow{\ \sim\ }\mathcal P^{2r-2}(X)_{\Q}$.
\end{lemma}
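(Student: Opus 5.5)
The plan is to show that each of $W_0$, $\Wst$, $W'$ is the extension of scalars of a subspace cut out by rational linear equations, and then to use that extension of scalars is compatible with intersections with $\Q^{\mathcal G}$. For $W_0$ and $\Wst$ this is already in the proof of Lemma~\ref{lem:Wprime}: $W_0=\Q\mathbf 1\otimes\Ql$, and $\Wst=\ker_\Q(A-\theta_r)\otimes\Ql$ because $A-\theta_r$ is an integer matrix. I will use the general fact that if $U\subset\Q^{\mathcal G}$ is a subspace, then $(U\otimes\Ql)\cap\Q^{\mathcal G}=U$. To see it, choose a $\Q$-basis of $\Q^{\mathcal G}$ extending one of $U$; a rational vector lying in the $\Ql$-span of the basis of $U$ has zero coordinates on the complementary basis vectors. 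Hence $U_0=\Q\mathbf 1$ and $\Ur=\ker_\Q(A-\theta_r)$, and $\dim_\Q\Ur=\dim_{\Ql}\Wst=m_r$ by Corollary~\ref{cor:Mprime}.

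For $W'$, I take $U'$ to be the orthogonal complement of $U_0\oplus\Ur$ in $\Q^{\mathcal G}$ for $\langle\,,\,\rangle_{\mathrm{std}}$. It is the kernel of the rational matrix whose rows are a $\Q$-basis of $U_0\oplus\Ur$. Over $\Ql$, the kernel of that same matrix is $(W_0\oplus\Wst)^\perp=W'$, so $W'=U'\otimes\Ql$, and the general fact then gives $U'=W'\cap\Q^{\mathcal G}$. The decomposition $\Q^{\mathcal G}=U_0\perp\Ur\perp U'$ holds because the standard form is positive definite on $\Q^{\mathcal G}$. Alternatively, it follows from Lemma~\ref{lem:Wprime} by counting dimensions, since the dimensions over $\Q$ match those over $\Ql$.

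Finally, $\phi(\Ur)\subseteq\mathcal P^{2r-2}(X)\cap\mathrm{span}_\Q\{[P]\}=\mathcal P^{2r-2}(X)_\Q$, because $\phi$ sends rational vectors into the $\Q$-span and $\phi(\Wst)\subseteq\mathcal P^{2r-2}(X)$ by Theorem~\ref{thm:decomp}(iii). The map $\phi|_{\Ur}$ is injective, since $\Ur\cap\ker\phi\subseteq\Wst\cap W'=0$ by Theorem~\ref{thm:decomp}(ii).

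Surjectivity is the main point and needs some care with $\Q$- versus $\Ql$-structures. Let $x=\phi(u)$ with $u\in\Q^{\mathcal G}$ and $x$ primitive. Decompose $u=u_0+u_r+u'$ over $\Q$. Then $\phi(u')=0$, $\phi(u_r)$ is primitive, and $\phi(u_0)\in\Ql L^{r-1}$. So $\phi(u_0)=x-\phi(u_r)$ is both primitive and a multiple of $L^{r-1}$. By Proposition~\ref{prop:lefschetz}(iv) these two summands meet only in $0$, so $\phi(u_0)=0$. Then $x=\phi(u_r)$, which proves surjectivity.
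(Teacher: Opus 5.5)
Your proposal is correct and follows the paper's proof essentially step by step, including the same surjectivity argument via the direct sum $\Ql L^{r-1}\oplus\mathcal P^{2r-2}(X)$. The only differences are minor. You obtain the $\Q$-form of $W'$ as the rational orthogonal complement of $U_0\oplus\Ur$, where the paper uses $W'=\ker M$ with $M$ integral. You also state explicitly the fact $(U\otimes\Ql)\cap\Q^{\mathcal G}=U$, which the paper uses tacitly.
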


\begin{proof}
As in the proof of Lemma~\ref{lem:Wprime}, $W_0$ and $\Wst$ are the
$\Ql$-spans of $U_0=\Q\mathbf 1$ and $\Ur=\ker_\Q(A-\theta_r)$; likewise
$W'=\ker M=U'\otimes\Ql$ (Theorem~\ref{thm:spectrum}), as $M$ is an integer matrix (Lemma~\ref{lem:f}).

The three subspaces of $\Q^{\mathcal G}$ are pairwise orthogonal, being
the $\Q$-points of pairwise orthogonal subspaces of $\Ql^{\mathcal G}$,
hence independent; and their $\Q$-dimensions are $1$, $m_r$ and
$N_r-1-m_r$ by Theorem~\ref{thm:spectrum}, summing to
$N_r=\dim_{\Q}\Q^{\mathcal G}$. So the decomposition holds.

Finally, $\phi(\Ur)\subseteq \mathcal P^{2r-2}(X)_{\Q}$: indeed
$\phi(\Ur)\subseteq\phi(\Wst)=\mathcal P^{2r-2}(X)$ by Theorem~\ref{thm:decomp}\textup{(iii)}, and
$\phi(\Ur)\subseteq\mathrm{span}_{\Q}\{[P]\}$ because $\Ur$ consists of
vectors with rational coordinates. Conversely let
$x\in \mathcal P^{2r-2}(X)_{\Q}$ and write $x=\phi(u)$ with
$u\in\Q^{\mathcal G}$. Decomposing $u=u_0+u_r+u'$ as above and
using $\phi(u')=0$ gives $x=\phi(u_0)+\phi(u_r)$ with
$\phi(u_0)\in\Q L^{r-1}$ and $\phi(u_r)\in \mathcal P^{2r-2}(X)$; since $x$ is
primitive and the sum $\Ql L^{r-1}\oplus \mathcal P^{2r-2}(X)$ is direct, we get
$\phi(u_0)=0$ and hence $x=\phi(u_r)\in\phi(\Ur)$. Injectivity is
inherited from Theorem~\ref{thm:decomp}\textup{(iii)}.
\end{proof}

\subsection{Consequence: the Tate conjecture}
\label{sec:tate-cor}

By Theorem~\ref{thm:decomp}, the whole cohomology of $X$ is spanned by
classes of cycles defined over $\F_{q^2}$. We deduce the Tate
conjecture for $X$, and compare the result with a direct count of
rational points.

\begin{theorem}
\label{thm:tate}
Let $r\ge2$, let $q$ be any prime power and $X=\Herm(2r-1,q^2)$. Then
the geometric Frobenius $\Frob_{q^{2}}$ acts on
$H^{2r-2}(X_{\overline\F_p},\Ql)$ by the scalar $q^{2r-2}$, and for
every $k$ the group $H^{2k}(X_{\overline\F_p},\Ql(k))$ is spanned by
algebraic cycle classes. In particular
Conjecture~\textup{\ref{conj:tate}} holds for $X$ unconditionally, and
the Tate conjecture also holds for $X_{K'}$ over every finitely generated
extension $K'$ of $\F_{q^{2}}$, since the cycles above are defined over
$\F_{q^{2}}$ and $H^{2k}(X_{\overline{K'}},\Ql(k))\cong
H^{2k}(X_{\overline\F_p},\Ql(k))$ by smooth and proper base change.
\end{theorem}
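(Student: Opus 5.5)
The plan is to get the Frobenius statement from Theorem~\ref{thm:decomp}(i), and the remaining degrees from Proposition~\ref{prop:lefschetz}(i).

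By Theorem~\ref{thm:decomp}(i), the classes $[P]$ for $P\in\mathcal G$ span $H^{2r-2}(X_{\overline\F_p},\Ql(r-1))$. Each generator $P$ is defined over $\F_{q^2}$ (Lemma~\ref{lem:contained}(ii)). Hence its cycle class in the Tate-twisted group is fixed by the geometric Frobenius $\Frob_{q^2}$; this is the compatibility of the cycle class map with Galois action from Fact~\ref{fact:cycle}. So $\Frob_{q^2}$ acts as the identity on $H^{2r-2}(X_{\overline\F_p},\Ql(r-1))$.

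Removing the twist, $\Ql(r-1)$ is the $(r-1)$-st tensor power of $\Ql(1)$. On $\Ql(1)$ the geometric Frobenius $\Frob_{q^2}$ acts by $(q^2)^{-1}$. Since
\[
  H^{2r-2}(X,\Ql)=H^{2r-2}(X,\Ql(r-1))\otimes\Ql(-(r-1)),
\]
$\Frob_{q^2}$ acts on the untwisted group by the scalar $(q^2)^{r-1}=q^{2r-2}$. The one point needing care is the sign convention for geometric versus arithmetic Frobenius on $\Ql(1)$. With geometric Frobenius the eigenvalue on $\Ql(1)$ is $q_0^{-1}$, consistent with the Weil conjectures: eigenvalues on $H^{2k}$ have absolute value $q_0^{k}$.

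For $k\ne r-1$ with $0\le k\le 2r-2$, Proposition~\ref{prop:lefschetz}(i) gives $H^{2k}(X,\Ql(k))=\Ql\cdot L^k$. Here $L^k$ is the class of a linear section, which is algebraic and defined over $\F_{q^2}$ (indeed over $\F_p$). Outside $0\le k\le 2r-2$ the group is zero. Together with the middle degree, every even cohomology group is spanned by classes of cycles defined over $\F_{q^2}$.

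For Conjecture~\ref{conj:tate} over $\F_{q^2}$, the image of $\mathrm{cl}^k$ is contained in the Frobenius invariants, since cycles over the base field give invariant classes. Conversely, the whole group $H^{2k}(X,\Ql(k))$ is spanned by images of $\F_{q^2}$-cycles, so the image is everything, and in particular contains the invariants. Hence the two coincide.

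For a finitely generated extension $K'$ of $\F_{q^2}$, smooth and proper base change identifies $H^{2k}(X_{\overline{K'}},\Ql(k))$ with $H^{2k}(X_{\overline\F_p},\Ql(k))$. This identification is compatible with cycle classes of cycles pulled back from $\F_{q^2}$, and those classes span. So again the image of the cycle class map is the whole group, which contains the Galois invariants.

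No step is a real obstacle. The only bookkeeping is the twist and the geometric-Frobenius convention, and checking that the base change isomorphism respects cycle classes. The latter is standard, by functoriality of the cycle class under pullback.
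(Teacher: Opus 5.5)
Your proposal is correct and follows essentially the same route as the paper: the generator classes are fixed by $\Frob_{q^2}$ because the generators are defined over $\F_{q^2}$, they span by Theorem~\ref{thm:decomp}(i), untwisting gives the scalar $q^{2r-2}$, and the other even degrees are handled by Proposition~\ref{prop:lefschetz}(i). One small quibble is that Galois-equivariance of the cycle class map is a standard fact but is not part of Fact~\ref{fact:cycle} as the paper states it; your explicit checks of both inclusions in Conjecture~\ref{conj:tate} and of the base change to $K'$ just spell out what the paper leaves implicit.
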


\begin{proof}
Every $P\in\mathcal G$ is defined over $\F_{q^{2}}$
(Lemma~\ref{lem:contained}(ii)), so $\Frob_{q^{2}}$ fixes each $P\in\mathcal G$
and hence each class $[P]\in H^{2r-2}(X,\Ql(r-1))$. By
Theorem~\ref{thm:decomp}(i) these classes span, so $\Frob_{q^{2}}$
acts as the identity on $H^{2r-2}(X,\Ql(r-1))$, i.e.\ by $q^{2r-2}$ on
$H^{2r-2}(X,\Ql)$. Hence every class in middle degree is a Tate class
and all are algebraic. For $k\ne r-1$ the statement is trivial, since
$H^{2k}(X,\Ql)\cong H^{2k}(\mathbb P^{2r-1},\Ql)$ is spanned by $L^{k}$ by
Proposition~\ref{prop:lefschetz}(i).
\end{proof}

\section{The standard conjecture of Hodge type for $\Herm(2r-1,q^2)$}
\label{sec:hodge}

In this section we prove the main theorem. We first make
Conjecture~\ref{conj:hdg} explicit for $X$, then show that $\phi$
identifies the form on the primitive part with a constant multiple of
the standard inner product, the constant being the eigenvalue $\mu_r$
of Theorem~\ref{thm:spectrum}, and deduce the conjecture.

\subsection{The statement for $X$}

For $Y=X=\Herm(2r-1,q^{2})$ we have $n=2r-2$, and in
Conjecture~\ref{conj:hdg} the only interesting case is $i=r-1$, where
$n-2i=0$, so that the form in question is
$(-1)^{r-1}\langle\,,\,\rangle$ itself; by
Theorem~\ref{thm:decomp}(i) the algebraic classes are all of
$H^{2r-2}(X,\Ql(r-1))$, and by Proposition~\ref{prop:lefschetz}(iv)
the primitive
part is $\mathcal P^{2r-2}(X)$. The relevant $\Q$-structure, in the sense of
\S\ref{subsec:hodge-survey}, is
\begin{equation}
\label{eq:Qstructure}
  \mathcal P^{2r-2}(X)_{\Q}\;:=\;\mathcal P^{2r-2}(X)\,\cap\,
  \mathrm{span}_{\Q}\bigl\{[P]\;:\;P\in\mathcal G\bigr\},
\end{equation}
on which $\langle\,,\,\rangle$ takes values in $\Z$ by
Theorem~\ref{thm:decomp}(v), and which spans $\mathcal P^{2r-2}(X)$ over
$\Ql$. So the conjecture takes the following form.

\begin{conjecture}[Standard conjecture of Hodge type for
$\Herm(2r-1,q^{2})$]
\label{conj:std}
Let $X=\Herm(2r-1,q^{2})$. The symmetric bilinear form
$(x,y)\mapsto(-1)^{r-1}\langle x,y\rangle$ is positive definite on
the $\Q$-vector space $\mathcal P^{2r-2}(X)_{\Q}$ of \eqref{eq:Qstructure}.
\end{conjecture}

\begin{proposition}[The full space of algebraic classes]
\label{lem:Qstructure}
$\mathcal A^{r-1}(X)=\mathrm{span}_{\Q}\{[P]:P\in\mathcal G\}$, and
consequently
\[
  \mathcal A^{r-1}_{\mathrm{prim}}(X)\;=\;\mathcal P^{2r-2}(X)_{\Q} .
\]
So Conjecture~\ref{conj:std} is exactly Conjecture~\ref{conj:hdg} for
$X$, with no ambiguity about which $\Q$-structure is meant.
\end{proposition}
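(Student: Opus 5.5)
The inclusion $\mathrm{span}_{\Q}\{[P]:P\in\mathcal G\}\subseteq\mathcal A^{r-1}(X)$ holds because each generator is an algebraic cycle of codimension $r-1$ on $X$ (Lemma~\ref{lem:contained}). For the reverse inclusion, let $Z$ be any codimension-$(r-1)$ cycle on $X_K$. We must show that $[Z]$ is a $\Q$-combination of the $[P]$, not merely a $\Ql$-combination, which is all that Theorem~\ref{thm:decomp}(i) gives a priori.

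The plan is to use the perfect pairing together with integrality. First choose $P_1,\dots,P_\beta\in\mathcal G$, with $\beta=\beta_{2r-2}(X)$, whose classes form a $\Ql$-basis of $H^{2r-2}(X,\Ql(r-1))$; this is possible by Theorem~\ref{thm:decomp}(i). Their Gram matrix $G=(P_a\cdot P_b)$ is an integer matrix (Lemma~\ref{lem:f}), and it is invertible because the Poincar\'e pairing is perfect (Fact~\ref{fact:PD}). Now write $[Z]=\sum_a\lambda_a[P_a]$ with $\lambda_a\in\Ql$. Pairing with $[P_b]$ gives $G\lambda=c$, where $c_b=\deg(Z\cdot P_b)\in\Z$ because the intersection number of two algebraic cycles is an integer (Fact~\ref{fact:cycle}). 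Hence $\lambda=G^{-1}c\in\Q^\beta$, so $[Z]\in\mathrm{span}_{\Q}\{[P]\}$. Taking $\Q$-linear combinations of such $Z$ gives the equality $\mathcal A^{r-1}(X)=\mathrm{span}_\Q\{[P]\}$.

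For the primitive parts, $\mathcal A^{r-1}_{\mathrm{prim}}(X)$ is by definition $\mathcal A^{r-1}(X)\cap\mathcal P^{2r-2}(X)$, which is $\mathcal P^{2r-2}(X)_\Q$ by \eqref{eq:Qstructure} and the first equality. Here the primitive part of Conjecture~\ref{conj:hdg} in degree $i=r-1$ is the kernel of $L^{n-2i+1}=L$, which equals $(L^{r-1})^\perp=\mathcal P^{2r-2}(X)$ by Proposition~\ref{prop:lefschetz}(iii). The form in Conjecture~\ref{conj:hdg} has $L^{n-2i}=L^0$, so it is exactly $(-1)^{r-1}\langle\,,\,\rangle$, and the two conjectures therefore coincide.

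The only real point is the rationality of the coefficients $\lambda$, which is settled by the integrality of intersection numbers. One must also make sure the cycles are taken over $K=\overline\F_p$, as Conjecture~\ref{conj:hdg} requires; that causes no difficulty, since the argument never used a field of definition for $Z$.
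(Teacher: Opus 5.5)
Your proposal is correct and follows essentially the same route as the paper: fix a basis of the middle cohomology made of rational combinations of generator classes, note that the Gram matrix is rational and invertible by Poincar\'e duality, and conclude that the coefficients $\lambda=G^{-1}(G\lambda)$ of any algebraic class are rational because its pairings with the basis are intersection numbers. The only difference is cosmetic: you take the basis to be a subset of the $[P]$ directly from Theorem~\ref{thm:decomp}(i), whereas the paper takes a $\Q$-basis of $\mathrm{span}_{\Q}\{[P]\}$ and invokes Theorem~\ref{thm:decomp}(v) to know it is also a $\Ql$-basis.
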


\begin{proof}
Write $V_{\Q}=\mathrm{span}_{\Q}\{[P]\}$; clearly
$V_{\Q}\subseteq\mathcal A^{r-1}(X)$. By
Theorem~\ref{thm:decomp}(i),(v), $V_{\Q}$ has a $\Q$-basis
$v_1,\dots,v_{\beta}$, where $\beta=\beta_{2r-2}(X)$, which is also a
$\Ql$-basis of $H^{2r-2}(X,\Ql(r-1))$, and the Gram matrix
$G=\bigl(\langle v_i,v_j\rangle\bigr)$ has rational entries and is
invertible, the pairing being perfect (Fact~\ref{fact:PD}).

Let $\alpha\in\mathcal A^{r-1}(X)$ and write $\alpha=\sum_i\lambda_iv_i$
with $\lambda_i\in\Ql$. For each $j$, $\langle\alpha,v_j\rangle$ is an
intersection number of algebraic cycles, hence rational; so the vector
$G\lambda$ has rational entries, and $\lambda=G^{-1}(G\lambda)\in\Q^{\beta}$. Thus
$\alpha\in V_{\Q}$, proving the first claim. Intersecting with
$\mathcal P^{2r-2}(X)$ gives the second, since
$\mathcal A^{r-1}_{\mathrm{prim}}(X)=\mathcal A^{r-1}(X)\cap
\mathcal P^{2r-2}(X)$ by definition.
\end{proof}

\subsection{The proof}

We now prove the main result: combining the spectral
computation of Section~\ref{sec:decomp} with the cycle map, we show
that the form on $\mathcal P^{2r-2}(X)_{\Q}$ is a positive multiple of a
euclidean inner product.

\begin{theorem}[Main theorem]
\label{thm:main}
Let $r\ge2$, let $q$ be any prime power and set $X=\Herm(2r-1,q^{2})$.
Put
\[
  \kappa\;:=\;q^{\,r-1}N_{r-1}
  \;=\;q^{\,r-1}\prod_{i=1}^{r-1}\bigl(q^{2i-1}+1\bigr)
  \;\in\;\Z_{>0},
\]
let $\Q^{\mathcal G}\subset\Ql^{\mathcal G}$ carry the standard inner
product $\langle u,u'\rangle_{\mathrm{std}}=\sum_{P}u_Pu'_P$, and let
$\Ur=\Wst\cap\Q^{\mathcal G}\subset\Q^{\mathcal G}$
be the $\Q$-form \textup{(}Lemma~\textup{\ref{lem:qform}}\textup{)} of the
eigenspace $\Wst$ of Theorem~\textup{\ref{thm:bottomrow}}
\textup{(}see Lemma~\textup{\ref{rem:thetastarmin}}\textup{)}. Then $\phi$
restricts to an isomorphism of $\Q$-vector spaces
\[
  \psi\;:=\;\phi|_{\Ur}\;:\;\Ur\;\xrightarrow{\ \sim\ }\;
  \mathcal P^{2r-2}(X)_{\Q},
\]
and this isomorphism multiplies the form by the single constant
$\kappa$\textup{:}
\begin{equation}
\label{eq:mainiso}
  (-1)^{r-1}\bigl\langle\psi(u),\psi(u')\bigr\rangle
  \;=\;\kappa\cdot\langle u,u'\rangle_{\mathrm{std}}
  \qquad\text{for all }u,u'\in\Ur .
\end{equation}
Equivalently, let $u_1,\dots,u_{m_{r}}$ be any basis of $\Ur$, and
let $G=\bigl(\langle u_a,u_b\rangle_{\mathrm{std}}\bigr)_{a,b}$ be its
Gram matrix for the standard inner product. Then the Gram matrix of
the form
$(-1)^{r-1}\langle\,,\,\rangle$ in the corresponding basis
$\psi(u_1),\dots,\psi(u_{m_{r}})$ of $\mathcal P^{2r-2}(X)_{\Q}$ is exactly
$\kappa\,G$.
\end{theorem}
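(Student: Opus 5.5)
The plan is to assemble the theorem directly from Lemma~\ref{lem:qform} and Theorem~\ref{thm:decomp}(iv), so almost no new computation is needed. First, the isomorphism $\psi=\phi|_{\Ur}:\Ur\xrightarrow{\sim}\mathcal P^{2r-2}(X)_{\Q}$ is exactly the last assertion of Lemma~\ref{lem:qform}. That lemma rests on three facts: $\Ur=\ker_{\Q}(A-\theta_r)$ is a $\Q$-form of $\Wst$; the decomposition $\Q^{\mathcal G}=U_0\perp\Ur\perp U'$ holds; and $\phi(U')=0$, $\phi(U_0)\subset\Q L^{r-1}$. So I would simply quote it. The well-definedness of the target as a $\Q$-space on which the form is $\Z$-valued is Theorem~\ref{thm:decomp}(v).

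Second, for the scaling identity \eqref{eq:mainiso}, take $u,u'\in\Ur\subset\Wst$. By Definition~\ref{def:phi},
\[
\langle\psi(u),\psi(u')\rangle=u^{\mathsf T}Mu'.
\]
By Theorem~\ref{thm:spectrum}, $M$ acts on $\Wst$ by the scalar $\mu_r=(-1)^{r-1}q^{r-1}N_{r-1}$, so $Mu'=\mu_r u'$. Hence
\[
\langle\psi(u),\psi(u')\rangle=\mu_r\langle u,u'\rangle_{\mathrm{std}}.
\]
Multiplying by $(-1)^{r-1}$ gives $(-1)^{r-1}\mu_r=q^{r-1}N_{r-1}=\kappa$. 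This is Theorem~\ref{thm:decomp}(iv) restricted to rational vectors. Positivity of $\kappa$ is clear, since each factor $q^{2i-1}+1$ is a positive integer.

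Third, the Gram-matrix reformulation follows by applying \eqref{eq:mainiso} to $u=u_a$, $u'=u_b$. Since $\psi$ is an isomorphism, the vectors $\psi(u_a)$ form a basis of $\mathcal P^{2r-2}(X)_{\Q}$. In particular, because $\langle\,,\,\rangle_{\mathrm{std}}$ is positive definite on $\Q^{\mathcal G}$, $G$ is positive definite. Therefore $\kappa G$ is positive definite, which recovers Theorem~\ref{thm:main-intro}.

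There is no real obstacle here. The only point needing care is that the spectral statement of Theorem~\ref{thm:spectrum} over $\Ql$ restricts correctly to the rational subspace $\Ur$. This holds because $\Ur\subset\Wst$ by definition, and the scalar $\mu_r$ is rational.
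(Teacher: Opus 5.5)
Your proposal is correct and matches the paper's proof essentially verbatim. You quote Lemma~\ref{lem:qform} for the isomorphism $\psi$, and obtain the scaling identity from Theorem~\ref{thm:decomp}(iv) together with $\mu_r=(-1)^{r-1}q^{r-1}N_{r-1}$ from Theorem~\ref{thm:spectrum}, so that $(-1)^{r-1}\mu_r=\kappa$; the Gram-matrix form then follows immediately, and your closing positivity remark is exactly Corollary~\ref{cor:std}.
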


\begin{proof}
By Lemma~\ref{lem:qform} the map $\psi=\phi|_{\Ur}$ is an
isomorphism of $\Ur$ onto $\mathcal P^{2r-2}(X)_{\Q}$. By Theorem~\ref{thm:decomp}(iv),
\[
  \bigl\langle\psi(u),\psi(u')\bigr\rangle
  \;=\;\mu_{r}\,\langle u,u'\rangle_{\mathrm{std}}
  \qquad (u,u'\in\Ur),
\]
and by Theorem~\ref{thm:spectrum}
$\mu_{r}=(-1)^{r-1}q^{\,r-1}N_{r-1}$, so that
$(-1)^{r-1}\mu_{r}=\kappa$. This is \eqref{eq:mainiso}, and the
reformulation in terms of Gram matrices is immediate.
\end{proof}

{\begin{corollary}
\label{cor:std}
Conjecture~\ref{conj:std} holds for every $r\ge2$ and every prime
power $q$\textup{:} the symmetric bilinear form
$(x,y)\mapsto(-1)^{r-1}\langle x,y\rangle$ is positive definite on
$\mathcal P^{2r-2}(X)_{\Q}$.
\end{corollary}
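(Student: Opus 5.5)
The corollary should follow directly from Theorem~\ref{thm:main}, so the plan is a short deduction. Take a nonzero $x\in\mathcal P^{2r-2}(X)_{\Q}$. By Theorem~\ref{thm:main} (via Lemma~\ref{lem:qform}), $\psi\colon\Ur\to\mathcal P^{2r-2}(X)_{\Q}$ is an isomorphism of $\Q$-vector spaces. So I would write $x=\psi(u)$ for a unique $u\in\Ur\subset\Q^{\mathcal G}$, and $u\neq0$ because $\psi$ is injective.

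Next I would apply \eqref{eq:mainiso} with $u'=u$:
\[
  (-1)^{r-1}\langle x,x\rangle\;=\;\kappa\,\langle u,u\rangle_{\mathrm{std}}\;=\;\kappa\sum_{P\in\mathcal G}u_P^{\,2}.
\]
The coordinates $u_P$ are rational and not all zero, so $\sum_P u_P^2$ is a positive rational number. The constant $\kappa=q^{r-1}\prod_{i=1}^{r-1}(q^{2i-1}+1)$ is a product of positive integers, hence $\kappa>0$. Therefore $(-1)^{r-1}\langle x,x\rangle>0$.

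It remains to note that the form is symmetric and takes rational values on $\mathcal P^{2r-2}(X)_{\Q}$, by Theorem~\ref{thm:decomp}(v). Together with the strict positivity of $(-1)^{r-1}\langle x,x\rangle$ for every nonzero $x$, this is exactly positive definiteness in the sense of Conjecture~\ref{conj:std}. By Proposition~\ref{lem:Qstructure} it is then also Conjecture~\ref{conj:hdg} for $X$ in the middle degree. The other degrees are handled by Proposition~\ref{prop:belowmiddle}(i), although the corollary as stated only concerns the middle degree.

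I expect no step here to be a real obstacle. The one point needing care is that positivity is tested on rational vectors: $\Ur$ is the $\Q$-form of $\Wst$ inside $\Q^{\mathcal G}$, so the standard inner product restricted to it is genuinely a real (indeed rational) positive definite form. This does not depend on any ordering of $\Ql$. All of the substantive work, namely the spectrum of $M$ and the identification of $\Wst$ with the primitive part, was already done in Theorems~\ref{thm:spectrum} and~\ref{thm:decomp}.
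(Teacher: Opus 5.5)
Your proposal is correct and is essentially the paper's own proof. Like the paper, you write each element of $\mathcal P^{2r-2}(X)_{\Q}$ as $\psi(u)$ with $u\in\Ur$, apply \eqref{eq:mainiso} with $u'=u$, and conclude from $\kappa>0$ and the positive definiteness of $\langle\,,\,\rangle_{\mathrm{std}}$ on $\Q^{\mathcal G}$; your remarks on rationality of the form and on the degrees below the middle are harmless but not needed for the statement.
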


\begin{proof}
Every element of $\mathcal P^{2r-2}(X)_{\Q}$ is $\psi(u)$ for a unique
$u\in\Ur$, and by \eqref{eq:mainiso}
$(-1)^{r-1}\langle\psi(u),\psi(u)\rangle=\kappa\,\langle
u,u\rangle_{\mathrm{std}}$, which is positive for $u\ne0$ since
$\kappa>0$ and the standard inner product on $\Q^{\mathcal G}$ is
positive definite.
\end{proof}}

Theorem~\ref{thm:main} and Corollary~\ref{cor:std} together are Theorem~\ref{thm:main-intro} of the introduction; together
with Theorem~\ref{thm:tate} it also gives
Theorem~\ref{thm:tate-intro}.

\section{The trace identity for the lattice of generators}
\label{sec:gentrace}

We return to $X=\Herm(2r-1,q^2)$ for arbitrary $r\ge2$ and $q=p^\nu$, and write
it as the Fermat variety $\sum_{i=0}^{2r-1}x_i^{\,d}=0$, $d=q+1$
(see \eqref{eq:hermform-intro}). Let $\mathrm N=\mathrm N^{r-1}(X)$ be the group
of algebraic cycles of codimension $r-1$ on $X_{\overline\F_p}$ modulo numerical
equivalence. Let $\Lambda\subset\mathrm N$ be the subgroup generated by the classes of
the generators. By Theorem~\ref{thm:decomp}, $\Lambda$ has finite index in $\mathrm N$,
and its Gram matrix is $M$ (Proposition~\ref{prop:gram}).

The diagonal torus $S=\mu_d^{2r}/\mu_d$ acts on $X$ and on $\mathcal G$, and
$|S|=d^{2r-1}$ is prime to $p$. Let $\WO=W(\F_{q^2})$ and let
$\omega\colon\mu_d(\F_{q^2})\to\WO^\times$ be the Teichm\"uller lift. Since the
Teichm\"uller lift is multiplicative, $\omega$ is an injective group homomorphism
onto $\mu_d(\WO)$, i.e.\ a faithful character of $\mu_d(\F_{q^2})$. For
$a\in(\Z/d)^{2r}$ with $\sum a_i=0$ put $\chi_a(g)=\prod_i\omega(g_i)^{a_i}$. This is a
character of $\mu_d^{2r}$ that is trivial on the diagonal, hence a character of
$S$; every character of $S$ is of this form. Call $a$ \emph{primitive} if all
$a_i\ne0$. The $\chi_a$-isotypic part of
the primitive middle cohomology is one-dimensional for primitive $a$ and zero
otherwise \cite[Prop.~7.4]{DMOS}, \cite[Thm.~I(i)]{ShiodaMA1979}; here the
characters take values in $\Q(\mu_d)$, which we embed both in $\WO[1/p]$ and in
$\overline{\Q}_\ell$, and the multiplicity of a character in $\Lambda\otimes\Q(\mu_d)$ can be
computed after either embedding. These are
stated over $\mathbb C$. The proof in \cite{DMOS}, via the covering
$[x_i]\mapsto[x_i^{\,d}]$ of $X$ onto the hyperplane $\sum_iy_i=0$, and an Euler characteristic
count of the tamely ramified eigensheaves, applies verbatim to $\ell$-adic
cohomology in characteristic $p\nmid d$. The trivial part of the middle cohomology is spanned by
$L^{r-1}$, where $L$ is the hyperplane class (\S\ref{sec:intro}); thus
$L^{r-1}\in\mathrm N^{r-1}(X)$ is the class of a linear section of codimension
$r-1$. Hence
\[
  \Lambda\otimes\WO=\WO\,L^{r-1}\ \oplus\ \bigoplus_{a\ \mathrm{primitive}}\Lambda_a,
  \qquad \Lambda_a=e_a(\Lambda\otimes\WO),\quad\operatorname{rank}\Lambda_a=1,
\]
where $e_a=|S|^{-1}\sum_g\chi_a(g)^{-1}g$. Here $e_0[P]=d^{-1}L^{r-1}$, since $[P]\cdot L^{r-1}=1$, and
$(L^{r-1})^2=\deg X=d$ is a unit. Let $e_\Lambda(a)$ be the valuation of the ideal
$\langle\Lambda_a,\Lambda_{-a}\rangle\subset\WO$. For generators $P,P'$ put
\[
  F_a(P,P')=\sum_{g\in S}\chi_a(g)^{-1}M_{gP,P'}=|S|\,\bigl\langle e_a[P],e_{-a}[P']\bigr\rangle .
\]
Then $e_\Lambda(a)=\min_{P,P'}v_p(F_a(P,P'))$, and the $p$-adic elementary divisors of
$M$ are $1$ together with $\{p^{e_\Lambda(a)}\}_{a\ \mathrm{primitive}}$.

\begin{proposition}[trace identity]
\label{prop:gentrace}
For every primitive $a$,
\[
  \sum_{P\in\mathcal R}\frac{F_a(P,P)}{|\mathrm{Stab}_S(P)|}
  \;=\;-\frac{\eta_r}{q+1}\;=\;(-1)^{r+1}q^{\,r-1}N_{r-1},
\]
where $\mathcal R$ is a set of representatives of the $S$-orbits on $\mathcal G$.
\end{proposition}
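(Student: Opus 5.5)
The plan is to recognise the left-hand side as the trace of $M$ composed with the isotypic idempotent $e_a$, acting on $\mathcal G$ by permutations, and then to use the decomposition $M=\frac1{q+1}(J-M')$ from \eqref{eq:Mreform} together with Corollary~\ref{cor:Mprime}.

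\textbf{Step 1: rewrite the left-hand side as a trace.} For $g\in S$ let $T_g$ be the permutation matrix $e_P\mapsto e_{gP}$ on $K_0^{\mathcal G}$, where $K_0=\WO[1/p]$ (or $\overline\Q_\ell$). The matrix $M$ commutes with every $T_g$, because $S\subset\PGU_{2r}(q)$ preserves intersection dimensions and hence each $A_j$. Since $S$ is abelian, $F_a(gP,gP)=F_a(P,P)$. The orbit of $P$ has $|S|/|\mathrm{Stab}_S(P)|$ elements, so
\[
  \sum_{P\in\mathcal R}\frac{F_a(P,P)}{|\mathrm{Stab}_S(P)|}
  =\frac1{|S|}\sum_{P\in\mathcal G}F_a(P,P)
  =\frac1{|S|}\sum_{g\in S}\chi_a(g)^{-1}\,\mathrm{tr}(T_gM)
  =\mathrm{tr}(E_aM),
\]
where $E_a=|S|^{-1}\sum_g\chi_a(g)^{-1}T_g$. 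Because $p\nmid|S|$, $E_a$ is an idempotent commuting with $M$, $J$, $M'$ and $A$. I will check the direction of the convention ($\chi_a$ versus $\chi_{-a}$) when writing this out. It is harmless, since $a$ is primitive if and only if $-a$ is.

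\textbf{Step 2: split using $M=\frac1{q+1}(J-M')$.} First, $\mathrm{tr}(E_aJ)=\mathbf 1^{\mathsf T}E_a\mathbf 1$. Since $T_g\mathbf 1=\mathbf 1$, we get $E_a\mathbf 1=\bigl(|S|^{-1}\sum_g\chi_a(g)^{-1}\bigr)\mathbf 1=0$, because $\chi_a$ is nontrivial for primitive $a$. So the $J$-term vanishes. By Corollary~\ref{cor:Mprime}, $M'=\eta_r\pi_r$, where $\pi_r$ is the orthogonal projection onto $W_r$. The projection $\pi_r$ is a polynomial in $A$ and therefore commutes with $E_a$. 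Hence $E_a\pi_r$ is an idempotent, and $\mathrm{tr}(E_a\pi_r)=\dim E_aW_r$ is the multiplicity of the relevant character in $W_r$. This gives
\[
  \mathrm{tr}(E_aM)=-\frac{\eta_r}{q+1}\dim E_aW_r .
\]

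\textbf{Step 3: compute the multiplicity, which I expect to be the main point.} The map $\phi$ is $S$-equivariant, since $\phi(T_ge_P)=[gP]=g_*[P]$. By Theorem~\ref{thm:decomp}(iii), $\phi$ restricts to an $S$-equivariant isomorphism $W_r\cong\mathcal P^{2r-2}(X)$. We may transport this to $K_0$-coefficients because $W_r$ and the $S$-action are defined over $\Q$, and multiplicities of characters with values in $\Q(\mu_d)$ do not depend on the embedding, as noted before the proposition. The cited result \cite[Prop.~7.4]{DMOS} then says that the $\chi_{\pm a}$-isotypic part of the primitive cohomology is one-dimensional for primitive $a$. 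So $\dim E_aW_r=1$. Substituting the value of $\eta_r$ from Corollary~\ref{cor:Mprime} gives $-\eta_r/(q+1)=(-1)^{r+1}q^{r-1}N_{r-1}$.

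The care needed lies here: one must make sure the comparison of isotypic dimensions between $K_0^{\mathcal G}$ and $\ell$-adic cohomology is legitimate. I would do this by working over $\Q(\mu_d)$, where $E_a$, $\pi_r$ and $W_r$ are all defined, and then base-changing to either side.
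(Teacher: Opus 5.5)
Your proposal is correct and is essentially the paper's argument. Both compute the trace of $M'=\eta_r\pi$ on a single $S$-isotypic component of the permutation module. Both then use the $S$-equivariant isomorphism $\phi\colon W_r\cong\mathcal P^{2r-2}(X)$ together with multiplicity one from \cite{DMOS}, and pass from $M'$ to $M$ via $M'=J-(q+1)M$, where the $J$-term dies because $\sum_g\chi_a(g)=0$. The only difference is bookkeeping: you write the trace globally as $\mathrm{tr}(E_aM)$ and use orbit--stabilizer, whereas the paper reads off the diagonal entries of $M'|_{V_a}$ in the basis $\{f_P\}$ and then replaces $a$ by $-a$.
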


\begin{proof}
Matrices $B=(B_{P,P'})$ indexed by $\mathcal G$ act on
$\Q(\mu_d)^{\mathcal G}=\{f\colon\mathcal G\to\Q(\mu_d)\}$ by
$(Bf)(P)=\sum_{P'}B_{P,P'}f(P')$. Let $V_a$ be the subspace of $f$ with
$f(gP)=\chi_a(g)f(P)$ for $g\in S$. It has the basis $\{f_P\}$, where $P$ runs over
the $P\in\mathcal R$ with $\chi_a|_{\mathrm{Stab}(P)}=1$, and $f_P(gP)=\chi_a(g)$, $f_P=0$
off the orbit of $P$. Since $S$ preserves $\dim(\widetilde P\cap\widetilde P')$, we have
$M'_{gP,gP'}=M'_{P,P'}$, hence $(M'f)(gP)=\chi_a(g)(M'f)(P)$ for $f\in V_a$; that is,
$M'$ preserves $V_a$.

By \eqref{eq:Msqr} and Theorem~\ref{thm:decomp}, $M'=\eta_r\pi$, where $\pi$ is the
orthogonal projection onto $\Wst$. The space $\Wst$ is isomorphic to the primitive
middle cohomology as an $S$-representation, via the equivariant map $\phi$. So
$\pi(V_a)$ is one-dimensional, and $\mathrm{tr}(M'|_{V_a})=\eta_r$.

As $f_R(P)=\delta_{R,P}$ for $R\in\mathcal R$, the diagonal coefficient of
$M'|_{V_a}$ at $f_P$ is
$(M'f_P)(P)=|\mathrm{Stab}(P)|^{-1}\sum_{g\in S}\chi_a(g)M'_{P,gP}$. This sum
vanishes automatically when $\chi_a|_{\mathrm{Stab}(P)}\ne1$, so we may sum over all of
$\mathcal R$. By \eqref{eq:Mreform}, $M'=J-(q+1)M$, and $\sum_g\chi_a(g)=0$. Hence
$\sum_g\chi_a(g)M'_{P,gP}=-(q+1)F_{-a}(P,P)$. Replacing $a$ by $-a$ gives the
claim.
\end{proof}

\begin{corollary}
\label{cor:gentrace}
Let $m=r-1=\frac12\dim X$.
\begin{enumerate}
\item[\textup{(i)}] $e_\Lambda(a)\le\min_Pv_p(F_a(P,P))\le\nu m$ for every primitive $a$.
\item[\textup{(ii)}] The $p$-part of the discriminant group $\Lambda^\vee/\Lambda$ is annihilated by
  $q^m$.
\item[\textup{(iii)}] $q^m\,\mathrm N^m(X)\subset\Lambda$: every algebraic cycle of codimension $m$,
  multiplied by $q^m$, is numerically equivalent to an integral combination of
  generators.
\item[\textup{(iv)}] $v_p[\mathrm N:\Lambda]\le\frac12\,\nu m\,(\beta_{2m}(X)-1)$.
\end{enumerate}
\end{corollary}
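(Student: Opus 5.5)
We derive all four parts from the trace identity of Proposition~\ref{prop:gentrace}, using that $|S|=d^{2r-1}$ is prime to $p$ and that $\Lambda\otimes\WO$ splits into rank-one pieces.

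For (i), the first inequality is immediate, since $e_\Lambda(a)=\min_{P,P'}v_p(F_a(P,P'))$ and the diagonal pairs are among all pairs. For the second, fix a primitive $a$. Each $|\mathrm{Stab}_S(P)|$ divides $|S|$, so it is a $p$-adic unit. By Proposition~\ref{prop:gentrace}, a sum of terms $F_a(P,P)/|\mathrm{Stab}_S(P)|$ equals $\pm q^{r-1}N_{r-1}$. Now $N_{r-1}=\prod(q^{2i-1}+1)\equiv1\pmod p$, so the right side has valuation exactly $v_p(q^{r-1})=\nu m$. By the ultrametric inequality, some summand has valuation $\le\nu m$, and hence $\min_Pv_p(F_a(P,P))\le\nu m$.

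For (ii), we use the stated description of the $p$-adic elementary divisors of $M$: they are $1$ together with $p^{e_\Lambda(a)}$ for primitive $a$. The $p$-part of $\Lambda^\vee/\Lambda$ is therefore $\bigoplus_a\WO/p^{e_\Lambda(a)}$ after tensoring with $\WO$. This is legitimate because $\WO$ is faithfully flat over $\Z_p$, so annihilation can be checked after base change. By (i), every summand is killed by $p^{\nu m}=q^m$.

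For (iii), numerical equivalence gives an injection $\mathrm N\hookrightarrow\Lambda^\vee$, $x\mapsto\langle x,-\rangle$. This is well defined and injective because the pairing on $\mathrm N$ is nondegenerate and $\Lambda$ has finite index in $\mathrm N$ (Theorem~\ref{thm:decomp}(v)). Thus $\Lambda\subset\mathrm N\subset\Lambda^\vee$. By \cite{Shimada2001} the index $[\mathrm N:\Lambda]$ is a power of $p$; if we prefer not to invoke this, note that $[\Lambda^\vee:\Lambda]=|\det M|$ and work prime by prime. Away from $p$, the lattice $\Lambda$ is unimodular: the only non-unit elementary divisors are $p$-powers, which we expect follows from $\det$ of $M$ restricted to the nonkernel part being $\mu_0\mu_r^{m_r}$ up to $p$-units and the decomposition over $\Z_{(\ell)}$. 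Hence $\mathrm N/\Lambda$ lies in the $p$-part of $\Lambda^\vee/\Lambda$, which is killed by $q^m$ by (ii). This gives $q^m\mathrm N\subset\Lambda$. The main obstacle is precisely this prime-to-$p$ step; we would rely on the cited Shimada result that $\mathrm N/\Lambda$ is a $p$-group.

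For (iv), we use $[\Lambda^\vee:\Lambda]=[\Lambda^\vee:\mathrm N][\mathrm N:\Lambda]$ together with $[\Lambda^\vee:\mathrm N]=[\mathrm N^\vee:\Lambda]\ge[\mathrm N:\Lambda]$. The last inequality holds because $\Lambda\subset\mathrm N\subset\mathrm N^\vee\subset\Lambda^\vee$ and $[\mathrm N^\vee:\mathrm N]\ge 1$. It follows that $[\mathrm N:\Lambda]^2$ divides $[\Lambda^\vee:\Lambda]$. Taking $v_p$, we get $2v_p[\mathrm N:\Lambda]\le\sum_a e_\Lambda(a)\le\nu m\cdot\#\{a\text{ primitive}\}$. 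Finally, $\#\{a\text{ primitive}\}=\operatorname{rank}$ of the primitive part, which is $\beta_{2m}-1$.
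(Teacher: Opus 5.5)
Your proof is correct and follows the paper's route: for (i), the ultrametric inequality applied to the trace identity, with stabilizer orders $p$-units and $N_{r-1}\equiv1\pmod p$; for (ii), the rank-one $\WO$-splitting; for (iii), $\Lambda\subset\mathrm N\subset\Lambda^\vee$ together with Shimada's theorem that $\mathrm N/\Lambda$ is a $p$-group; and for (iv), $[\Lambda^\vee:\Lambda]=[\mathrm N:\Lambda]^2[\mathrm N^\vee:\mathrm N]$, which is the paper's relation $\disc\Lambda=[\mathrm N:\Lambda]^2\disc\mathrm N$. Drop the speculative aside in (iii): the product $\mu_0\mu_r^{m_r}$ of the nonzero eigenvalues of $M$ equals $\disc\Lambda$ times the discriminant of $\ker M\cap\Z^{\mathcal G}$, and $N_r/(q+1)$ and $N_{r-1}$ have prime factors other than $p$, so it gives no unimodularity away from $p$ --- the appeal to Shimada is genuinely needed, exactly as in the paper.
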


\begin{proof}
(i) The orders $|\mathrm{Stab}(P)|$ are prime to $p$, and
$v_p(q^{r-1}N_{r-1})=\nu m$ because $N_{r-1}=\prod_{i=1}^{r-1}(q^{2i-1}+1)\equiv1\pmod p$.
So some term of Proposition~\ref{prop:gentrace} has valuation at most $\nu m$, and
$e_\Lambda(a)\le v_p(F_a(P,P))$ since $F_a(P,P)\in|S|\langle\Lambda_a,\Lambda_{-a}\rangle$.

(ii) By the decomposition above,
$(\Lambda^\vee/\Lambda)\otimes\WO\cong\bigoplus_a\WO/p^{e_\Lambda(a)}$.

(iii) The intersection form is integral on $\mathrm N$, so
$\Lambda\subset\mathrm N\subset\mathrm N^\vee\subset\Lambda^\vee$. By (ii),
$q^m\Lambda^\vee\subset\Lambda\otimes\Z_{(p)}$. Since $\mathrm N/\Lambda$ is a finite
$p$-group \cite{Shimada2001}, $q^m\mathrm N\subset\Lambda$.

(iv) $2v_p[\mathrm N:\Lambda]=v_p\disc\Lambda-v_p\disc\mathrm N\le\sum_ae_\Lambda(a)$,
and there are $\beta_{2m}(X)-1$ primitive characters.
\end{proof}

\begin{remark}
\label{rem:gentrace}
The trace identity of Proposition~\ref{prop:gentrace} gives only the upper bound
$e_\Lambda(a)\le\nu m$ of Corollary~\ref{cor:gentrace}(i). Lower bounds for
$e_\Lambda(a)$ are of interest because they bear on the open question of when
$\Lambda=\mathrm N^m(X)$; we do not treat them here.
\end{remark}

\section{Varieties finitely covered by Hermitian varieties}
\label{sec:fermat}

The main theorem propagates, with no further computation, to any
variety receiving a finite surjective map from a Hermitian variety.
Supersingular Fermat varieties of even dimension are examples
(\S\ref{subsec:fermatcase}). For the Tate conjecture this reduction to the
Hermitian case goes back to Tate \cite[p.~102]{Tate1965}. Along finite surjective morphisms of degree prime
to $p$, the $p$-adic bounds of \S\ref{sec:gentrace} descend as well
(\S\ref{subsec:pdescent}).

\subsection{Descent along a finite surjective morphism}

We show that the standard conjecture of Hodge type descends along a
finite surjective morphism (Lemma~\ref{lem:descent}), and that on a
Hermitian variety it holds for every polarization, not only for the
hyperplane class treated in Section~\ref{sec:hodge}
(Lemmas~\ref{lem:scaling} and~\ref{lem:allpol}); together these give
Theorem~\ref{cor:quotient}.

\begin{lemma}[Hdg descends]
\label{lem:descent}
Let $\pi:Y\to Z$ be a finite surjective morphism of nonsingular
projective varieties over an algebraically
closed field, and let $L_Z$ be an ample class on $Z$. If
$\mathrm{Hdg}(Y)$ holds with respect to the class $\pi^{*}L_Z$, then
$\mathrm{Hdg}(Z)$ holds with respect to $L_Z$.
\end{lemma}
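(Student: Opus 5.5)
The plan is the standard projection-formula argument. Let $n=\dim Y=\dim Z$ (finite surjective forces equal dimensions), $\delta=\deg\pi>0$, and $L_Y=\pi^*L_Z$. Note $L_Y$ is ample, since the pullback of an ample class under a finite morphism is ample, so $\mathrm{Hdg}(Y)$ with respect to $L_Y$ makes sense. Fix $2i\le n$ and take $x,y\in\mathcal A^i_{\mathrm{prim}}(Z)$ with respect to $L_Z$. The approach is to pull $x$ back to $Y$, apply $\mathrm{Hdg}(Y)$ there, and transfer the resulting positivity back to $Z$ by the projection formula.

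\textbf{Step 1: pullback preserves algebraicity and primitivity.} Pullback $\pi^*\colon H^{2i}(Z,\Ql(i))\to H^{2i}(Y,\Ql(i))$ is a ring homomorphism. It sends algebraic classes to algebraic classes, via flat pullback on a smooth target or Fulton's refined Gysin pullback with compatibility of cycle classes (Fact~\ref{fact:cycle}), so $\pi^*\mathcal A^i(Z)\subset\mathcal A^i(Y)$. Since $\pi^*(x\cupp L_Z^{\,n-2i+1})=\pi^*x\cupp L_Y^{\,n-2i+1}$, the class $\pi^*x$ is primitive for $L_Y$ whenever $x$ is primitive for $L_Z$. Hence $\pi^*\mathcal A^i_{\mathrm{prim}}(Z)\subset\mathcal A^i_{\mathrm{prim}}(Y)$.

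\textbf{Step 2: projection formula.} Using $\pi_*\pi^*=\delta\cdot\mathrm{id}$ on $H^{2n}$, we get
\[
\deg\bigl(\pi^*x\cupp\pi^*y\cupp L_Y^{\,n-2i}\bigr)=\deg\pi_*\pi^*\bigl(x\cupp y\cupp L_Z^{\,n-2i}\bigr)=\delta\,\deg\bigl(x\cupp y\cupp L_Z^{\,n-2i}\bigr).
\]
So the Hodge form on $Z$ is $\delta^{-1}$ times the pullback of the Hodge form on $Y$.

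\textbf{Step 3: injectivity and conclusion.} Injectivity of $\pi^*$ follows from $\pi_*\pi^*=\delta$ on all of cohomology, which holds for finite surjective morphisms of smooth projective varieties by the projection formula applied to $\pi_*[Y]=\delta[Z]$. Therefore, for $x\ne0$ in $\mathcal A^i_{\mathrm{prim}}(Z)$, the class $\pi^*x$ is a nonzero element of $\mathcal A^i_{\mathrm{prim}}(Y)$, and
\[
(-1)^i\deg\bigl(x\cupp x\cupp L_Z^{\,n-2i}\bigr)=\delta^{-1}(-1)^i\deg\bigl(\pi^*x\cupp\pi^*x\cupp L_Y^{\,n-2i}\bigr)>0
\]
by $\mathrm{Hdg}(Y)$. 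This gives $\mathrm{Hdg}(Z)$.

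The only delicate point is Step 1 together with the identity $\pi_*\pi^*=\delta$ in $\ell$-adic cohomology; both are standard (Milne VI, Fulton Ch.~19) and carry no real obstruction. Everything else is formal.
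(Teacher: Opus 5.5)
Your proposal is correct and follows the paper's proof essentially step for step. The paper likewise argues as follows: $\pi^{*}$ is injective because $\pi_*\pi^{*}=\deg(\pi)$; it preserves algebraic classes and $L$-primitivity; and by the projection formula it multiplies the Hodge-type form by the positive integer $\deg(\pi)$, after which positive definiteness restricts to the subspace.
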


\begin{proof}
Since $\pi$ is finite surjective, $\pi_*\pi^{*}=\deg(\pi)\cdot
\mathrm{id}$ on $H^{2i}(Z,\Ql(i))$ for every $i$, so
$\pi^{*}:H^{2i}(Z,\Ql(i))\to H^{2i}(Y,\Ql(i))$ is injective. It
carries algebraic cycle classes to algebraic cycle classes, hence
$\pi^{*}\mathcal A^{i}(Z)\subseteq\mathcal A^{i}(Y)$. It also carries
primitive classes to primitive classes: if $L_Z^{\,n-2i+1}x=0$ then
$(\pi^{*}L_Z)^{n-2i+1}\pi^{*}x=\pi^{*}(L_Z^{\,n-2i+1}x)=0$. Finally,
by the projection formula, for $x,y\in H^{2i}(Z,\Ql(i))$
\[
  \deg_Y\bigl(\pi^{*}x\cupp\pi^{*}y\cupp(\pi^{*}L_Z)^{\,n-2i}\bigr)
  \;=\;\deg(\pi)\cdot
  \deg_Z\bigl(x\cupp y\cupp L_Z^{\,n-2i}\bigr).
\]
So $\pi^{*}$ embeds $\mathcal A^{i}_{\mathrm{prim}}(Z,L_Z)$ into
$\mathcal A^{i}_{\mathrm{prim}}(Y,\pi^{*}L_Z)$ and multiplies the
Hodge-type form by the positive integer $\deg(\pi)$. A positive
definite form restricts to a positive definite form on any subspace.
\end{proof}

The hypothesis of Lemma~\ref{lem:descent} refers to the class
$\pi^{*}L_Z$, which need not be the class for which one happens to
know $\mathrm{Hdg}(Y)$. Two elementary observations remove this
difficulty for the varieties at hand.

\begin{lemma}[Scaling]
\label{lem:scaling}
Let $L$ be an ample class on a nonsingular projective $Y$ and let $e$
be a positive integer. Then $\mathrm{Hdg}(Y,L)$ and
$\mathrm{Hdg}(Y,eL)$ are equivalent.
\end{lemma}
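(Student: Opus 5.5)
The plan is to compare the two conjectures term by term and check that the scaling $L\mapsto eL$ alters neither the subspaces involved nor the sign of the form. Write $n=\dim Y$ and fix $i$ with $2i\le n$. Replacing $L$ by $eL$ changes two things in Conjecture~\ref{conj:hdg}: the primitive subspace and the bilinear form. The space $\mathcal A^{i}(Y)$ is the $\Q$-span of cycle classes and does not involve $L$ at all.

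First, the primitive part. Since cup product is bilinear,
\[
  (eL)^{\,n-2i+1}\cupp x\;=\;e^{\,n-2i+1}\,L^{\,n-2i+1}\cupp x .
\]
As $e^{\,n-2i+1}$ is a nonzero scalar in $\Ql$, the kernel of $(eL)^{\,n-2i+1}$ on $H^{2i}(Y,\Ql(i))$ equals the kernel of $L^{\,n-2i+1}$. Hence $\mathcal P^{2i}(Y)$ is the same subspace for both classes. Intersecting it with $\mathcal A^{i}(Y)$, we find that $\mathcal A^{i}_{\mathrm{prim}}(Y)$ is the same $\Q$-vector space for $L$ and for $eL$.

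Second, the form. For $x,y\in\mathcal A^{i}_{\mathrm{prim}}(Y)$,
\[
  (-1)^{i}\deg\bigl(x\cupp y\cupp(eL)^{\,n-2i}\bigr)
  \;=\;e^{\,n-2i}\,(-1)^{i}\deg\bigl(x\cupp y\cupp L^{\,n-2i}\bigr).
\]
So the form attached to $eL$ is the form attached to $L$ multiplied by $e^{\,n-2i}$, which is a positive rational number because $e$ is a positive integer (it is $1$ when $n=2i$). Multiplying a symmetric bilinear form on a $\Q$-vector space by a positive rational number preserves positive definiteness in both directions. This gives the equivalence of $\mathrm{Hdg}(Y,L)$ and $\mathrm{Hdg}(Y,eL)$, one degree $i$ at a time.

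I expect no real obstacle. The only point to check is that the rational structure $\mathcal A^{i}_{\mathrm{prim}}$ is literally unchanged, not merely isomorphic. This holds because it is defined as an intersection of subspaces that are themselves unchanged. I would also note that $eL$ is again ample, so the statement $\mathrm{Hdg}(Y,eL)$ makes sense.
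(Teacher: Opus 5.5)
Your proof is correct and is essentially the paper's argument: since $e^{\,n-2i+1}$ is a nonzero scalar, the kernels of $(eL)^{n-2i+1}$ and $L^{n-2i+1}$ coincide, so the primitive parts agree, and the Hodge-type form for $eL$ is $e^{\,n-2i}>0$ times the form for $L$. Your two extra checks, that $\mathcal A^{i}_{\mathrm{prim}}(Y)$ is literally the same $\Q$-space and that $eL$ is again ample, are harmless points that the paper leaves implicit.
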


\begin{proof}
$\ker\bigl((eL)^{k}\bigr)=\ker(L^{k})$ for every $k$, so the primitive
parts coincide; and the Hodge-type form for $eL$ is $e^{\,n-2i}>0$
times the one for $L$, so definiteness is unaffected.
\end{proof}

\begin{lemma}[Every polarization of a Hermitian variety is covered]
\label{lem:allpol}
Let $m\ge1$, let $q$ be a prime power, and put
$Y=\Herm(2m+1,q^{2})$. Then $\mathrm{Hdg}(Y,L')$ holds for
\emph{every} ample class $L'$ on $Y$.
\end{lemma}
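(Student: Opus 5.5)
The plan is to reduce every ample class to a positive multiple of the hyperplane class, and then apply Lemma~\ref{lem:scaling} together with the main theorem. Here $Y=\Herm(2m+1,q^2)$ has dimension $2m$, with $m\ge1$. Set $r=m+1\ge2$, so that $Y$ is the variety $X$ of Sections~\ref{sec:setup}--\ref{sec:hodge}.

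\textbf{Step 1: the ample class is a multiple of $L$.} By Proposition~\ref{prop:lefschetz}(i), $H^2(Y,\Ql(1))=\Ql\cdot L$, since $1\ne r-1$ when $m\ge1$. The degenerate case $m=1$, $r=2$, gives a surface, where $2=\dim Y$ is the middle degree; this needs separate care and is discussed below.

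For $m\ge2$, let $L'$ be any ample class. Then $L'=\lambda L$ with $\lambda\in\Ql$. Comparing degrees on a line contained in $Y$ (for instance a line in a generator), or on $L^{2m-1}$, gives $\lambda=\deg(L'\cdot L^{2m-1})/(q+1)\in\Q$. Ampleness makes this degree positive, so $\lambda>0$. Write $\lambda=e_1/e_2$ with positive integers $e_1,e_2$. Then $e_2L'=e_1L$.

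Applying Lemma~\ref{lem:scaling} twice, we get
\[
\mathrm{Hdg}(Y,L')\iff\mathrm{Hdg}(Y,e_2L')=\mathrm{Hdg}(Y,e_1L)\iff\mathrm{Hdg}(Y,L).
\]
Here $\mathrm{Hdg}(Y,L)$ means Conjecture~\ref{conj:hdg} in all degrees $2i\le 2m$. Degrees $2i<2m$ are covered by Proposition~\ref{prop:belowmiddle}(i). The middle degree $i=m$ is Corollary~\ref{cor:std}, using Proposition~\ref{lem:Qstructure}, which identifies $\mathcal A^m_{\mathrm{prim}}(Y)$ with $\mathcal P^{2m}(Y)_\Q$.

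\textbf{Step 2: the surface case $m=1$.} Here $H^2$ is the middle cohomology and has large rank, so $L'$ need not be proportional to $L$. Two arguments are available.

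The first is to invoke the Hodge index theorem for surfaces \cite[Thm.~V.1.9]{Hartshorne}, which gives $\mathrm{Hdg}$ for any ample class. In the introduction this is listed as a known case.

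The second avoids citing it. In middle degree $n-2i=0$, the form $(-1)^m\langle x,y\rangle$ does not involve $L'$. Only the primitive subspace changes: it becomes $(L')^{\perp}\cap\mathcal A^1(Y)$ instead of $L^{\perp}$. We know that $-\langle\,,\,\rangle$ is positive definite on $L^\perp$, of codimension one, and that $\langle L,L\rangle>0$. So the form has signature $(1,\beta-1)$ on $\mathcal A^1(Y)$. Since $\langle L',L'\rangle>0$, the complement $(L')^\perp$ is negative definite by Sylvester's law.

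This second argument works in general for the middle degree whenever $n=2i$. I would use it in place of Step 1 when $m=1$.

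\textbf{Expected obstacle.} I expect the main subtlety to be exactly this distinction. When $m\ge2$, proportionality of $L'$ to $L$ rests on $H^2$ being one-dimensional; when $m=1$ it fails, and the signature argument is needed instead. The rationality and positivity of $\lambda$ in Step 1 is routine.
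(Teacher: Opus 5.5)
Your proof is correct. For $m=1$ your first option is the paper's argument. For $m\ge2$ you use a different key input.

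The paper invokes the Grothendieck--Lefschetz theorem \cite[Exp.~XII, Cor.~3.7]{SGA2} to get $\mathrm{Pic}(Y)\cong\Z L$. So every ample class is literally $eL$ with $e\in\Z_{\ge1}$, and Lemma~\ref{lem:scaling} is applied once.

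You instead use only the cohomological statement $H^2(Y,\Ql(1))=\Ql L$ of Proposition~\ref{prop:lefschetz}(i), which comes from weak Lefschetz and is already proved in the paper. This suffices because $\mathrm{Hdg}(Y,L')$ depends only on the class of $L'$ in $H^2$. The price is the extra step showing the proportionality constant $\lambda$ is a positive rational. Your pairing with a line $\ell$ in a generator in fact gives $\lambda=\deg(L'|_{\ell})\in\Z_{>0}$, so one application of Lemma~\ref{lem:scaling} is enough. Your route is more self-contained; the paper's gives the stronger statement in $\mathrm{Pic}$.

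For $m=1$, your second option is also valid: you deduce the middle-degree statement for $L'$ from Corollary~\ref{cor:std} for $L$ by Sylvester's law of inertia on $\mathcal A^1(Y)$. It replaces the citation of the Hodge index theorem with the paper's own result, since on a surface $\ker(L'\colon H^2\to H^4)=(L')^{\perp}$. Two small points on that option:
\begin{itemize}
\item add the trivial degree-$0$ check $\deg(L'^{\,2})>0$;
\item your aside that the argument works ``whenever $n=2i$'' is right for $Y$, because $H^{n+2}$ is one-dimensional there so the primitive part is $(L'^{\,i})^{\perp}$, but not for arbitrary varieties.
\end{itemize}
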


\begin{proof}
Suppose first $m\ge2$, so $\dim Y=2m\ge4$, in
particular $\ge3$. By the
Grothendieck--Lefschetz theorem for Picard groups
\textup{\cite[Exp.~XII, Cor.~3.7]{SGA2}}, a nonsingular hypersurface
of dimension $\ge3$ in $\mathbb P^{n+1}$ over an algebraically closed
field has $\mathrm{Pic}\cong\Z$, generated by the hyperplane class
$L$. Hence every ample class on $Y$ is $eL$ for some integer $e\ge1$.
Corollary~\ref{cor:std} gives $\mathrm{Hdg}(Y,L)$ in the middle degree
and Proposition~\ref{prop:belowmiddle}(i) gives it in every degree below
the middle, so $\mathrm{Hdg}(Y,L)$ holds; Lemma~\ref{lem:scaling} then
gives $\mathrm{Hdg}(Y,eL)$.

If $m=1$ then $Y$ is a nonsingular projective surface, and for a
surface the standard conjecture of Hodge type with respect to any
ample class is the Hodge index theorem
\textup{\cite[Thm.~V.1.9]{Hartshorne}}, which holds unconditionally.
\end{proof}

\begin{theorem}[Finite covers by Hermitian varieties]
\label{cor:quotient}
Let $m\ge1$, let $q$ be a prime power, and let $Z$ be a nonsingular
projective variety of dimension $2m$ over $\overline{\F}_p$ admitting
a finite surjective morphism
\[
  \pi\;:\;\Herm(2m+1,q^{2})\;\longrightarrow\;Z .
\]
Then the standard conjecture of Hodge type holds for $Z$, with respect
to every ample class on $Z$. In particular this applies whenever
$Z=\Herm(2m+1,q^{2})/G$ is a nonsingular quotient by a finite group
$G$ of automorphisms.
\end{theorem}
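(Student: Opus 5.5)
The plan is to combine Lemma~\ref{lem:descent} with Lemma~\ref{lem:allpol}. Put $Y=\Herm(2m+1,q^{2})$ and fix an ample class $L_Z$ on $Z$. Since $\pi$ is finite, the pullback $\pi^{*}L_Z$ of an ample line bundle along a finite morphism is again ample; this is the standard fact that ampleness is preserved by finite pullback, for instance via Serre's cohomological criterion together with $\pi_*$ being exact on coherent sheaves. So $L':=\pi^{*}L_Z$ is an ample class on $Y$. Lemma~\ref{lem:allpol} then gives $\mathrm{Hdg}(Y,L')$, in all degrees $2i\le 2m$. Lemma~\ref{lem:descent} applies verbatim to $\pi$ and gives $\mathrm{Hdg}(Z,L_Z)$. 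Since $L_Z$ was arbitrary, the conjecture holds for $Z$ with respect to every ample class.

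Two points need checking. First, the hypotheses of Lemma~\ref{lem:descent} require $Y$ and $Z$ to be nonsingular projective. $Y$ is smooth, as shown in \S\ref{subsec:generators}, and $Z$ is assumed nonsingular projective. Since $\pi$ is finite and surjective, $\dim Y=\dim Z=2m$, which matches the degree range of Conjecture~\ref{conj:hdg}. Second, Lemma~\ref{lem:allpol} covers \emph{every} ample class, and this is exactly what removes the mismatch between $\pi^{*}L_Z$ and the hyperplane class $L$ on $Y$. For $m\ge2$ the Grothendieck--Lefschetz theorem forces $\pi^{*}L_Z=eL$ with $e\ge1$. For $m=1$ the Hodge index theorem applies.

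For the final sentence, let $G$ be a finite group of automorphisms of $Y$ such that the quotient $Z=Y/G$ is nonsingular. The quotient map $Y\to Y/G$ is finite and surjective, and $Y/G$ is projective because $Y$ is projective and $G$ is finite, so every orbit lies in an affine open set. Hence the first part applies.

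I expect no real obstacle. The only nonformal input is that finite pullback preserves ampleness, and I would quote this rather than reprove it.
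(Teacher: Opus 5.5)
Your proposal is correct and follows the paper's proof. The paper argues in the same way: $\pi^{*}L_Z$ is ample because $\pi$ is finite, Lemma~\ref{lem:allpol} gives $\mathrm{Hdg}$ for the Hermitian variety with respect to $\pi^{*}L_Z$, and Lemma~\ref{lem:descent} descends this to $\mathrm{Hdg}(Z,L_Z)$, with the quotient case reduced to the first part because the quotient map is finite and surjective.
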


\begin{proof}
Let $L_Z$ be ample on $Z$. Since $\pi$ is finite, $\pi^{*}L_Z$ is
ample on $\Herm(2m+1,q^{2})$, so Lemma~\ref{lem:allpol} supplies the
hypothesis of Lemma~\ref{lem:descent}, which gives
$\mathrm{Hdg}(Z,L_Z)$. For the last sentence, the quotient morphism
onto a nonsingular quotient is finite and surjective.
\end{proof}

\subsection{Supersingular Fermat varieties}
\label{subsec:fermatcase}

Supersingular Fermat varieties of even dimension are finite quotients
of Hermitian varieties, so Theorem~\ref{cor:quotient} applies to them.
We first fix the terminology and recall which Fermat varieties are
involved; neither is used anywhere before this point.

\subsubsection*{Supersingularity: which definition}
\label{subsec:ss}

\begin{definition}[Shioda supersingularity;
{\cite[Def.~2.8]{SK1979}}]
\label{def:ss}
A nonsingular projective variety $Y$ of even dimension $n$ over an
algebraically closed field is \emph{supersingular} if $H^{2i}(Y,\Ql(i))$
is spanned by classes of algebraic cycles for every $i$ with
$0\le i\le n$. 
\end{definition}

\subsubsection*{Which Fermat varieties are reached}
\label{subsec:fermat}

For the Fermat varieties $X_N^{n}\subset\mathbb P^{n+1}$ over $\overline{\F}_p$ with $p\nmid N$, the Tate
conjecture is a classical theorem in the following case.

\begin{theorem}[Shioda--Katsura {\cite[Thms.~2.10, 4.1]{SK1979}}; see also \cite{Shioda1979}]
\label{thm:sk}
If there exists $\nu\ge1$ with $p^{\nu}\equiv-1\pmod N$, then the
Fermat hypersurface of degree $N$ and even dimension
is unirational, its primitive
$\ell$-adic cohomology is entirely spanned by algebraic cycle classes,
and in particular the Tate conjecture holds for it unconditionally.
\end{theorem}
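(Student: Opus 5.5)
We will not reprove unirationality from scratch. Instead we deduce the cohomological assertions from the results of this paper, with $\Herm(2m+1,q^2)$ playing the central role. Fix $\nu\ge1$ with $p^{\nu}\equiv-1\pmod N$ and put $q:=p^{\nu}$, so that $N\mid q+1$, and set $e:=(q+1)/N$. The morphism
\[
  \pi\colon\Herm(2m+1,q^{2})\longrightarrow X_N^{2m},\qquad [x_i]\longmapsto[x_i^{\,e}],
\]
is well defined because $\sum x_i^{q+1}=\sum(x_i^{e})^{N}$. It is finite and surjective, being the quotient by $\mu_e^{2m+2}/\mu_e$. It is defined over $\F_p$, hence over $\F_{q^2}$. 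Since $p\nmid N$, its degree $e^{2m+1}$ is prime to $p$, although this will not be needed below.

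\emph{Step 1: spanning.} Since $\pi$ is finite and surjective, $\pi_*\pi^*=\deg(\pi)\cdot\mathrm{id}$ on $H^{2k}(X_N^{2m},\Ql(k))$, so $\pi_*\colon H^{2k}(\Herm,\Ql(k))\to H^{2k}(X_N^{2m},\Ql(k))$ is surjective. The map $\pi_*$ sends cycle classes to cycle classes, namely $\pi_*[P]=[\pi_*P]$. By Theorem~\ref{thm:tate}, and more precisely Theorem~\ref{thm:decomp}(i) in the middle degree and Proposition~\ref{prop:lefschetz}(i) elsewhere, every $H^{2k}(\Herm,\Ql(k))$ is spanned by algebraic classes. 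Hence so is every $H^{2k}(X_N^{2m},\Ql(k))$, and in particular its primitive part. Thus $X_N^{2m}$ is supersingular in the sense of Definition~\ref{def:ss}.

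\emph{Step 2: Tate.} Every generator $P$ is defined over $\F_{q^2}$ (Lemma~\ref{lem:contained}(ii)), and $\pi$ is defined over $\F_{q^2}$, so each cycle $\pi_*P$ is defined over $\F_{q^2}$. Therefore $\Frob_{q^2}$ fixes a spanning set of $H^{2k}(X_N^{2m},\Ql(k))$. It follows that over $\F_{q^{2}}$, and over any finite extension of it, both sides of Conjecture~\ref{conj:tate} are the whole group, so the conjecture holds. Now suppose $X_N^{2m}$ is considered over a smaller field $\F_{q_0}$ with $\F_{q_0}\subset\F_{q^2}$. Let $\sigma$ generate $\mathrm{Gal}(\F_{q^2}/\F_{q_0})$. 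For a class $x$ fixed by $\Frob_{q_0}$, write $x$ as a $\Ql$-combination of classes of cycles $Z_j$ defined over $\F_{q^2}$. Averaging that expression over the Galois orbit expresses $x$ through the classes of the $\F_{q_0}$-rational cycles $\sum_{\sigma}\sigma(Z_j)$. This averaging is the only point needing care, and it is routine.

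\emph{Step 3: unirationality, the main obstacle.} This part does not follow from the lattice computations of this paper. Since $X_N^{2m}$ is dominated by $\Herm(2m+1,q^2)$ via $\pi$, it suffices to show that the Hermitian (degree $q+1$ Fermat) hypersurface is unirational. I would follow Shioda--Katsura. Take a line, or more generally a linear subspace, inside $\Herm$ defined by pairing coordinates $x_{2i+1}=\zeta x_{2i}$ with $\zeta^{q+1}=-1$. Then use the resulting fibration together with the inductive structure of Fermat varieties: $X_N^{n}$ is birationally built from $X_N^{r}\times X_N^{s}$ with $r+s=n$. The aim is to reduce to curves and surfaces, where a purely inseparable (Frobenius-twisted) parametrization exists because the exponent is $q+1=p^{\nu}+1$. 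Carrying out this inductive parametrization is the genuinely non-formal step, and for it I would simply invoke \cite[Thms.~2.10, 4.1]{SK1979}. Once unirationality is known, it gives an independent proof of the spanning statement via Tate/Shioda, but Steps 1--2 already establish that statement without it.
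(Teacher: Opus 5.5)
Your proposal is correct, but it is not the paper's route. The paper gives no proof of Theorem~\ref{thm:sk}: all three assertions (unirationality, spanning, Tate) are simply imported from \cite{SK1979}. You split the statement instead. For the cohomological part you carry out Tate's reduction to the Hermitian case, which the paper mentions at the start of \S\ref{sec:fermat} but does not carry out for this statement. Three ingredients go in: spanning on $\Herm(2m+1,q^2)$ from Theorem~\ref{thm:tate}, the covering $\pi$ of Lemma~\ref{lem:cover}, and the identity $\pi_*\pi^*=\deg\pi$. Together they show that $H^{2k}(X_N^{2m},\Ql(k))$ is spanned by the $\F_{q^2}$-rational cycles $\pi_*P$, and the Tate conjecture then follows by Galois averaging.

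What this buys is a proof of the spanning and Tate parts that rests only on the paper's elementary dual-polar-graph computation, with no appeal to unirationality. It also exhibits explicit spanning cycles, namely the classes $\pi_*[P]$ that generate $\Lambda_Z$ in Proposition~\ref{prop:pdescent}. What it cannot reach is unirationality, a genuinely geometric statement; there you, like the paper, rely on \cite{SK1979}.

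Some small points, none of them load-bearing:
\begin{itemize}
\item $p\nmid e$ holds because $e\mid q+1\equiv1\pmod p$, not because $p\nmid N$.
\item In Step~2, a base field $\F_{q_0}$ that neither contains nor is contained in $\F_{q^2}$ is handled by the same averaging over $\mathrm{Gal}(\F_{q_0}\F_{q^2}/\F_{q_0})$.
\item Your Step~3 sketch cannot pass through curves, since Fermat curves of degree $N\ge3$ have positive genus and are not unirational.
\item Your closing remark, that unirationality by itself reproves the spanning, is only immediate for surfaces. A complex cubic fourfold is unirational but has $h^{3,1}=1$. This is harmless, because Steps~1--2 do not use it.
\end{itemize}
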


The Hermitian varieties of
\S\S\ref{sec:setup}--\ref{sec:hodge} are precisely the \emph{extremal}
case $N=p^{\nu}+1$ of this criterion, and the general case is deduced
from them below.

\begin{lemma}
\label{lem:cover}
Let $N\ge2$ with $p\nmid N$, let $q=p^{\nu}$ be such that $N\mid q+1$,
and set $e=(q+1)/N$. Let $\mu_e^{2m+2}/\mu_e$ act on
$\mathbb P^{2m+1}$ by $[x_i]\mapsto[\zeta_ix_i]$ with
$\zeta_i^{\,e}=1$. Then this action preserves $\Herm(2m+1,q^{2})$, and
the quotient map is
\[
  \pi\;:\;\Herm(2m+1,q^{2})\;\longrightarrow\;X_N^{2m},
  \qquad
  [x_0:\cdots:x_{2m+1}]\;\longmapsto\;[x_0^{\,e}:\cdots:x_{2m+1}^{\,e}] ,
\]
a finite surjective morphism of nonsingular projective varieties of
dimension $2m$, with $\pi^{*}\mathcal O_{X_N}(1)=\mathcal
O_{\Herm}(e)$ and $\deg(\pi)=e^{\,2m+1}$. In particular $X_N^{2m}$ is
a nonsingular finite quotient of $\Herm(2m+1,q^{2})$.
\end{lemma}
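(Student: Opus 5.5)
We verify the claims of Lemma~\ref{lem:cover} directly from the equations. \emph{Well-definedness and invariance.} For $\zeta\in\mu_e^{2m+2}$ we have $\zeta_i^{q+1}=(\zeta_i^{e})^{N}=1$, so $\sum(\zeta_ix_i)^{q+1}=\sum x_i^{q+1}$; hence the action preserves $\Herm(2m+1,q^2)$. The diagonal $\mu_e$ acts trivially on $\mathbb P^{2m+1}$, so the group $\mu_e^{2m+2}/\mu_e$ acts, and it does so faithfully. The map $x\mapsto x^{e}$ (coordinatewise) is a morphism of $\mathbb P^{2m+1}$ to itself, since the $x_i^{e}$ have no common zero. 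It sends $\Herm$ into $X_N^{2m}$ because $\sum (x_i^{e})^{N}=\sum x_i^{q+1}$. Moreover $\Herm=\pi^{-1}(X_N^{2m})$ scheme-theoretically, as the pullback of the equation of $X_N^{2m}$ is exactly the Hermitian equation.

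\emph{Finite and surjective, and $\pi^*\mathcal O(1)=\mathcal O(e)$.} The ambient power map $\mathbb P^{2m+1}\to\mathbb P^{2m+1}$ is finite: it is the morphism given by the base-point-free linear system spanned by the $x_i^{e}$, and the pullback of $\mathcal O(1)$ is $\mathcal O(e)$, which is ample. It is also surjective, since $e$-th roots exist over $K$. Restricting over $X_N^{2m}$ gives a finite surjective $\pi$ with $\pi^*\mathcal O_{X_N}(1)=\mathcal O_{\Herm}(e)$. Both varieties are nonsingular: $\Herm$ by \S\ref{subsec:generators}, and $X_N^{2m}$ because $p\nmid N$, so the partials $Nx_i^{N-1}$ have no common zero.

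\emph{Quotient identification.} The fibres of the ambient power map over a point $[y]$ are the sets $\{[x]:x_i^{e}=\lambda y_i\}$. Here $\lambda$ ranges over $K^\times$ modulo $\lambda\sim\lambda\mu^{e}$, that is, scaling $x$ by $\mu$. Fix a choice of $\lambda$; the solutions then form a torsor under $\mu_e^{2m+2}$ (with $p\nmid e$, since $e\mid q+1$). Modulo scaling, the fibre is a single orbit of $\mu_e^{2m+2}/\mu_e$. Thus $\pi$ is constant on orbits and separates them, and $\pi$ is a finite $G$-invariant morphism, with $G=\mu_e^{2m+2}/\mu_e$ and $|G|=e^{2m+1}$. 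It then identifies $X_N^{2m}$ with $\Herm/G$, provided $X_N^{2m}$ is normal (it is smooth) and $\pi$ has degree $|G|$. The universal property of the quotient gives a finite birational map $\Herm/G\to X_N^{2m}$ onto a normal variety, which is an isomorphism by Zariski's main theorem.

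\emph{Degree.} The one point needing care is that $\deg\pi=e^{2m+1}$. We compute it by intersection numbers, using the projection formula. On one side, $(\pi^*H)^{2m}=e^{2m}\deg\Herm=e^{2m}(q+1)$. On the other, $\deg(\pi)\cdot H^{2m}\cdot[X_N]=\deg(\pi)\,N$. This gives $\deg\pi=e^{2m}(q+1)/N=e^{2m+1}$, which matches $|G|$. Equivalently, a general fibre is a free $G$-orbit: over a point with all $y_i\neq0$ the stabilizer is trivial, and the extension is separable since $p\nmid e$. This confirms birationality of $\Herm/G\to X_N^{2m}$ and completes the proof. The main subtlety is precisely this separability/degree check, which is handled by $p\nmid e$ together with the intersection count.
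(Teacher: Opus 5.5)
Your proof is correct. For finiteness, for $\pi^{*}\mathcal O(1)=\mathcal O(e)$ and for the degree (the projection formula giving $\deg(\pi)\,N=e^{2m}(q+1)$) it is the paper's argument. It departs from the paper in two places.

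For surjectivity, the paper uses that the image is closed of dimension $2m$ in the irreducible $X_N^{2m}$. You instead observe that $\Herm(2m+1,q^2)$ is the full scheme-theoretic preimage of $X_N^{2m}$ under the surjective ambient power map. Either works.

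The more substantial difference is the quotient identification. The paper does it in one line by noting that the invariants are generated by the $x_i^{e}$. This implicitly uses that taking invariants is exact because $|G|$ is prime to $p$, so the invariant equation $\sum x_i^{q+1}=0$ descends to $\sum y_i^{N}=0$. You instead show three things: the fibres of $\pi$ are exactly $G$-orbits; the intersection-number computation of $\deg\pi$ makes $\Herm(2m+1,q^2)/G\to X_N^{2m}$ finite of degree $\deg\pi/|G|=1$; and normality of $X_N^{2m}$ with Zariski's main theorem then gives an isomorphism. The paper's route is shorter and scheme-theoretic from the start. Yours makes the degree an essential input rather than a by-product, and shows explicitly why bijectivity on points alone would not suffice in characteristic $p$.

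Two small points. When some $y_i=0$, the solutions of $x_i^{e}=\lambda y_i$ form a single $\mu_e^{2m+2}$-orbit but not a torsor. This is harmless, since only transitivity is used. Also, $\deg\bigl(\Herm(2m+1,q^2)\to\Herm(2m+1,q^2)/G\bigr)=|G|$ needs $G$ to act faithfully on $\Herm(2m+1,q^2)$ itself, not merely on $\mathbb P^{2m+1}$. That follows from your own remark that the orbit over a point with all $y_i\ne0$ is free.
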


\begin{proof}
Since $e\mid q+1$, we have $\zeta_i^{\,q+1}=1$, so the action
preserves $\sum_ix_i^{\,q+1}$ and hence $\Herm(2m+1,q^{2})$; the
invariants of $\mu_e^{2m+2}/\mu_e$ on $\mathbb P^{2m+1}$ are generated
by the $x_i^{\,e}$, so the quotient map is $[x_i]\mapsto[x_i^{\,e}]$.

This assignment is in any case a morphism
$\mathbb P^{2m+1}\to\mathbb P^{2m+1}$, since the $x_i^{\,e}$ have no
common zero, and it is finite. If $\sum_ix_i^{\,q+1}=0$ then
$\sum_i(x_i^{\,e})^{N}=\sum_ix_i^{\,eN}=\sum_ix_i^{\,q+1}=0$, so it
maps $\Herm(2m+1,q^{2})$ into $X_N^{2m}$. Both are nonsingular of
dimension $2m$ (here $p\nmid N$ and $p\nmid q+1$), and $X_N^{2m}$ is
irreducible, so the image, being closed of dimension $2m$, is all of
$X_N^{2m}$. The map is given by forms of degree $e$, whence
$\pi^{*}\mathcal O(1)=\mathcal O(e)$, and the projection formula gives
$\deg(\pi)\cdot N=\deg(\pi)\,\deg(L_Z^{2m})=\deg\bigl((\pi^{*}L_Z)^{2m}\bigr)
=e^{2m}\deg(L_Y^{2m})=e^{2m}(q+1)$, so $\deg(\pi)=e^{2m}(q+1)/N=e^{2m+1}$.
\end{proof}

\begin{corollary}[Standard conjecture of Hodge type for supersingular
Fermat varieties]
\label{thm:fermat}
Let $p$ be a prime, $m\ge1$, and let $N\ge2$ satisfy
\[
  p^{\nu}\equiv-1 \pmod N \qquad\text{for some }\nu\ge1 .
\]
Then the standard conjecture of Hodge type holds for the nonsingular
Fermat variety $X_N^{2m}\subset\mathbb P^{2m+1}$ over
$\overline{\F}_p$.
\end{corollary}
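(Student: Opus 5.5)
The plan is to reduce to Theorem~\ref{cor:quotient} by way of Lemma~\ref{lem:cover}. The first step is to find a prime power $q=p^{\nu}$ with $N\mid q+1$. The hypothesis $p^{\nu}\equiv-1\pmod N$ gives exactly this, with $q:=p^{\nu}$ and $\nu\ge1$. We must also check that $p\nmid N$: if $p\mid N$, then $p\mid p^{\nu}+1$, so $p\mid 1$, which is absurd. The Fermat variety $X_N^{2m}$ is therefore nonsingular over $\overline{\F}_p$, and the standing hypothesis of Lemma~\ref{lem:cover} holds. Set $e=(q+1)/N$, a positive integer.

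The second step applies Lemma~\ref{lem:cover}. It gives a finite surjective morphism
\[
\pi\colon\Herm(2m+1,q^{2})\to X_N^{2m}
\]
between nonsingular projective varieties of dimension $2m$. Note that $e=1$ is allowed, namely when $N=q+1$. In that case $\pi$ is the identity and $X_N^{2m}$ is itself a Hermitian variety, so no separate case is needed.

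The third step invokes Theorem~\ref{cor:quotient} with $Z=X_N^{2m}$. This yields the standard conjecture of Hodge type for $X_N^{2m}$, with respect to every ample class and in particular the hyperplane class. Unwinding, this is Lemma~\ref{lem:descent} applied to $\pi^*\mathcal O(1)=\mathcal O_{\Herm}(e)$. That class is handled by Lemma~\ref{lem:allpol}, or directly by Lemma~\ref{lem:scaling} together with Corollary~\ref{cor:std} and Proposition~\ref{prop:belowmiddle}(i). So we do not even need Grothendieck--Lefschetz here.

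There is no real obstacle. The only point needing care is the observation $p\nmid N$, which the corollary's statement leaves implicit and which is required both for nonsingularity and for Lemma~\ref{lem:cover}. One should also make sure the case $m=1$ is covered. For surfaces Lemma~\ref{lem:allpol} uses the Hodge index theorem, and in any case that theorem holds for the surface $X_N^2$ directly.
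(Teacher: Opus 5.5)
Your proposal is correct and follows the paper's proof exactly: set $q=p^{\nu}$, note that $N\mid q+1$ and $p\nmid N$, take the finite surjective morphism $\Herm(2m+1,q^{2})\to X_N^{2m}$ of Lemma~\ref{lem:cover}, and apply Theorem~\ref{cor:quotient}. Your aside that Grothendieck--Lefschetz can be bypassed is accurate for the hyperplane class you unwind, where $\pi^{*}\mathcal O(1)=\mathcal O_{\Herm}(e)$ and Lemma~\ref{lem:scaling} suffices.
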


\begin{proof}
Set $q=p^{\nu}$, so that $N\mid q+1$ and $p\nmid N$. By
Lemma~\ref{lem:cover} there is a finite surjective morphism
$\Herm(2m+1,q^{2})\to X_N^{2m}$ of nonsingular projective varieties of
dimension $2m$, so Theorem~\ref{cor:quotient} applies.
\end{proof}

\subsection{$p$-adic descent}
\label{subsec:pdescent}

Let $X=\Herm(2m+1,q^2)$, $q=p^\nu$, let $\Lambda\subset\mathrm N^m(X)$ be the lattice of
generators (\S\ref{sec:gentrace}), and let $\pi\colon X\to Z$ be a finite surjective
morphism onto a nonsingular projective variety $Z$. Such a $\pi$ is flat, so
$\pi^*$ and $\pi_*$ are defined on cycles modulo numerical equivalence. They
satisfy $\pi_*\pi^*=\deg\pi$ and the projection formula
$\langle\pi^*x,y\rangle_X=\langle x,\pi_*y\rangle_Z$. Let
\[
  \Lambda_Z=\bigl\langle\pi_*[P]:P\in\mathcal G\bigr\rangle\subset\mathrm N^m(Z).
\]

\begin{proposition}
\label{prop:pdescent}
\begin{enumerate}
\item[\textup{(i)}] $q^m\deg(\pi)\cdot\mathrm N^m(Z)\subset\Lambda_Z$, and $\Lambda_Z^\vee/\Lambda_Z$ is
  annihilated by $q^m\deg(\pi)$. In particular, if $p\nmid\deg\pi$, then the
  $p$-part of $\Lambda_Z^\vee/\Lambda_Z$ is annihilated by $q^m$, and
  $q^m\,\mathrm N^m(Z)\subset\Lambda_Z\otimes\Z_{(p)}$.
\item[\textup{(ii)}] For every prime $\ell\ne p$ with $\ell\nmid\deg\pi$, the lattice
  $\Lambda_Z\otimes\Z_\ell=\mathrm N^m(Z)\otimes\Z_\ell$ is unimodular.
\end{enumerate}
\end{proposition}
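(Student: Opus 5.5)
The plan is to transport the inclusions of Corollary~\ref{cor:gentrace} along $\pi$ using only $\pi_*\pi^*=\deg\pi$ and the projection formula. Put $\delta=\deg\pi$.

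For the first inclusion of (i), let $z\in\mathrm N^m(Z)$. Then $\pi^*z\in\mathrm N^m(X)$, and we need to write $q^m\pi^*z$ as an integral combination of generators. Corollary~\ref{cor:gentrace}(iii) does this, giving $q^m\pi^*z=\sum_P n_P[P]$ with $n_P\in\Z$. Applying $\pi_*$ yields
\[
  q^m\delta\,z=\sum_P n_P\,\pi_*[P]\in\Lambda_Z .
\]

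For the discriminant statement, take $y\in\Lambda_Z^\vee$, so that $y$ lies in $\mathrm N^m(Z)\otimes\Q$ and pairs integrally with every $\pi_*[P]$. The projection formula gives $\langle\pi^*y,[P]\rangle_X=\langle y,\pi_*[P]\rangle_Z\in\Z$, hence $\pi^*y\in\Lambda^\vee$. Corollary~\ref{cor:gentrace}(ii),(iii) give $q^m\Lambda^\vee\subset\Lambda\otimes\Z_{(p)}$. This bound is only $p$-local, which is a subtlety to watch. The prime-to-$p$ part of $\Lambda^\vee/\Lambda$ must also be handled. I would argue that $\disc\Lambda$ is a power of $p$: by \cite{Shimada2001}, or from the elementary-divisor description of \S\ref{sec:gentrace} combined with the fact that $\mathrm N^m(X)$ is unimodular away from $p$ (Poincar\'e duality with $\Z_\ell$ coefficients and the spanning of Theorem~\ref{thm:decomp}). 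This gives $q^m\Lambda^\vee\subset\Lambda$ integrally. Applying $\pi_*$ to $q^m\pi^*y$ then gives $q^m\delta\,y\in\Lambda_Z$. The ``in particular'' clause follows by tensoring with $\Z_{(p)}$, where $\delta$ becomes a unit.

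For (ii), fix $\ell\ne p$ with $\ell\nmid\delta$. By (i), $\Lambda_Z\otimes\Z_\ell$ contains $q^m\delta\,\mathrm N^m(Z)\otimes\Z_\ell=\mathrm N^m(Z)\otimes\Z_\ell$, so the two coincide. It remains to show unimodularity. I would show that $\Lambda_Z^\vee\otimes\Z_\ell$ is contained in $\Lambda_Z\otimes\Z_\ell$, which is the $\ell$-part of the annihilation statement in (i): the annihilator $q^m\delta$ is an $\ell$-adic unit, so the $\ell$-part of $\Lambda_Z^\vee/\Lambda_Z$ vanishes. Thus the Gram matrix has unit determinant in $\Z_\ell$.

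The main obstacle is the passage from the $p$-local statement of Corollary~\ref{cor:gentrace} to the integral annihilation $q^m\Lambda^\vee\subset\Lambda$. This requires knowing that $\Lambda$ is unimodular at all $\ell\ne p$, equivalently that $\disc M'$-type contributions away from $p$ vanish. I would first try citing \cite{Shimada2001}, which states the discriminant is a power of $p$. If that fails, the fallback is to prove only the weaker claim that $\Lambda_Z^\vee/\Lambda_Z$ is annihilated by $q^m\delta$ times the prime-to-$p$ exponent of $\Lambda^\vee/\Lambda$. That weaker claim still yields both the $p$-local statement and (ii) whenever that exponent is trivial.
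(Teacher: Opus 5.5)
Your proposal is correct and follows the paper's proof. You pull back along $\pi^*$, use Corollary~\ref{cor:gentrace}(ii),(iii) made integral by Shimada's theorem that $\Lambda^\vee/\Lambda$ is a $p$-group, and push forward using $\pi_*\pi^*=\deg\pi$ and the projection formula; (ii) then follows because $q^m\deg\pi$ is an $\ell$-adic unit. Citing \cite{Shimada2001} is exactly what the paper does. Your suggested fallback would not suffice by itself: Poincar\'e duality with $\Z_\ell$-coefficients plus spanning over $\Ql$ does not show that the image of $\Lambda\otimes\Z_\ell$ is all of $H^{2m}(X,\Z_\ell(m))$, and the elementary-divisor description of \S\ref{sec:gentrace} is only $p$-adic.
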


\begin{proof}
By \cite[Thm.~1.1]{Shimada2001}, $\Lambda^\vee/\Lambda$ is a $p$-group. So
Corollary~\ref{cor:gentrace}(ii),(iii) give $q^m\Lambda^\vee\subset\Lambda$ and
$q^m\mathrm N^m(X)\subset\Lambda$, and $\Lambda\otimes\Z_\ell$ is unimodular for
$\ell\ne p$.

(i) Let $y\in\mathrm N^m(Z)$. Then $q^m\pi^*y\in\Lambda$, and applying $\pi_*$ gives
$q^m\deg(\pi)\,y\in\pi_*\Lambda=\Lambda_Z$. Let $x\in\Lambda_Z^\vee$. For
$\lambda\in\Lambda$ we have $\langle\pi^*x,\lambda\rangle_X=\langle x,\pi_*\lambda\rangle_Z\in\Z$, so
$\pi^*x\in\Lambda^\vee$. Hence $q^m\pi^*x\in\Lambda$ and $q^m\deg(\pi)\,x\in\Lambda_Z$.

(ii) Over $\Z_\ell$ the same argument, with $\Lambda^\vee\otimes\Z_\ell=\Lambda\otimes\Z_\ell$,
gives $\deg(\pi)(\Lambda_Z^\vee\otimes\Z_\ell)\subset\Lambda_Z\otimes\Z_\ell$. As
$\deg\pi$ is an $\ell$-adic unit, $\Lambda_Z\otimes\Z_\ell$ is unimodular. Since
$\Lambda_Z\subset\mathrm N^m(Z)\subset\mathrm N^m(Z)^\vee\subset\Lambda_Z^\vee$, all four
coincide $\ell$-adically.
\end{proof}

When $\pi$ is the quotient by a group of automorphisms that permutes the
generators, the descent is more precise.

\begin{corollary}
\label{cor:pdescent}
Let $T\subset\PGU_{2m+2}(q)$ be a finite subgroup such that $Z=X/T$ is
nonsingular, and let $\pi\colon X\to Z$ be the quotient map, so $\deg\pi=|T|$.
\begin{enumerate}
\item[\textup{(i)}] $\langle\pi_*P,\pi_*P'\rangle_Z=\sum_{t\in T}M_{P,tP'}$ for $P,P'\in\mathcal G$.
\item[\textup{(ii)}] If $p\nmid|T|$, then $\pi^*$ induces isomorphisms
  $\Lambda_Z\otimes\Z_p\cong(\Lambda\otimes\Z_p)^T$ and
  $\mathrm N^m(Z)\otimes\Z_p\cong(\mathrm N^m(X)\otimes\Z_p)^T$, which multiply the
  intersection form by the unit $|T|$.
\item[\textup{(iii)}] Let $L_Z$ be an ample class on $Z$ with $\pi^*L_Z\in\Q_{>0}L$ (this is
  automatic if $m\ge2$, and holds for the Fermat quotients below). Under $\pi^*$, the primitive part of $\mathcal A^m(Z)$ with respect to $L_Z$ is identified with
  the $T$-invariants of $\mathcal P^{2m}(X)_\Q\cong\Ur$. On it,
  $(-1)^m\langle\,,\rangle_Z$ is $|T|^{-1}q^mN_m$ times $\langle\,,\rangle_{\mathrm{std}}$.
\end{enumerate}
For $Z=X_N^{2m}$ with $N\mid q+1$, take $T=\mu_e^{2m+2}/\mu_e\subset S$ with
$e=(q+1)/N$ (Lemma~\ref{lem:cover}); here $|T|=e^{2m+1}$ is prime to $p$.
\end{corollary}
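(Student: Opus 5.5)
We prove the three parts of Corollary~\ref{cor:pdescent} in order. Throughout we use that $T\subset\PGU_{2m+2}(q)$ acts on $X$ by automorphisms, permutes $\mathcal G$, and preserves the Gram matrix: $M_{tP,tP'}=M_{P,P'}$, because $t$ is an isometry of the Hermitian form and so preserves $\dim(\widetilde P\cap\widetilde P')$.

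For (i), use the standard identity $\pi^*\pi_*y=\sum_{t\in T}t^*y$ for a quotient by a finite group. It holds on cycles when $\pi$ is flat of degree $|T|$ with generic fibre a $T$-torsor; pushing a cycle forward and pulling it back recovers the sum of its $T$-translates, with multiplicity one because the generic stabiliser is trivial. This requires $T$ to act faithfully, which is the case in $\PGU$. The projection formula then gives
\[
\langle\pi_*P,\pi_*P'\rangle_Z=\langle P,\pi^*\pi_*P'\rangle_X=\sum_{t\in T}\langle P,tP'\rangle_X=\sum_t M_{P,tP'} .
\]

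For (ii), assume $p\nmid|T|$. Then $\pi^*\pi_*=\sum_t t$ on $\mathrm N^m(X)\otimes\Z_p$, and $\pi_*\pi^*=|T|$ on $\mathrm N^m(Z)\otimes\Z_p$. Hence $\pi^*$ is injective into the $T$-invariants. It is also surjective onto them: for an invariant $x$ we have $x=|T|^{-1}\sum_t tx=\pi^*(|T|^{-1}\pi_*x)$, and $|T|$ is a $p$-adic unit. The same computation with $\Lambda$ in place of $\mathrm N^m(X)$ shows $\pi^*\Lambda_Z\otimes\Z_p=\pi^*\pi_*\Lambda\otimes\Z_p=(\sum_t t)(\Lambda\otimes\Z_p)=(\Lambda\otimes\Z_p)^T$, the last equality again because the averaging operator $|T|^{-1}\sum_t t$ is defined $p$-adically and is a projector onto the invariants. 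Finally, $\langle\pi^*x,\pi^*y\rangle_X=\langle x,\pi_*\pi^*y\rangle_Z=|T|\langle x,y\rangle_Z$, which is the stated scaling by the unit $|T|$.

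For (iii), take $L_Z$ with $\pi^*L_Z=cL$ for some $c\in\Q_{>0}$; when $m\ge2$ this is automatic from Lemma~\ref{lem:allpol}. Then $\pi^*$ carries $L_Z$-primitive classes into $\mathcal P^{2m}(X)$. Rationally $\pi^*$ is injective with image $\mathcal A^m(X)^T$, by the averaging argument over $\Q$. Primitivity is detected by pairing with $L^m$, which is $T$-invariant, so the image of $\mathcal A^m_{\mathrm{prim}}(Z)$ is exactly $\mathcal P^{2m}(X)_\Q^T$. By Lemma~\ref{lem:qform}, the map $\psi$ is $T$-equivariant, since $\phi$ is and $\Ur$ is $T$-stable because $A$ commutes with the permutation action of $T$; so $\mathcal P^{2m}(X)_\Q^T\cong\Ur^T$. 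Combining $\langle x,y\rangle_Z=|T|^{-1}\langle\pi^*x,\pi^*y\rangle_X$ with Theorem~\ref{thm:main}, applied with $r=m+1$ so that $\kappa=q^mN_m$, gives the factor $|T|^{-1}q^mN_m$.

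For the Fermat case, $T=\mu_e^{2m+2}/\mu_e$ embeds in the diagonal torus and hence in $\PGU$. The pullback $\pi^*\mathcal O(1)=\mathcal O(e)$ comes from Lemma~\ref{lem:cover}, which also gives $|T|=e^{2m+1}$; this is prime to $p$ since $e\mid q+1$. The main obstacle is the cycle-level identity $\pi^*\pi_*=\sum_t t$ on $\mathrm N^m$. To establish it, I would check it on cycle classes using flatness of $\pi$ and the fact that $X\times_ZX$ is the union of the graphs of the elements of $T$, with multiplicities all one since $T$ acts generically freely.
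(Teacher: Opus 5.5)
Your proposal is correct and follows essentially the paper's route: the identity $\pi^*\pi_*=\sum_{t\in T}t$ together with the projection formula gives (i) and the scaling by $|T|$; the $p$-adically defined averaging idempotent $|T|^{-1}\sum_t t$ gives (ii); and $T$-equivariance of $\psi$ combined with Theorem~\ref{thm:main} at $r=m+1$ gives (iii). The paper simply asserts $\pi^*\pi_*=\sum_t t_*$, so your sketch via $\pi^*\pi_*=p_{1*}p_2^*$ on $X\times_Z X$ goes further; note only that for a subvariety $V$ in the branch locus the components $\{(tv,v)\}$ of $X\times_Z V$ occur with multiplicity equal to the order of the subgroup fixing $V$ pointwise (forced by flatness of degree $|T|$), not one, which still yields $\sum_{t\in T}[tV]$.
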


\begin{proof}
Now $\pi^*\pi_*=\sum_{t\in T}t_*$, and $T$ permutes $\mathcal G$ isometrically.

(i) $\langle\pi_*P,\pi_*P'\rangle_Z=|T|^{-1}\sum_{t,t'}\langle tP,t'P'\rangle_X
=\sum_t\langle P,tP'\rangle_X$.

(ii) With $e_T=|T|^{-1}\sum_tt\in\Z_p[T]$ we get
$\pi^*(\Lambda_Z\otimes\Z_p)=\pi^*\pi_*(\Lambda\otimes\Z_p)=e_T(\Lambda\otimes\Z_p)
=(\Lambda\otimes\Z_p)^T$. Moreover $\pi^*$ is injective because $\pi_*\pi^*=|T|$ is a
unit. The same argument applies to $\mathrm N^m$.

(iii) The map $\pi^*$ preserves primitivity (Lemma~\ref{lem:descent}), and the
isomorphism $\psi$ of Theorem~\ref{thm:main-intro} is equivariant for the
automorphisms of $X$ preserving $\mathcal G$. Now use the projection formula.
\end{proof}

\begin{remark}
\label{rem:autPGU}
The group of linear automorphisms of $X$, i.e.\ of the projective
transformations of $\mathbb P^{2m+1}$ preserving $X$, is $\PGU_{2m+2}(q)$
\cite{Shioda1988}, \cite[Example~1]{Kontogeorgis2002}. If $(m,q)\ne(1,3)$, every
automorphism of $X$ is linear \cite{MM1964}, so $\mathrm{Aut}(X)=\PGU_{2m+2}(q)$.
\end{remark}

\begin{remark}
For primes $\ell\mid\deg\pi$ nothing is claimed: $\Lambda_Z\otimes\Z_\ell$ may be
smaller than $\mathrm N^m(Z)\otimes\Z_\ell$.
\end{remark}

\bigskip

\noindent{\bf Declaration:}
During the preparation of this work, the author made extensive use of Claude Opus 5 and Claude Opus 5.5 for formulating assertions, proving results, 
drafting the paper, refining the exposition, and correcting deficiencies in the writing. After using this tool, the author
reviewed and verified the content and takes full responsibility for
the correctness of the theorems and the content of the published
article.

\end{document}